\newtheorem{theorem}{Theorem}[section]
\newtheorem{proposition}[theorem]{Proposition}
\newtheorem{definition}[theorem]{Definition}
\newtheorem{lemma}[theorem]{Lemma}
\newtheorem{remark}[theorem]{Remark}
\newtheorem{conjecture}[theorem]{Conjecture}
\newtheorem{corollary}[theorem]{Corollary}
      \theoremstyle{plain}
      \theoremstyle{plain}
\def\FF{\mathbb{F}}
\def\KK{\mathbb{K}}
\def\NN{\mathbb{N}}
\def\QQ{\mathbb{Q}}
\def\ZZ{\mathbb{Z}}
\def\calo{\mathcal{O}}
\def\Frob{\mathrm{Frob}}
\def\Gal{\mathrm{Gal}}
\def\GL{\mathrm{GL}}
\def\PGL{\mathrm{PGL}}
\def\M{\mathrm{M}}
\def\SL{\mathrm{SL}}
\def\Qbar{\overline{\QQ}}
\newtheorem{assumptions}[theorem]{Assumption}
\newcommand{\Z}{{\mathbb{Z}}}
\newcommand{\Q}{{\mathbb{Q}}}
\newcommand{\Qb}{\overline{\mathbb{Q}}}
\newcommand{\im}{{\mathrm{im}}}
\newcommand{\Gm}{{\mathbb G}_{\rm m}}
\newcommand\B{{\rm (B)}}
\newcommand\I{{\rm I}}
\newcommand\No{N}
\renewcommand\O{{\mathcal O}} 
\newcommand{\mug}{{\boldsymbol\mu}}
\newcommand\e{{\varepsilon}}
\newcommand\mA{{\mathcal A}}
\newcommand\mB{{\mathcal B}}
\newtheorem*{Remark 2.2}{Remark 2.2}
\newtheorem*{theorem*}{Theorem}
\newtheorem*{remark*}{Remark}
\newtheorem*{fact*}{Fact} 
\newcommand{\stkout}[1]{\ifmmode\text{\sout{\ensuremath{#1}}}\else\sout{#1}\fi}
\newcommand\K{{\mathbb{K}}}
\newcommand\F{{\mathbb{F}}}
\renewcommand\L{{\mathbb{L}}}
\newcommand\E{{\mathbb{E}}}
\newcommand\El{{\mathcal{E}}}
\numberwithin{equation}{section}
\title{Bogomolov property and Galois representations}
\author{Francesco Amoroso$^{(1)}$ and Lea Terracini$^{(2)}$}
\date{}
\begin{document}

\maketitle
\vskip -0.5cm
\begin{center}
{\small\it
$^{(1)}$Dipartimento di Matematica\\
Universit\`a di Torino\\
Via Carlo Alberto, 10 - 10123 Torino, Italy\\[0.3cm]
$^{(2)}$Dipartimento di Informatica\\
Universit\`a di Torino\\
Corso Svizzera, 185 - 10149 Torino, Italy}
\end{center}

\begin{abstract}
In 2013 P. Habegger proved the Bogomolov property for the field generated over $\Q$ by the torsion points of a rational elliptic curve. We explore the possibility of applying the same strategy of proof to the case of field extensions cut out  by Galois representations arising from more general modular forms. 
\end{abstract}

\smallskip
\noindent \textbf{\small 2020 Mathematical Subject Classification:} 11F03, 11F80, 11G50

\bigskip
\noindent \textbf{\small Keywords.} Weil height, Bogomolov property, modular forms, Galois representations.

\section{Introduction}
\label{intro}
Let $\alpha$ be an algebraic number of degree $d$. We denote by $h(\alpha)$ the absolute and logarithmic Weil height. Assuming that $\alpha$ is not a root of unity, D. H. Lehmer suggested that $dh(\alpha)$ can be bounded below uniformly in $\alpha$. Known as \lq\lq Lehmer's problem\rq\rq, this question is still open. Nevertheless, in some special cases, not only Lehmer's conjecture is true, but it can also be sharpened. This different point of view was considered for the first time in \cite{BombieriZannier2001}, where the authors introduced the notion of fields with Bogomolov property, (property \B{} for short), where the height is {\sl uniformly  bounded} below outside torsion points. By Northcott's Theorem, number fields are trivial examples of fields with \B{}. The compositum $\Q^{\rm tr}$ of all totally real fields is a less trivial example (apply~\cite[Corollary $1^\prime$]{Schinzel1974} to the linear polynomial $P(z)=z-\alpha$ to get the optimal lower bound). In~\cite{BombieriZannier2001} the authors also show that fields with bounded local degrees at some finite place has Property \B{}. Note that by a result of~\cite{Checcoli2013} a Galois extension has uniformly bounded local degrees at {\sl every} finite place if and only if its Galois group has finite exponent. Thus (infinite) Galois extensions with Galois group of bounded exponent have Property \B{}.

There are still other several interesting examples of subfields of the algebraic numbers with Property \B{}. Zannier conjectured that the maximal abelian extension $\Q^{\rm ab}$ of the rational field also satisfies property \B{}, and this was proved in \cite{AmorosoDvornicich2000}.  By a special case of \cite[Theorem 1.1]{AmorosoZannier2000}, the same is true for $\K^{\rm ab}$, for any number field $\K$. Moreover we can combine this result with the quoted result on Galois extensions of bounded exponent. Let $\K$ be a number field and let $\L/\K$ be an infinite Galois extension with Galois group $G$ with $G/Z(G)$ of finite exponent. Then $\L$ has Property \B{} (\cite[Corollary 1.7]{AmorosoDavidZannier2014}).

Another class of fields with Property \B{} has been exhibited in \cite{Habegger2013}. Let $\El$ be an elliptic curve defined over \(\Q\). Then the field $\Q(\El_{tors})$ obtained by adjoining all torsion points of $\El$ has the Bogomolov property. If $\El$ has complex multiplication then $\Q(\El_{tors})$ is an abelian extension of a quadratic number field and thus it has the property \B{} by the quoted result of \cite{AmorosoZannier2000}. However for a non-CM elliptic curve there are no number fields $\K$ such that $\Q(\El_{tors})$ is contained in the maximal abelian extension of $\K$. \par
Note that by Kronecker-Weber theorem $\Q^{\rm ab}$ is the field obtained by adjoining all torsion points of the multiplicative group. A conjecture due to S. David (see \cite[Conjecture, p.114]{Frey2021}) suggests that $\Q(A_{tors})$ has Property \B{}, when $A$ is an abelian variety defined on a number field, according as it is already known for the algebraic groups $\Gm$ and $\El=$ elliptic curve. Note that for a CM abelian variety $A$ the field $\Q(A_{tors})$ is still an abelian extension of a (CM) number field, and thus property \B{} holds, again by the quoted result of \cite{AmorosoZannier2000}.  

Habegger’s method provides a lower bound $h(\alpha)\geq c(p)$ with $c(p)>0$ whenever we have a prime $p\geq 5$ such that $\El$ has supersingular reduction at $p$ (assumption $(P1)$) and the Galois representation $\Gal(\Qb/\Q)\to {\rm Aut}(\El[p])$ is surjective (assumption $(P2)$). See \cite[Theorem 1.3, p.114]{Frey2021} for an explicit result. By a result of Elkies there exist infinitely many primes satisfying $(P1)$; and by a result of Serre $(P2)$ holds outside a finite set of primes. Thus we can find a prime $p$ satisfying both the above conditions. 
Let now A be an abelian variety. Even under similar assumptions for a prime $p$, an analogue of the result of Habegger is not known.\\

One of the main purposes of this article is to re-interpret some known results on property \B{} in terms of Galois  representations, with the aim of providing a unifying framework for a part of the theory.

\begin{definition}
Let $\Omega$ be a separated topological group, $\KK$ be a subfield of $\Qbar$ and $G_\KK=\Gal(\Qbar/\KK)$. We say that a continuous homomorphism 
$$
\rho: G_\KK \longrightarrow \Omega,
$$
has property \B{} if $\KK(\rho):=\Qbar^{\ker(\rho)}$ does. 
\end{definition}
 
From this point of view,  the main result  of \cite{AmorosoDvornicich2000} implies (by Galois theory) that  every homomorphism of $G_\Q$ taking values in an abelian group has Property \B{}. 

In this vein,~\cite[Corollary 1.7]{AmorosoDavidZannier2014} has the following consequence. Let $\K$ be a subfield of $\Qbar$, $\kappa$ be any field, $\rho:G_\K\to \GL_n(\kappa)$ be an absolutely irreducible representation, and $\L=\KK(\rho)$. Schur's Lemma asserts that every intertwining automorphism of  $\rho$ is a scalar operator. In particular the image of the center of $G=\Gal(\L/\K)$ is contained in the subgroup of scalar matrices of $\GL_n(\kappa)$, so that  the reduction map  $\tilde\rho: G\to \PGL_n(\kappa)$ factors through $G/Z(G)$. Thus, if the image of $\tilde\rho$ has finite exponent, the same is true 
for $G/Z(G)$ and~\cite[Corollary 1.7]{AmorosoDavidZannier2014} implies that $\rho$ has property \B{}.

Habegger theorem \cite[Theorem 1]{Habegger2013} establishes Property \B{} for the map
$$
\rho_\El: G_\Q \longrightarrow \GL_2(\widehat\ZZ)
$$ 
which is the product of all the $\ell$-adic representations
$$
\rho_{\El,\ell}: \Gal(\Qbar/\Q) \longrightarrow Aut(T_\ell(\El))=\GL_2(\ZZ_\ell)
$$
where $\El$ is an elliptic curve defined over $\Q$ and $T_\ell(\El)$ is the corresponding Tate module. Indeed, by Galois correspondence $\Q(\El[\ell^n])$ is the subfield of $\Qbar$ fixed by the kernel of the action of $G_\Q$ on the $\ell^n$-torsion points of $\El$; by passing to the projective limit we see that $\Q(\El[\ell^\infty])=\QQ(\rho_{\El,\ell})$. Since $\Q(\El_{tors})$ is the compositum of $\Q(\El[\ell^\infty])$,   for every prime $\ell$, then it coincides with the  field fixed  by $\bigcap_\ell\ker(\rho_{\El,\ell})=\ker(\rho_{\El})$.\par

By the Shimura-Taniyama-Weil conjecture, proved by Wiles, all elliptic curves defined over $\Q$ are associated to a cuspidal eigenform of weight $2$ on $\Gamma_0(\No)$, whose eigenvalues for the Hecke operators are rational numbers. A question that naturally arises starting from Habegger's theorem is whether and to what extent it can be generalized to other Galois representations associated with modular forms. Let $k\geq 2$ be a positive integer and $f\in S_k(\Gamma_0(\No))$  be a normalized eigenform for the Hecke algebra. Let $\K_f$ be the number field generated by the Hecke eigenvalues of $f$, and  let $\calo_f$ be its ring of integers. A construction due to Shimura and Deligne attaches to $f$ a family of compatible Galois representations 
$$
\rho_{f,v}:G_\Q \longrightarrow \GL_2(\calo_{f,v})
$$
indexed by the finite places $v$ of $\calo_f$.\par
For a rational prime $p$, we let 
$$
\rho_{f,p}:G_\Q \longrightarrow \prod_{v|p} \GL_2(\calo_{f,v} ).
$$
Moreover, let 
$$
\rho_f=\prod_{v}\rho_{f,v}:G_\Q\longrightarrow \GL_2(\widehat\calo_f),
$$
where $\widehat\calo_f$ is the profinite completion of $\calo_f$.\par
Inspired by Habegger's theorem, we venture the following:
\begin{conjecture}
\label{conj:main} 
Let $f\in S_k(\Gamma_0(\No))$ be a normalized eigenform. Then,  
\begin{itemize}
    \item[a)]  for every $v$, $\rho_{f,v}$ has property \B{};
    \item[b)] $\rho_{f}$ has property \B{}.
    \end{itemize}
\end{conjecture}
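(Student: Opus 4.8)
\smallskip
\noindent\emph{A strategy.} We sketch how one might attack Conjecture~\ref{conj:main}, taking Habegger's proof as a template. First, two reductions. Since $\ker(\rho_f)=\bigcap_v\ker(\rho_{f,v})$ we have $\FF(\rho_{f,v})\subseteq\FF(\rho_f)$ for every $v$, and property \B{} passes trivially to subfields; hence b) implies a), and it suffices to treat b). Second, if $f$ has complex multiplication by an imaginary quadratic field $E$, then $\rho_f|_{G_E}$ is a sum of two characters, so $\FF(\rho_f)\cdot E$ is an abelian extension of $E$ with $[\FF(\rho_f)\cdot E:\FF(\rho_f)]\le 2$; by \cite[Theorem 1.1]{AmorosoZannier2000} with base field $E$, $\FF(\rho_f)$ has property \B{}. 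So we may assume $f$ is non-CM.

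The crux is to produce a rational prime $p$ playing the part that a supersingular prime plays for an elliptic curve. We look for $p$ with: (i) $p\nmid\No$; (ii) $p>k$, so that the relevant crystalline representations lie in the Fontaine--Laffaille range; (iii) $f$ non-ordinary at $p$, i.e.\ $v(a_p)>0$ for every place $v\mid p$ of $\calo_f$ (for $k=2$ attached to an elliptic curve $\El$ this is supersingular reduction at $p$); and (iv) $\bar\rho_{f,v}$ has image as large as possible for every $v\mid p$, which holds for all but finitely many $v$ by Ribet's theorems on images of modular Galois representations, once $f$ is non-CM and its inner twists are accounted for. Given such a $p$, one then studies $\rho_{f,v}|_{G_{\QQ_p}}$: it is crystalline with Hodge--Tate weights $\{0,k-1\}$, and the goal --- modeled on the supersingular elliptic curve case, where the local image lies in the normalizer of a non-split Cartan and the representation becomes a sum of characters over the unramified quadratic extension $\QQ_{p^2}$ --- is to show that the completion of $\FF(\rho_{f,v})$ at a place above $p$ is contained in the compositum of an abelian extension of $\QQ_{p^2}$ with a finite extension of $\QQ_p$, both depending only on $f$ and $p$. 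Since $p\nmid\No$, the representations $\rho_{f,w}$ with $w\nmid p$ are unramified at $p$ (by N\'eron--Ogg--Shafarevich and the compatibility of the Shimura--Deligne system), so their completions at $p$ sit inside $\QQ_p^{\rm nr}\subseteq\QQ_{p^2}^{\rm ab}$; hence the completion of $\FF(\rho_f)$ at $p$ has the same shape. The Bombieri--Zannier and Amoroso--Zannier techniques on the Bogomolov property of abelian extensions of a $p$-adic field --- the engine behind Habegger's bound $h(\alpha)\ge c(p)$ --- should then yield $c=c(f,p)>0$ with $h(\alpha)\ge c$ for every non-torsion $\alpha\in\FF(\rho_f)^\times$, which is b).

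I expect the main obstacle to be step (iii), and it is the reason the statement is only a conjecture: for an eigenform of weight $k\ge 4$ it is not known that there are infinitely many non-ordinary primes, so one cannot in general secure $p$ with $p>k$ and $v(a_p)>0$ simultaneously. For $k=2$ with $f$ attached to an elliptic curve over $\QQ$ this is precisely Elkies' theorem, (iv) is Serre's, and the method recovers Habegger's result; beyond that it is conditional. A second difficulty, even granting a good $p$, lies in the local step for $k>2$: the two Frobenius slopes need no longer be equal, the local representation need not be induced from $\QQ_{p^2}$, and one must either show it is nonetheless abelian over a fixed finite extension of $\QQ_p$ with uniformly bounded auxiliary factor, or treat the genuinely non-abelian case directly; outside the Fontaine--Laffaille range the local representation may be too ramified for the argument to survive. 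Finally, if b) turns out to be unreachable, a) can still be established directly for those $v$ lying over a non-ordinary prime $p>k$ with large residual image, but the $v$ over ordinary primes remain out of reach: there $\FF(\rho_{f,v})$ is metabelian and deeply ramified at $p$, and no Bogomolov property is presently known for such fields.
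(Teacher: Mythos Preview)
Your sketch is honest about being a strategy for a conjecture rather than a proof, and its broad shape matches the paper's: reduce to the non-CM case via \cite{AmorosoZannier2000}, isolate a good prime $p$, exploit the local structure at $p$, and combine with the unramified-at-$p$ part coming from the other places. Two points, however, separate your outline from what the paper actually carries out. First, your condition~(iii) ``non-ordinary'' (i.e.\ $v(a_p)>0$ for all $v\mid p$) is strictly weaker than what the argument needs: the paper requires $a_p=0$ (Assumption~\ref{ass:main}\,$(P1)$), because only then does Breuil's classification force ${\rho_{f,v}}_{|G_{\Q_p}}\simeq V_{k,0}\simeq(\mathrm{ind}_{G_{\Q_q}}^{G_{\Q_p}}\epsilon_2^{k-1})\otimes\mu_{\sqrt{-1}}$ and hence the abelian-over-$\Q_{p^2}$ structure you want. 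When $\K_f\neq\Q$, non-ordinarity does not give $a_p=0$, and without $a_p=0$ the local representation need not be induced; you flag this as a ``second difficulty'' but in fact it is already an obstruction at the level of your hypothesis~(iii).

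Second, and more substantively, your description of the height step (``Bombieri--Zannier and Amoroso--Zannier techniques\dots should then yield $c(f,p)>0$'') glosses over the main technical content. Knowing that the completion of $\FF(\rho_f)$ at $p$ is abelian over $\Q_{p^2}$ is not by itself enough: none of the quoted results applies directly, since the extension is neither of bounded local degree nor abelian over a number field. The paper's engine (Proposition~\ref{prop:machine}) requires, beyond the local ramification filtration coming from Lubin--Tate theory, a \emph{global} control: the normal closure in $\Gal(\Q(p^n)/\Q)$ of the last nontrivial ramification group must equal $\Gal(\Q(p^n)/\Q(p^{n-1}))$ (Proposition~\ref{prop:H.lemma6.2}). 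This is where the large-image hypothesis $(P2)$ is genuinely used, via a group-theoretic analysis of $\GL_2(\calo/p\calo)$. Your sketch has no analogue of this normal-closure step, and without it the metric inequality at ramified places cannot be globalized.
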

Notice that Conjecture \ref{conj:main}a) does not imply Conjecture \ref{conj:main}b), as property \B\  is not preserved under field composition (for instance, $\Q^{\rm tr}$ and obviously also $\Q(i)$ have property \B{} but not $\Q^{\rm tr}(i)$, see~\cite[Theorem 5.3, p. 1902]{AmorosoDavidZannier2014}). 

If $f$ is a CM modular form, then  $\QQ(\rho_f)$ is an abelian extension of a quadratic field, (see for instance \cite[Proposition 4.4]{Ribet1977}). Thus $\rho_{f}$ has property \B{} by the special case $D=1$ of~\cite[Theorem 1.1]{AmorosoZannier2000}.

When $k=2$, Shimura \cite{Shimura1973} associated to $f$ an abelian variety $A$ defined over $\Q$, of dimension $[\K_f:\Q]$ equipped with an action of $\calo_f$; such abelian varieties have been called {\sl of type $\GL_2$} in \cite{Ribet1997} and a generalization of Wiles' theorem~\cite[Corollary 10.2]{KhareWintenberger2009} establishes that all of them arise from modular forms. 

Let $A$ be the abelian variety associated to a modular form $f$ of weight $k=2$. Then $\Q(\rho)=\Q(A_{tors})$. Indeed by costruction, for every prime $p$ the representation $\rho_{f,p}=\prod_{v|p} \rho_{f,v}$ is given by the action of $G_\QQ$ on the $p$-adic Tate module $T_p(A)=\varprojlim_{n}A[p^n]$. Then it is immediate to see that $\QQ(A[p^\infty])$ is the fixed field by $\ker\rho_{f,p}$, so that $\QQ(\rho_{f,p})=\QQ(A[p^\infty])$. Passing to $\rho_f$, we have 
$$
\Q\left (\rho_f\right )=\Q\left (\prod_p\rho_{f,p}\right )=\coprod_p\Q\left (A[p^\infty]\right )=\Q \left (\bigcup_{p}A[p^\infty]\right)=\Q \left (\sum_{p}A[p^\infty]\right ),
$$
where the last equality holds because the sum in $A$ is given by rational functions over $\QQ$.

Thus Conjecture~\ref{conj:main} intersects the already mentioned conjecture of David, which predicts that $\Q(A_{tors})$ has Property \B{}, when $A$ is an abelian variety defined on a number field.\\

Assume that $\sum_{n\geq 1}a_nq^n$ is the $q$-expansion of $f$. Let $p$ be a rational prime. Having fixed $f$, we consider the following assumption on $f$ and $p$:
\begin{assumptions}
\label{ass:main}~
\begin{itemize}
\item[$(P0)$] $p\nmid \No$;
\item[$(P1)$] $a_p=0$;
\item[$(P2)$]  the reduction $G_\Q\to \GL_2(\calo_f/p\calo_f)$ of $\rho_{f,p}$  has image\footnote{The image is always contained in 
$\widehat{G}(\calo_f/p\calo_f)$, see the remark above Assumption~\ref{ass:main}.}
$$
\widehat{G}(\calo_f/p\calo_f)=\{\gamma\in \GL_2(\calo_f/p\calo_f)\ |\ \det\gamma\in(\FF_p^\times)^{k-1}\}.
$$
\item[$(P3)$]\!\!\footnote{Note that all these conditions are satisfied if $p\geq 2k-1$.}  $p\geq 5$, $p\nmid k-1$ and $\frac{p+1}2\nmid k-1$.\end{itemize} 
\end{assumptions}
Our main result is the following:
\begin{theorem}
\label{teo:main1}
Let $f=\sum_{n\geq 1} a_nq^n \in S_k(\Gamma_0(\No))$ be a normalized eigenform. Assume that there exists  a rational prime $p$ which satisfies Assumption~\ref{ass:main}. 
Let $\F/\QQ$ be a Galois extension unramified at $p$ with property \B. Then $\rho_{f,p}|_{G_\F}$ has property \B.
\end{theorem}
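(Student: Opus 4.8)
The plan is to transpose Habegger's strategy \cite{Habegger2013} to the relative setting over the base $\F$. By Galois theory the field $\FF(\rho_{f,p}|_{G_\F})=\Qbar^{\ker\rho_{f,p}\cap G_\F}$ is the compositum $L:=\F\cdot\QQ(\rho_{f,p})$, where $\QQ(\rho_{f,p})=\Qbar^{\ker\rho_{f,p}}$, so we must produce a constant $c=c(f,p)>0$, also depending on the Bogomolov constant of $\F$, such that $h(\alpha)\ge c$ for every $\alpha\in L^\times$ that is not a root of unity.

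First I would determine the local structure of $L$ at $p$. By $(P0)$ the form $f$ has good reduction at $p$, so for a decomposition group $D_p\subset G_\QQ$ the representation $\rho_{f,p}|_{D_p}$ is crystalline with Hodge--Tate weights $\{0,k-1\}$; by $(P1)$, namely $a_p=0$, it is of supersingular type, hence irreducible, and under $(P3)$ --- which places $p$ in the Fontaine--Laffaille range and keeps the pertinent powers of the fundamental characters nontrivial --- the reduction of $\rho_{f,p}|_{I_p}$ is $\psi_2^{\,k-1}\oplus\psi_2^{\,p(k-1)}$ for a fundamental character $\psi_2$ of level $2$, while $\rho_{f,p}|_{D_p}$ itself is induced from a crystalline character of a quadratic extension of $\QQ_p$. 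Consequently $\QQ_p(\rho_{f,p})$ agrees, up to a finite and at worst unramified modification, with a Lubin--Tate extension of that quadratic field, so its ramification filtration is explicitly under control. By $(P2)$ the image of $\rho_{f,p}$ at level $p$ is all of $\wG(\calo_f/p\calo_f)$, and a standard lifting argument gives the expected open image at every finite level, so a suitable place $w\mid p$ of $L$ has decomposition group realizing the full local image. Finally, since $\F/\QQ$ is unramified at $p$, the completion $\F_w$ is unramified over $\QQ_p$ and $L_w/\QQ_p$ has exactly the ramification of $\QQ_p(\rho_{f,p})/\QQ_p$: all the ``new'' ramification added over $\F$ is $p$-adic and of Lubin--Tate type.

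With this picture I would run Habegger's height descent at $w$. For $\alpha\in L^\times$ not a root of unity, the large Galois action furnished by $(P2)$ spreads out the conjugates of $\alpha$ at $w$; the quantitative heart of the argument is the analogue of Habegger's formal-logarithm estimate, showing that in a Lubin--Tate extension an algebraic number of given local degree cannot be $p$-adically too close to a root of unity, which via the product formula forces $h(\alpha)\ge c(f,p)$ unless $\alpha$ is, up to a controlled correction, ``orthogonal to the Lubin--Tate direction''. In that remaining case $\alpha$ effectively lies in the compositum of $\F$ with a bounded extension, and the Bogomolov property of $\F$ together with a Northcott-type finiteness for that bounded piece yields the bound. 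The cyclotomic ramification present in $\QQ_p(\rho_{f,p})$ via the determinant has to be isolated and treated on its own; it is the supersingular, Lubin--Tate direction that drives the lower bound, which is why $(P1)$ is indispensable.

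I expect two points to require the most care. The first is the one flagged after Conjecture~\ref{conj:main}: since property \B{} is not preserved under composition, the two regimes above cannot be glued by a soft argument, and one must control a single finite subextension of $L$ containing $\alpha$, interleaving Habegger's local estimate at $w$ with the Bogomolov bound for $\F$ uniformly over all such subextensions --- here it is essential that the only extra ramification over $\F$ is $p$-adic and of Lubin--Tate shape. The second is Step~1 for general weight $k\ge 2$: it needs the $p$-adic Hodge-theoretic description of $\rho_{f,p}|_{D_p}$ together with the image computation underlying Assumption~\ref{ass:main}, and conditions $(P1)$, $(P3)$ are precisely what rule out the ordinary and ``fake level~$1$'' degenerations in which this structure, hence the whole approach, would collapse.
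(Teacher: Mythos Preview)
Your strategy matches the paper's: prove $\F\cdot\QQ(\rho_{f,p})$ has property \B{} by descent along the tower $\L_n=\QQ(\rho_{f,p}\bmod p^n)$, using the Lubin--Tate description of $\rho_{f,p}|_{D_p}$ and the Bogomolov constant of $\F$ for the base case. But two key mechanisms are missing or mislabelled.

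First, the descent does not run on a vague ``orthogonality'' dichotomy but on a specific central inertia element: by Proposition~\ref{lem:3.3}(iii) there is $\tau$ in inertia at $p$ with $\rho_v(\tau)$ scalar, hence $\tau|_\L$ central in $\Gal(\L/\QQ)$, and $\e_p(\tau)=g>1$. One replaces $\alpha$ by $\beta=\tau(\alpha)/\alpha^g$ and takes the minimal $n$ with $\beta\in\F_0\L_n$. If $n=0$ then $\beta\in\F$ and property \B{} of $\F$ applies directly---no Northcott argument enters. If $n\ge1$, some conjugate $\beta'$ is moved by the last ramification group and the metric inequality plus the product formula give the bound. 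Incidentally, the paper does not use a formal-logarithm or equidistribution step; it substitutes the acceleration lemma of \cite{AmorosoDavidZannier2014}, which needs only the ramification-index bound from the Lubin--Tate filtration (Proposition~\ref{lem:3.3}(ii)).

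Second, what you dismiss as a ``standard lifting argument'' is the technical core. One must show that the normal closure in $\Gal(\L_n/\QQ)$ of the \emph{local} last ramification group $\Gal(\QQ_q(p^n)/\QQ_q(p^{n-1}))$ equals the \emph{global} $\Gal(\L_n/\L_{n-1})$ (Proposition~\ref{prop:H.lemma6.2}); this is what yields $\L_n^{\overline{H_n}}=\L_{n-1}$, without which the descent fails because a $\beta$ fixed by all conjugates of $H_n$ need not drop to the previous level. The proof uses $(P2)$ together with nontrivial results on $\SL_2$ and its adjoint representation over the ring $\calo_f/p\calo_f$ (Appendix~\ref{linear-groups}), which is in general not a field. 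Finally, the cyclotomic determinant is not isolated and handled separately; it is absorbed into the induced character $\epsilon_2^{k-1}$ and treated uniformly with the rest of the Lubin--Tate ramification.
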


As a consequence (see section~\ref{sect:proofs} for the proof), we obtain the following direct generalization of Habegger's result:

\begin{theorem}
\label{teo:main}
Let $f=\sum_{n\geq 1} a_nq^n \in S_k(\Gamma_0(\No))$ be a normalized eigenform. Assume that there exists a rational prime $p$ satisfying Assumption~\ref{ass:main}. Then $\rho_f$  has property \B.
\end{theorem}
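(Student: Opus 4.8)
The plan is to deduce Theorem~\ref{teo:main} from Theorem~\ref{teo:main1} by exhibiting $\QQ(\rho_f)$ as an increasing union of fields covered by Theorem~\ref{teo:main1}, and by checking that the height lower bound supplied by that theorem is \emph{uniform} along the union.

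First I would factor $\rho_f = \rho_{f,p}\times\rho_f^{(p)}$, where $\rho_f^{(p)} := \prod_{\ell\neq p}\rho_{f,\ell}$ collects the components away from $p$, and set $\F_\infty := \QQ(\rho_f^{(p)})$. Since the fixed field of a product of representations is the compositum of the fixed fields of the factors, $\QQ(\rho_f) = \QQ(\rho_{f,p})\cdot\F_\infty$. By the Shimura--Deligne construction, $\rho_{f,v}$ with $v\mid\ell$ is unramified outside $\ell\No$; as $(P0)$ gives $p\nmid\No$, each $\rho_{f,\ell}$ with $\ell\neq p$ is unramified at $p$, hence so is their compositum: $\F_\infty/\QQ$ is Galois and unramified at $p$. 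One cannot, however, apply Theorem~\ref{teo:main1} to $\F=\F_\infty$ directly, since asserting property \B{} for $\F_\infty$ is tantamount to Conjecture~\ref{conj:main}\,b) for $\rho_f^{(p)}$. So instead I would write $\F_\infty$ as the union of an increasing cofinal chain $\F_1\subseteq\F_2\subseteq\cdots$ of its finite subextensions that are Galois over $\QQ$.

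Each $\F_n$ is then a number field — so it has property \B{} by Northcott's theorem — it is Galois over $\QQ$, and it is unramified at $p$ because $\F_\infty$ is. Theorem~\ref{teo:main1} applied with $\F=\F_n$ gives that $\rho_{f,p}|_{G_{\F_n}}$ has property \B{}; since $\FF(\rho_{f,p}|_{G_{\F_n}})=\F_n\cdot\QQ(\rho_{f,p})$, this means there is $c_n>0$ with $h(\alpha)\geq c_n$ for every non-torsion $\alpha\in\F_n\cdot\QQ(\rho_{f,p})$. Because $\QQ(\rho_f)=\bigcup_n\bigl(\F_n\cdot\QQ(\rho_{f,p})\bigr)$ is an increasing union of fields, it then suffices to know that the $c_n$ may be taken equal to a single constant $c=c(f,p)>0$: every non-torsion $\alpha\in\QQ(\rho_f)$ lies in some $\F_n\cdot\QQ(\rho_{f,p})$, whence $h(\alpha)\geq c$, while the roots of unity in $\QQ(\rho_f)$ are exactly those appearing in some term.

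The one step that needs genuine justification — and the main obstacle — is this uniformity, which obliges one to inspect the proof of Theorem~\ref{teo:main1} rather than merely quote its statement: there the lower bound is obtained by a local analysis at a place above $p$, and the quantities that enter are the ramification of $\QQ(\rho_{f,p})/\QQ_p$ (governed, through the $a_p=0$ condition $(P1)$ and by $(P2)$--$(P3)$, by the supersingular-type structure of $\rho_{f,p}$) together with the $p$-adic behaviour of the base field; when that base is unramified at $p$ it contributes only an (arbitrarily large) residue-field extension, which is harmless for the estimate, so $c_n$ can be chosen to depend only on $f$ and $p$. Granting this, the union argument yields property \B{} for $\QQ(\rho_f)$, that is, $\rho_f$ has property \B{}. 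I expect essentially all of the effort to lie in confirming that the constant in Theorem~\ref{teo:main1} is independent of the auxiliary field $\F$; the descent from a single prime $p$ to the entire compatible system is then formal.
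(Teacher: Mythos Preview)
Your decomposition $\rho_f=\rho_{f,p}\times\rho_f^{(p)}$ and the observation that $\F_\infty=\QQ(\rho_f^{(p)})$ is unramified at $p$ are exactly right, and coincide with the paper. The divergence comes at the next step, and it is a genuine gap.

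You write that ``asserting property \B{} for $\F_\infty$ is tantamount to Conjecture~\ref{conj:main}\,b) for $\rho_f^{(p)}$'', and therefore pass to the finite layers $\F_n$. This is the mistake: the paper shows (Proposition~\ref{teo:unramified}) that $\F_\infty$ \emph{does} have property \B{}, directly and cheaply. The point is that under $(P0)$--$(P1)$ the characteristic polynomial of $\rho_f^{(p)}(\Frob_p)$ is $X^2+p^{k-1}$, so by Cayley--Hamilton $\Frob_p^2$ acts as the scalar $-p^{k-1}$ and hence lies in the center of $\Gal(\F_\infty/\Q)$. Proposition~\ref{no-ram} (finite ramification plus Frobenius of finite order in $G/Z(G)$) then gives property \B{} for $\F_\infty$ with a constant depending only on $p$. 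With this in hand the paper simply applies Theorem~\ref{teo:main1} once with $\F=\F_\infty$, and the proof is over.

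Your alternative route through the finite $\F_n$ does not close as written. The lower bound produced in the proof of Theorem~\ref{teo:main1} (via Proposition~\ref{prop:machine}) is a minimum of two quantities: a constant depending only on $p$, $C_1$, $C_2$ coming from the ramified analysis in $\L=\QQ(\rho_{f,p})$, \emph{and} the constant $c_0$ witnessing property \B{} for $\F$, which enters in the case ``$n=0$'' where the auxiliary element $\beta$ falls into $\F$. If you take $c_0$ from Northcott for the growing number fields $\F_n$, it is not uniform in $n$, so your $c_n$ need not be bounded away from $0$. Your heuristic that an unramified base ``contributes only an arbitrarily large residue-field extension, which is harmless'' accounts for the ramified branch of the argument but overlooks this $c_0$-branch. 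The fix is precisely the observation you dismissed: $\Frob_p^2$ is central, which gives a uniform $c_0$ for all $\F_n$ (indeed for $\F_\infty$) via Proposition~\ref{no-ram}; but once you see that, the layer-by-layer detour is unnecessary.
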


 In Section~\ref{sect:proofs} we also prove a more general statement (Corollary \ref{cor:genplessis}),  claiming that property \B\  holds for $\FF(\rho_f)$, where $\FF$ is any Galois algebraic field  unramified at $p$ and such that a power of Frobenius is central in $\Gal(\FF/\QQ)$.

Recall that a modular form $f$ is  {\sl supersingular} at a prime $\mathfrak{p}$ of $\calo_f$ above $p$ if $a_p\in\mathfrak{p}$. When $a_p\in\ZZ$ (as in the case $\K_f=\Q$) and $p\geq 5$, this implies $a_p=0$, by the Deligne bound: $|a_p|\leq 2\sqrt{p}$. Therefore in the case of elliptic curves, Habegger's assumption $p\geq 5$ supersingular is equivalent to $a_p=0$. Note that when $\K_f\not=\Q$ the condition $a_p=0$, required in our proof, is in general stronger than supersingularity.\par
Our condition $(P2)$ generalizes Habegger's assumption on the surjectivity of the Galois representation $G_\Q\to {\rm Aut}(\El[p])$.  It has been extensively investigated in the literature (see \cite{Ribet1985, Ribet1997, Arai2012, GhateParent2012,Lombardo2016}); we summarize here the most relevant results for the purposes of this work. 

In the case of modular forms associated to a non CM elliptic curve, a theorem of Serre \cite{Serre1972} establishes property $(P2)$ for every $p$ except finitely many. This  result, together with the aforementioned Elkies'theorem, ensures the non conditional character of Habegger's theorem.

For a modular form such that $\KK_f\not=\QQ$  the situation concerning the image of $\rho_{f,p}$ is complicated  by the possible of presence of {\sl inner twists}; i.e. the elements of  the subgroup $\Gamma \subseteq \mathrm{Aut}(\KK_f)$
 consisting of those automorphisms $\sigma$
 for which $f^\sigma$
 is equal to the twist of $f$
 by some Dirichlet character. Let $\KK_1$ be the subfield of $\KK$ fixed by $\Gamma$.
It is proved in \cite[Theorem 3.1]{Ribet1985} that there is an abelian number field $E$ such that for $p$ large enough relatively to $f$,  the restriction $\rho_{f,p}|_{G_E}$ has  image $\widehat G(\calo_{\KK_1}\otimes \ZZ_p)$.

Therefore, all we can say is that condition $(P2)$ holds for $p$ large enough when $f$ has no inner twists. In the case of trivial nebentypus (the one we are dealing with) it is easy to see that inner twists must arise from a quadratic Dirichlet character; and Ribet proved that they do not appear when $N$ is square free \cite[3.9]{Ribet1980}.

In particular, the results in \cite{Ribet1985} imply $(P2)$ when $k=2$, $\No$ is a prime, $p\nmid 6(\No-1)$, $\K_f$ is unramified at $p$  and $\calo_f$ is generated over $\ZZ$ by the coefficients $a_n$. 

Some explicit examples with $k=2$ of such modular forms satisfying also $(P0)$ and $(P1)$ are the following:
\begin{enumerate}[-]
\item $N=73$, $p=59$, $\K_f=\QQ(\sqrt{13})$ (form 73.2.a.c in \cite{LMFDB2023}).
\item $N=167$, $p=11$, $\K_f=\QQ(\sqrt{5})$ (form 167.2.a.a in \cite{LMFDB2023}). 
\item $N=383$, $p=13$, $\K_f=\QQ(\sqrt{5})$ (form 383.2.a.a in \cite{LMFDB2023}. 
\item $N=151$, $p=41$, $[\K_f:\QQ]=3$ (form 151.2.a.a in \cite{LMFDB2023}).
\end{enumerate}
Some other examples in weight $k>2$ with $a_p=0$, can be produced when $\KK=\QQ$. In this case in fact conditions (i),(ii), (iii) in \cite[page 192]{Ribet1985} are automatically fulfilled, and if $a_p=0$, then condition (iv) is also satisfied, because the residual representation $\bar\rho_v$ is irreducible, for every $v|p$. There are several  examples of this type with $k=4$, for example
\begin{enumerate}[-]
\item $N=186$, $p=11$, (form 186.4.a.a in \cite{LMFDB2023}).
\item $N=210$, $p=11$ or $p=23$, (form 210.4.a.e in \cite{LMFDB2023}). 
\item $N=1265$, $p=53$,  (form 1265.4.a.c in \cite{LMFDB2023}). 
\end{enumerate}
Some other examples with $k\geq 6$ are:
\begin{enumerate}[-]
\item $N=390$, $p=7$, $k=6$ ((form 390.6.a.c in \cite{LMFDB2023}).
\item $N=66$, $p=5$, $k=8$ (form 66.8.a.a in \cite{LMFDB2023}).
\end{enumerate}
We were not able to find examples with weight $>8$.\par Indeed, the simultaneous occurrence of conditions $(P0)$ and $(P1)$ for a non CM modular form is expected to be  a fairly rare phenomenon. In this direction, Calegari and Sardari \cite{CalegariSardari2021} proved that, fixed a pair of coprime integers $p,N$, there is only a finite number of eigenforms for $\Gamma_1(N)$ having $a_p=0$.\\

\noindent{\bf Plan of the paper.}~\par

In Section~\ref{sect:preliminary} we introduce most of the notations we need and we make some preliminary remarks.

 Section~\ref{sect:dio} is devoted to two general statements on lower bounds for the height. The proofs rely on the techniques developed in~\cite{AmorosoDvornicich2000}, \cite{AmorosoZannier2000}, \cite{Habegger2013} and \cite{AmorosoDavidZannier2014}. We fix two (possibly infinite) Galois extensions $\F/\Q$, $\L/\Q$. We fix a rational prime $p$. Usually, lower bounds for the height come from a dicotomy depending on whether $p$ is ramified or not in $\L$. In the second case a Frobenius argument is generally enough to conclude. In the first case we need a descent argument. Here we change the point of view, assuming $\F$ not ramified at $p$, with the residual degrees at $p$ not growing from $\F$ to $\L$. Moreover, we avoid  the equidistribution argument of~\cite{Habegger2013}, by using instead the acceleration lemma of~\cite{AmorosoDavidZannier2014}. This simplifies the proofs, although it leads to worse lower bounds, as remarked in~\cite[Section 4.2.2]{Frey2018}. As a toy example we deduce a generalization (Theorem~\ref{thm:abeliano+})  of the main result of~\cite{AmorosoDvornicich2000}.

One of the crucial steps in applying the height-machinery above is to have a precise as possible description of the sequence of the local higher ramification groups. This is done in~\cite[Lemma 3.3]{Habegger2013} using the fact that the formal group associated to the elliptic curve is a Lubin-Tate module over $\Z_p$.
In Section~\ref{sect:crystalline} we show that, by local Langlands correspondence and the classification of the two dimensional crystalline representations of $G_{\QQ_p}$ given by Breuil, our local representation at $p$ is a twist of a power of a fundamental character, so that Lubin Tate theory is also applicable in  our more general case.

Section~\ref{sect:normal-closure} is devoted to prove Proposition \ref{prop:H.lemma6.2}, which is
  the analogous of Lemma 6.2 of \cite{Habegger2013} on the normal closure of the last ramification group in the field cut out by the reduction modulo $ p^n$ of our modular representation. This was possible thanks to condition $(P2)$ in Assumption \ref{ass:main}. For the proof we needed a group theoretical study of the linear group  of a finite product of local rings. This is done in Appendix~\ref{linear-groups}.
 
The concluding Section~\ref{sect:proofs} contains the proof of the results announced in the introduction.

\section{Notations and preliminary results} 
\label{sect:preliminary}
In the following 
\begin{enumerate}[-]
\item local fields will be denoted by $F,K,E,L,\ldots$
\item global fields will be denoted by $\mathbb{F},\mathbb{K},\mathbb{E},\mathbb{L},\ldots$
\end{enumerate}
(It should be noted in particular that $\FF$ denotes a global field rather than a finite one. On the other hand, in accordance with standard notation, the finite field with $q$ elements will be denoted by $\FF_q$. This should not cause any confusion to the reader.)

For a field $\kappa$, $\overline{\kappa}$ denotes an algebraic closure of $\kappa$, and $G_\kappa=\Gal(\overline{\kappa}/\kappa)$ the absolute Galois group of $\kappa$, endowed with the Krull topology. 
\par 
 Let $\Omega$ be a separated topological group and $\rho:G_\kappa\to \Omega$ be a continuous homomorphism; we shall denote by $\kappa(\rho)=\overline{\kappa}^{\ker \rho}$ the {\sl field cut out} by $\rho$. Then $\kappa(\rho)/\kappa$ is a Galois extension whose Galois group is isomorphic to ${\rm Im}(\rho)$. Moreover, since $\ker{\rho}$ is closed,  $G_{\kappa(\rho)}=\ker\rho $.\par
When $\kappa$ is a local or global field and $v$ is a place of $\kappa$, we shall say that $\rho$ is {\sl unramified} (resp. {\sl totally ramified}) at $v$ if $\kappa(\rho)$ is. Note that $\rho$ induces an isomorphism $\Gal(\kappa(\rho)/\kappa)\rightarrow {\rm Im}(\rho)$ which as usual we denote by the same letter~$\rho$. 

From now on we fix a normalized  eigenform $f=\sum_{n}a_nq^n\in S_k(\Gamma_0(\No))$ for the Hecke algebra of weight $k\geq 2$; then $k$ is an even integer.  We recall the relevant definitions. We denote by $\K$ the number field generated by the $a_n$'s and by $\calo$ its integer ring.   A construction due to Shimura and Deligne  associates to $f$ a family of compatible Galois representations: 
$$
\rho_v:G_\Q \longrightarrow \GL_2(\calo_v),\quad v \hbox{ finite place of } \K.
$$
Each $\rho_v$ is characterized by the following two properties  (where we denote by $p_v$ the rational prime divided by $v$):
\begin{multline}
\label{car1}
\hbox{$\rho_v$ is continuous and unramified at primes $\ell$ with $\ell\nmid\No p_v$;}
\end{multline}
\begin{multline}
\label{car2}
\hbox{for $\ell\nmid Np_v$, the characteristic polynomial of the image by $\rho_v$ of}\\ \quad\quad \quad \hbox{a Frobenius element at $\ell$  is  $X^2-a_\ell X+\ell^{k-1}$.}\hfill
\end{multline}

\begin{remark}
\label{rem:chebotarev}
Let $p$ be a prime and $v$ be any place of $\K$ dividing $p$. Then, by~\eqref{car2} and by Chebotarev lemma, the determinant of $\rho_v$ is $\e_p^{k-1}$, where $\e_p\colon G_\Q\rightarrow\Z_p^\times$ is the $p$-adic cyclotomic character.  (Indeed, for a Frobenius element $\Frob_\ell$ with $\ell\nmid\No p$, we have: $\det\big(\rho_v(\Frob_\ell)\big)=\ell^{k-1}=\e_p^{k-1}(\Frob_\ell)$.)
\end{remark}

We let 
$$
\rho=\prod_{v}\rho_v:G_\Q\longrightarrow \GL_2(\widehat\calo),
$$
where $\widehat\calo$ is the profinite completion of $\calo$. For a prime $p$ and $n>1$, we also denote by $\rho(p^n)$ the reduction $G_\Q\to \GL_2(\widehat\calo/p^n\widehat\calo)=\GL_2(\calo/p^n\calo)$ of $\rho$.

\begin{remark}
\label{rem:chebotarev2}
Let $p$ be a prime which does not divide $\No$ and assume that  $p=\prod\limits_{v\mid p}{\mathfrak P}_v^{e_v}$ in $\calo$. Let $n\geq 1$ be an integer. Then\\
\hbox{ }\hskip 2cm
\begin{tikzpicture}[node distance = 1cm, auto]
\node(GQ){$G_\Q$};
\node(ProdGL2) [right of=GQ, node distance = 4cm]{$\prod\limits_v\GL_2(\calo_v)$};
\node(ProdpGL2) [right of=ProdGL2, node distance = 4cm]{$\prod\limits_{v\mid p}\GL_2(\calo_v/{\mathfrak P}_v^{e_vn})$,};
\node(ProdGL2dett) [above of=ProdGL2, node distance = 1.5cm]{$\GL_2(\widehat\calo)$};
\node(ProdpGL2dett) [above of=ProdpGL2, node distance = 1.5cm]{$\GL_2(\calo/p^n\calo)$};
\draw[->] (GQ) to node {$\rho=\prod\limits_v\rho_v$} (ProdGL2);
\draw[->] (ProdGL2) to node {$\pi$} (ProdpGL2);
\draw[double equal sign distance] (ProdGL2) to node  {} (ProdGL2dett);
\draw[double equal sign distance] (ProdpGL2) to node  {} (ProdpGL2dett);
\draw[->, bend left=35] (GQ) to node {$\rho(p^n)$} (ProdpGL2dett);
\end{tikzpicture}~\\[0.3cm]
where $e_v$ is the ramification index at $v$. Thus, by Remark~\ref{rem:chebotarev}, the determinant of $\rho(p^n)$ is $\e_p^{k-1}\!\!\mod p^n$, where $\e_p$ is the $p$-adic cyclotomic character. In particular the determinant takes values in $(\Z/p^n\Z)^\times$.\\
\hbox{ }\hskip 2cm
\begin{tikzpicture}[node distance = 1cm, auto]
\node(GQ){$G_\Q$};
\node(Im) [right of=GQ, node distance = 4cm]{${\rm Im}\rho(p^n)$};
\node(sub) [right of=Im, node distance = 1.2cm]{$\subseteq$};
\node(GL2Opn) [right of=Im, node distance = 2.7cm]{$\GL_2(\calo/p^n\calo)$};
\node(Z/p^nZ) [below of=Im, node distance = 2cm]{$(\Z/p^n\Z)^\times$};
\node(sub) [right of=Z/p^nZ, node distance = 1.2cm]{$\subseteq$};
\node(O/p^nO) [below of=GL2Opn, node distance = 2cm]{$(\calo/p^n\calo)^\times.$};
\draw[->] (GQ) to node {$\rho(p^n)$} (Im);
\draw[->, swap] (GQ) to node {$\e_p^{k-1}\!\!\mod p^n$} (Z/p^nZ);
\draw[->] (Im) to node {{\rm det}} (Z/p^nZ);
\draw[->] (GL2Opn) to node {\rm{det}} (O/p^nO);
\end{tikzpicture}
\end{remark}

We let 
$$
\widehat{G}(\calo/p\calo)=\{\alpha\in \GL_2(\calo/p\calo)\ |\ \det(\alpha)\in(\FF_p^\times)^{k-1}\}.
$$ 
Notice that the image of $\rho(p)$ is clearly contained in $\widehat{G}(\calo/p\calo)$ by Remark~\ref{rem:chebotarev2}.\\

For the reader convenience, we also recall a statement from~\cite[Lemma 2.1]{Habegger2013}.
\begin{lemma}
\label{lem:HabLemma 2.1}
Let $F/\Q_p$ be a finite extension. Let $K$, $L$ Galois extensions of $F$ with $K/F$ totally ramified and $L/F$ finite and unramified. Then the restriction map $G_i(KL/L)\rightarrow G_i(K/F)$ is a well defined isomorphism.
\end{lemma}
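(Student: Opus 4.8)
The plan is to reduce the statement to a one‑line computation with a single, well‑chosen uniformizer, using that a totally ramified extension of a complete discrete valuation field is generated by a root of an Eisenstein polynomial. First I would record the elementary facts about the compositum $KL$. Since $K\cap L$ is at once a subextension of the unramified extension $L/F$ and of the totally ramified extension $K/F$, it must equal $F$; as $K/F$ and $L/F$ are both Galois, this gives $\Gal(KL/F)\cong\Gal(K/F)\times\Gal(L/F)$ and, in particular, that the restriction map $r\colon\Gal(KL/L)\to\Gal(K/F)$ is an isomorphism. Comparing ramification indices, $e(KL/L)=e(KL/F)=e(KL/K)\,e(K/F)\ge[K:F]=[KL:L]\ge e(KL/L)$, whence $e(KL/L)=[KL:L]$ and $e(KL/K)=1$: the extension $KL/L$ is totally ramified, $KL/K$ is unramified, and the normalized valuation $v_{KL}$ restricts to $v_K$ on $K$.

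Next I would produce a uniformizer adapted to both sides. Write $\O_K=\O_F[\pi]$ with $\pi$ a root of an Eisenstein polynomial $g\in\O_F[x]$ of degree $[K:F]$ (possible exactly because $K/F$ is totally ramified). Since $L/F$ is unramified, a uniformizer of $F$ is still a uniformizer of $L$, so $g$ remains Eisenstein over $\O_L$; hence $g$ is irreducible over $L$, $KL=L(\pi)$, $\O_{KL}=\O_L[\pi]$, and $\pi$ is a uniformizer of $KL$ (indeed $v_{KL}(\pi)=v_K(\pi)=1$ by the previous step).

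With this in hand the filtration statement is immediate. For $\sigma\in\Gal(KL/L)$ with image $\bar\sigma=r(\sigma)\in\Gal(K/F)$, computing the Serre indices $i_{M/E}(\tau)=v_M(\tau x-x)$ on the generators $\pi$ of $\O_{KL}/\O_L$ and of $\O_K/\O_F$ gives
\[
i_{KL/L}(\sigma)=v_{KL}(\sigma\pi-\pi)=v_K(\bar\sigma\pi-\pi)=i_{K/F}(\bar\sigma),
\]
using $\sigma\pi=\bar\sigma\pi\in K$ and $v_{KL}|_K=v_K$. Since $G_i$ is cut out by the inequality $i(\cdot)\ge i+1$ (with $G_{-1}$ the whole group), it follows that $\sigma\in G_i(KL/L)$ if and only if $\bar\sigma\in G_i(K/F)$; hence the isomorphism $r$ restricts, for every $i\ge-1$, to a well defined isomorphism $G_i(KL/L)\xrightarrow{\ \sim\ }G_i(K/F)$.

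The only point that needs genuine care is the integrality claim $\O_{KL}=\O_L[\pi]$ — equivalently, that $\pi$ generates the \emph{full} ring of integers of $KL$ over $\O_L$ and not merely an order; this is precisely where the preservation of Eisenstein‑ness under the unramified base change $F\rightsquigarrow L$ is used, via the standard fact that adjoining a root of an Eisenstein polynomial yields the whole valuation ring. One could instead invoke $\O_{KL}=\O_K\otimes_{\O_F}\O_L$ (valid since $L/F$ is unramified and $K\cap L=F$), but the Eisenstein route is self‑contained and simultaneously supplies the uniformizer used in the index computation; everything else is routine bookkeeping with valuations.
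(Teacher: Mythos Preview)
The paper does not give its own proof of this lemma: it merely recalls the statement from \cite[Lemma 2.1]{Habegger2013} and adds a remark. Your argument is the standard one and is correct for finite $K/F$; the Eisenstein uniformizer trick is exactly how one shows $i_{KL/L}(\sigma)=i_{K/F}(\bar\sigma)$, and the preliminary facts ($K\cap L=F$, $KL/L$ totally ramified, $KL/K$ unramified, $v_{KL}|_K=v_K$) are all established cleanly.

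One point to flag: the paper explicitly states, in the remark following the lemma, that the finiteness hypothesis on $K/F$ present in Habegger's original is \emph{unnecessary}, and in Lemma~\ref{lem:twist}(iii) the lemma is indeed invoked in a setting where $K=L(\nu)$ could a priori be infinite. Your Eisenstein argument, as written, requires $[K:F]<\infty$ (you need a single Eisenstein polynomial of degree $[K:F]$). This is not a flaw in the method---the infinite case reduces immediately to the finite one by writing $K$ as a union of finite totally ramified Galois subextensions $K_\alpha/F$ and noting that the ramification filtration on $\Gal(KL/L)=\varprojlim\Gal(K_\alpha L/L)$ is the inverse limit of the filtrations on the finite levels---but since the paper makes a point of dropping the finiteness assumption, you should add a sentence to that effect.
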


\begin{remark}
In ~\cite[Lemma 2.1]{Habegger2013}(iii) $K/F$ is also assumed finite, but this is unnecessary. Moreover in op.cit. it is stated that  $\Gal(KL/L)\cap G_i(KL/F)\simeq G_i(K/F)$, but $\Gal(KL/L)\cap G_i(KL/F)=G_i(KL/L)$ since $L/F$ is unramified.
\end{remark}

\section{Diophantine results.}
\label{sect:dio}
%

We start with a simple result, which is a special case of ~\cite[Theorem 1.5]{AmorosoDavidZannier2014}. Here we allow finite ramification at $p$ in $\F$.
\begin{proposition}
\label{no-ram}
Let $G=\Gal(\F/\Q)$ and let $Z$ be the center.  
We assume that the ramification index of $p$ in $\F$ and the order of a lift of a Frobenius $\sigma$ over $p$ in $G/Z$ are both finite. Then $\F$ has property \B.
\end{proposition}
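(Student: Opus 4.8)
The plan is to exploit the main result of \cite{AmorosoDavidZannier2014}, specifically the Bogomolov statement for Galois extensions whose group has a certain structure, by reducing the hypothesis at hand to the hypotheses of \cite[Theorem 1.5]{AmorosoDavidZannier2014}. The point is that $\F/\Q$ may be ramified at $p$ (unlike in the cleanest versions of such statements), so the first task is to disentangle the ramification. First I would pass to the inertia subgroup $I=I_p\subseteq G$ at a fixed place of $\F$ above $p$; since by hypothesis the ramification index of $p$ in $\F$ is finite, $I$ is a finite group. Let $\F^{I}$ be the corresponding fixed field: then $\F/\F^{I}$ is a finite extension, so by Northcott's theorem it suffices to prove that $\F^{I}$ has property \B, and $p$ is unramified in $\F^{I}$.

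Next I would analyze the decomposition group modulo inertia in $\F^I$: because $p$ is now unramified, a choice of Frobenius $\sigma$ over $p$ generates (topologically) the decomposition group at that place, and the hypothesis says that the image $\bar\sigma$ of $\sigma$ in $(G/Z)$ — equivalently in $(\Gal(\F^I/\Q))/Z'$ for the appropriate central quotient — has finite order, say $m$. The strategy is then a standard dichotomy argument in the style of \cite{AmorosoDvornicich2000} and \cite{AmorosoDavidZannier2014}: given an algebraic $\alpha\in\F^I$ that is not a root of unity, either $\alpha$ is "close to" a conjugate under the local Galois action at $p$, in which case a Frobenius/congruence argument (comparing $\alpha^{p}$ with $\sigma(\alpha)$ modulo a prime above $p$, iterated $m$ times using that $\bar\sigma^m$ is central) forces a lower bound on $h(\alpha)$; or $\alpha$ is far from its conjugates, in which case the "acceleration lemma" of \cite{AmorosoDavidZannier2014} (rather than Habegger's equidistribution) produces the lower bound. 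Concretely, since $p$ is unramified in $\F^I$ and $\bar\sigma$ has finite order, the local degrees at $p$ in $\F^I$ are controlled, which is exactly the input \cite[Theorem 1.5]{AmorosoDavidZannier2014} requires; applying it to $\F^I$ gives property \B{} for $\F^I$, hence for $\F$.

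Actually, the cleanest route is simply to verify that $\F^I$ (or even $\F$ together with its finite ramification) satisfies the hypotheses of \cite[Theorem 1.5]{AmorosoDavidZannier2014} verbatim, and cite it: that theorem is stated to allow a central quotient of bounded behavior together with a Frobenius of finite order in the quotient, which is precisely our assumption once inertia is removed. So the proof reduces to two bookkeeping steps — (i) killing the finite inertia by Northcott, (ii) checking the Frobenius-order hypothesis descends to $\F^I$, since the order of $\bar\sigma$ in $(\Gal(\F^I/\Q))/Z(\Gal(\F^I/\Q))$ is no larger than its order in $G/Z$ — and then one invocation of \cite[Theorem 1.5]{AmorosoDavidZannier2014}.

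The main obstacle I anticipate is step (ii): one must be careful that passing from $G$ to the quotient $\Gal(\F^I/\Q)$ does not enlarge the relevant center or the order of the image of Frobenius in an uncontrolled way, and that the "lift of a Frobenius" is well defined up to the ambiguity that matters (different choices of place above $p$, different lifts of $\sigma$ differing by inertia, which is now trivial in $\F^I$). A secondary technical point is ensuring that the hypothesis of \cite[Theorem 1.5]{AmorosoDavidZannier2014} is literally the one we have — if that theorem is phrased with "$G/Z$ of finite exponent" plus a Frobenius condition, we only need the Frobenius condition here because $\F$ is otherwise arbitrary, so one should double-check that the cited statement is flexible enough, or else extract the relevant special case from its proof. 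Everything else is routine.
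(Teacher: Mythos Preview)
Your reduction step is wrong: property \B{} is \emph{not} inherited upward through finite extensions, so from ``$\F^{I}$ has property \B'' and ``$[\F:\F^{I}]<\infty$'' you cannot conclude that $\F$ has property \B. Northcott's theorem says nothing of the sort; the counterexample is even recalled in the paper: $\Q^{\rm tr}$ has property \B, $\Q(i)$ trivially does, but $\Q^{\rm tr}(i)$ does not. So the very first move --- ``kill the finite inertia by Northcott'' --- collapses. There is also a secondary problem with that move: $I$ is the inertia group at a single chosen place, hence in general not normal in $G$, so $\F^{I}/\Q$ need not be Galois and the phrase ``$p$ is unramified in $\F^{I}$'' (and the subsequent talk of $\Gal(\F^{I}/\Q)$ and its center) does not make sense as written. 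Replacing $I$ by its normal closure does not obviously help, since the normal closure of a finite subgroup of an infinite profinite group need not be finite.

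The paper does not pass to an unramified subfield at all. It works directly in a finite Galois subextension $\F_0\subseteq\F$ containing $\alpha$. Because $\sigma^{r}$ is central in $G$ (hence in $\Gal(\F_0/\Q)$) and $\sigma$ lifts Frobenius, one gets the congruence
\[
|\sigma^{r}(\beta)-\beta^{p^{r}}|_v\leq p^{-1/e}
\]
for every integral $\beta$ and \emph{every} place $v\mid p$ simultaneously; the finite ramification index $e$ only weakens the exponent from $-1$ to $-1/e$. The acceleration lemma \cite[Lemma~2.1]{AmorosoDavidZannier2014} then upgrades this, for a bounded $\lambda$, to
\[
|\sigma^{r}(\beta^{p^{\lambda}})-\beta^{p^{r+\lambda}}|_v\leq p^{-1},
\]
after which \cite[Lemma~2.2]{AmorosoDavidZannier2014} gives the height lower bound (using that $\sigma^{r}(\alpha^{p^{\lambda}})\neq\alpha^{p^{r+\lambda}}$ since $\alpha$ is not a root of unity). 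So the finite ramification is absorbed by the acceleration step, not removed by a field descent.
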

\begin{proof}
Let $e$ and $r$ be respectively the the ramification index and the order of $\sigma$ in $G/Z$. Let $\F_0/\Q$ be a finite Galois extension and $\alpha\in \F_0$. Since $\sigma^r$ lies in the center of $\Gal(\F_0/\Q)$, we have 
$$
\vert\sigma^r(\beta)-\beta^{p^r}\vert_v\leq p^{-1/e}
$$
for any $\beta\in\O_ {\F_0}$ and for any place $v$ dividing $p$. By~\cite[Lemma 2.1]{AmorosoDavidZannier2014}, there exists a positive integer $\lambda$ which is explicitly bounded in terms of $p$ and of $e$ such that 
$$
\vert\sigma^r(\beta^{p^{\lambda}})-\beta^{p^{r+\lambda}}\vert_v\leq p^{-1}
$$
We now apply~\cite[Lemma 2.2]{AmorosoDavidZannier2014}, taking into account that $\sigma^r(\alpha^{p^\lambda})\neq\alpha^{p^{r+\lambda}}$, since otherwise $\alpha$ would be a root of unity.
\end{proof}


Let $\L$ be a (possible infinite) Galois extension. We now make the first assumption on $\L$ which allows to globalise the metric inequality at ramified primes. We follow some ideas from the proofs of 
\cite[Proposition 4.2]{AmorosoDavidZannier2014} and \cite[ Proposition 6.1]{Habegger2013}.
\begin{assumptions}
\label{ass:H1}
There exists $\tau\in G_\QQ\cap I$, where $I$ is an inertia group\footnote{That is, the inverse limit of inertia groups.}  over $p$, such that  \begin{equation}\label{eq:H1}\varepsilon_p(\tau)=g\quad\hbox{with } g\in\ZZ, g>1,\end{equation}
and $\tau|_\L\in  Z(\Gal(\L/\Q))$. 
\end{assumptions}

The second assumption on $\L$ concerns ramification groups. Although of technical nature, it is quite general to be satisfied in several cases, namely  for the extensions cut out by our Galois representations.  Given a finite Galois extension $\E/\Q$ and $i\geq 0$ we recall that the $i$-th ramification group $G_i$ at a place $v$ over $p$ is the subgroup of $\sigma\in\Gal(\E/\Q)$ such that $\vert\sigma(\gamma)-\gamma\vert_v\leq p^{-(i+1)/e}$ for $\gamma\in\O_ \E$. Here $e$ is the ramification index of $\E$ over $p$. The group $G_i$ is the isomorphic image, by restriction, of the corresponding local $i$-th ramification group of the completed extension $\E_v/\QQ_p$. As it is well known, the $i$-th ramification groups at different places of $\E$ over $p$ are conjugate. 

\begin{assumptions}
\label{ass:H2}
There exist $C_1$, $C_2\geq 1$ with the following properties. There exists a sequence of finite Galois extensions $(\L_n/\Q)_{n\geq 0}$ with 
$$
\Q=\L_0\subsetneq \L_1\subsetneq\cdots\subsetneq \L_n\subsetneq\cdots \subsetneq \L,\quad{\rm and}\quad \L=\bigcup_n\L_n.
$$
such that for $n\geq 1$ the extension $\L_n/\Q$ is ramified over $p$. Moreover, let $e_n\geq1$ be the ramification index, $(G^n_i)_i$ the sequence of ramification groups of $\L_n/\Q$ at a place over $p$. We further denote by $H_n=G^n_{i_n}$ the last non trivial ramification group and $\overline{H_n}$ its normal closure in $\Gal(\L_n/\Q)$.
Then 
$$
\frac{e_n}{i_n+1}\leq C_1,\qquad \vert \overline{H_n}\vert\leq C_2
$$
and
$$
\L_n^{\overline{H_n}}=\L_{n-1}.
$$
\end{assumptions}

Even we are not interested in explicit computations, let us remark that the above assumptions  can be weakened in some cases (as for abelian extensions) 
to get sharper lower bounds.

We can now state the main result of this section. We denote by $\mug$ the set of roots of unity and by $\mug_{p^\infty}$ the set of roots of unity of order a power of $p$ in $\Qbar$. 
\begin{proposition}
\label{prop:machine}
Let $\F/\Q$ and $\L/\Q$ two (possibly infinite) Galois extensions. We suppose that:
\begin{enumerate}[-] 
\item $\F$ is unramified at $p$ and it satisfies property \B;
\item $\L$ satisfies Assumptions ~\ref{ass:H1} and~\ref{ass:H2}.
\end{enumerate}
Then $\F\L$ has  property \B{} as well.
\end{proposition}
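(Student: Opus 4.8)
The plan is to prove Proposition~\ref{prop:machine} by a dichotomy argument on a putative algebraic number $\alpha\in\F\L$ with small height, separating the contribution from the unramified field $\F$ and from the (wildly ramified) field $\L$. First I would fix $\alpha\in\F_0\L_0'$ for suitable finite subextensions $\F_0\subseteq\F$ and $\L_0'\subseteq\L$; enlarging, I may assume $\L_0'=\L_n$ for some $n$ in the filtration of Assumption~\ref{ass:H2}. Set $\E_n=\F_0\L_n$, a finite Galois extension of $\Q$. Since $\F$ is unramified at $p$ and $\L$ is the ramified part, the key structural input is that the inertia and ramification behaviour of $\E_n$ at $p$ is governed entirely by $\L_n$: by Lemma~\ref{lem:HabLemma 2.1} (applied with $F$ a suitable unramified base, $K$ the completion of $\L_n$, $L$ the completion of $\F_0$), the ramification groups $G_i(\E_n/\Q)$ at a place over $p$ restrict isomorphically to $G_i(\L_n/\Q)=G^n_i$, so in particular $e(\E_n/p)=e_n$, $i_n$ is unchanged, the last ramification group is (a conjugate copy of) $H_n$, and $\E_n^{\overline{H_n}}=\F_0\L_{n-1}$.

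Next I would run the two metric inequalities. For the ramified direction: using $\tau$ from Assumption~\ref{ass:H1} with $\epsilon_p(\tau)=g>1$, and the fact that $\tau$ acts centrally on $\L$ (hence on $\E_n$ up to a controlled correction coming from $\F_0$, where however $\tau$ is trivial on inertia because $\F$ is unramified), one gets a congruence of the form $|\tau(\beta)-\beta^{g}|_v\leq p^{-1/e_n}$ for $\beta\in\O_{\E_n}$ and $v\mid p$; combined with $e_n/(i_n+1)\leq C_1$ this is exactly the shape needed to feed into the acceleration Lemma~\cite[Lemma 2.1]{AmorosoDavidZannier2014}, producing after raising to a bounded $p$-power a clean inequality $|\tau(\beta^{p^{\lambda}})-\beta^{g p^{\lambda}}|_v\leq p^{-1}$ with $\lambda$ bounded in terms of $p,C_1$ only. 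Then one invokes the height lower bound machinery of~\cite{AmorosoDavidZannier2014} (the analogue of \cite[Lemma 2.2]{AmorosoDavidZannier2014}, or the Proposition~\ref{no-ram}-type argument): either $\alpha$ (or a bounded $p$-power of it) lies in the subfield $\E_n^{\overline{H_n}}=\F_0\L_{n-1}$, in which case we descend and induct on $n$, or $\alpha^{p^\lambda}$ and its $\tau$-conjugate are genuinely distinct and the congruence above forces $h(\alpha)\geq c(p,C_1,C_2)>0$. The role of $|\overline{H_n}|\leq C_2$ is to keep uniform control of the degree of the relevant descent/conjugation step, and the role of $\F$ having property~\B{} is to handle the base case of the induction (when the ramified part has been exhausted, $\alpha$ lands in a finite extension of $\F$, which satisfies~\B{} with a uniform constant).

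Concretely the induction is: a small-height $\alpha\in\E_n$ either already satisfies the desired bound, or descends (after a bounded $p$-power) into $\F_0\L_{n-1}$; iterating, after finitely many steps one reaches $\F_0\L_0=\F_0\subseteq\F$, and property~\B{} for $\F$ closes the argument. The constant produced depends only on $p$, $C_1$, $C_2$ and the Bogomolov constant of $\F$ — crucially not on $n$ or on $\alpha$ — which is what makes the conclusion a genuine property~\B{} statement for $\F\L$.

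The main obstacle I anticipate is the interaction between $\F$ and $\L$ inside $\E_n=\F_0\L_n$: one must verify that adjoining the unramified piece $\F_0$ genuinely does not disturb the ramification filtration or the descent identity $\E_n^{\overline{H_n}}=\F_0\L_{n-1}$, and that the central element $\tau$ still acts in the required way on all of $\E_n$ (it is central on $\L_n$ by Assumption~\ref{ass:H1}, and trivial on inertia in $\F_0$ by unramifiedness, but one must check the commutator of $\tau$ with $\Gal(\E_n/\L_n)$ is harmless — this is where Lemma~\ref{lem:HabLemma 2.1} and the hypothesis that $\F/\Q$ is \emph{Galois} do the work). A secondary technical point is ensuring the accelerated exponent $\lambda$ and all degree bounds in the descent are uniform in $n$; this follows from Assumption~\ref{ass:H2} but needs to be tracked carefully so that no hidden dependence on $n$ creeps into the final constant.
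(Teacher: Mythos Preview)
There is a genuine gap at the heart of your argument: the congruence $|\tau(\beta)-\beta^{g}|_v\leq p^{-1/e_n}$ does not hold. The element $\tau$ lies in the inertia group at $p$, so it acts trivially on the residue field; the only inequality this yields is $|\tau(\beta)-\beta|_v\leq p^{-1/e_n}$ for integral $\beta$. The datum $\epsilon_p(\tau)=g$ describes the action of $\tau$ on $p$-power roots of unity, not on arbitrary algebraic integers, so there is no mechanism producing $\beta^{g}$ on the right-hand side. Consequently the whole dichotomy you build on this congruence (accelerate, then either descend or bound the height) has no foundation.

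In the paper's proof the roles of $\tau$ and of the ramification groups are kept strictly separate, and this separation is the missing idea. One first forms $\beta=\tau(\alpha)/\alpha^{g}$; since $g>1$ and $\alpha\notin\mug$ one has $\beta\notin\mug$ and $h(\beta)\leq(g+1)h(\alpha)$. One then takes the \emph{minimal} $n$ with $\beta\in\F_0\L_n$ (no induction), and the metric inequality comes from an element $\sigma$ of the last ramification group $H_n$ acting on $\beta$, namely $|\sigma(\gamma)-\gamma|_v\leq p^{-(i_n+1)/e_n}$. The purpose of the construction $\beta=\tau(\alpha)/\alpha^{g}$ together with the centrality of $\tau$ is to prove that $\sigma(\beta')/\beta'\notin\mug_{p^\infty}$ for a suitable conjugate $\beta'$; this is essential, because after acceleration one must know $\sigma(\beta')^{p^\lambda}\neq\beta'^{p^\lambda}$ to apply the product formula. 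Finally, the bound $|\overline{H_n}|\leq C_2$ enters not as a ``degree of descent'' but in two precise places: it is the target exponent in the acceleration (one pushes the congruence down to $p^{-|\overline{H'_n}|}$), and it controls the size of the conjugacy class of $\sigma$ in the centralizer/orbit count that converts the local inequality into a global height bound. Your sketch misses both the $\beta$-construction and the $\mug_{p^\infty}$ step, and misidentifies which Galois element carries the metric input.
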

\begin{proof}
Let $I\subseteq\Gal(\Qbar/\Q)$ be an inertia group over $p$,  $\tau\in I$ and $g\in\Z$ as in Assumption~\ref{ass:H1}. 

By definition of inertia group and by Assumption~\ref{ass:H1}, there exists a prime ideal ${\mathfrak P}$ of $\F$ over $p$ such that $\tau_{\vert \F}\in I({\mathfrak P}\vert p)$. Since $\F$ is not ramified at $p$, this last group is trivial, thus $\tau$ is trivial on $\F$. Again by Assumption~\ref{ass:H1}, $\tau|_\L\in Z(\Gal(\L/\Q))$. Thus 
\begin{equation}
\label{eq:center}
\tau|_{\F\L}\in  Z(\Gal(\F\L/\Q)).
\end{equation}

Since $\F$ has property \B, there exists $c_0>0$ such that
\begin{equation}
\label{eq:B}
\forall\beta\in \F^\star\backslash\mug,\quad h(\beta)\geq c_0.
\end{equation}

Let $\alpha\in (\F\L)^\star\backslash\mug$. We define  
\begin{equation}
\label{eq:beta}
\beta=\frac{\tau(\alpha)}{\alpha^g}.
\end{equation}
We remark that $\beta\not\in \mug$. Otherwise $gh(\alpha)=h(\alpha^g)=h(\tau(\alpha))=h(\alpha)$ which fails since $\alpha$ is not a root of unity and since $g>1$. Moreover, $h(\beta)\leq(g+1)h(\alpha)$. Thus it is enough to show that there exists a $c>0$ depending only on $p$, $c_0$, $C_1$, $C_2$ such that $h(\beta)\geq c$.

We fix a finite Galois extension $\F_0/\Q$ such that $\F_0\subseteq \F$ and $\beta\in \F_0\L$. Let $\{\L_n/\Q\}_{n\geq 0}$ as in Assumption~\ref{ass:H2}. We define $n\geq0$ as the smallest integer such that $\beta\in \F_0\L_n$. If $n=0$ then $\beta\in \F$ and $h(\beta)\geq c_0$ by~\eqref{eq:B}, since $\beta\not\in\mug$. Thus we can assume $n\geq1$. 

We now fix a place $v$ of $\F_0\L_n$ over $p$. Since $\F_0$ is not ramified at $p$, the ramification index $e_n$ of $v_{|\L_n}$ coincide with the ramification index of $v$. Let $G^n_i\subseteq\Gal(\L_n/\Q)$ and $G'^n_i\subseteq\Gal(\F_0\L_n/\Q)$ be respectively the $i$-th ramification subgroups at $v_{|\L_n}$ and at $v$.  
The restriction map
$
\Gal(\F_0\L_n/\F_0)\rightarrow \Gal(\L_n/\QQ)
$
 sends $H'_n:=G'^n_{i_n}$ bijectively in $H_n=G^n_{i_n}$, by Lemma~\ref{lem:HabLemma 2.1} applied to the completions at $v$ of the involved fields. Therefore  the normal closure $\overline{H'_n}$ of $H'_n$ in $\Gal(\F_0\L_n/\Q)$ is sent bijectively in the normal closure $\overline{H_n}$ of $H_n$ in $\Gal(\L_n/\Q)$. 
 By Assumption~\ref{ass:H2} we have
\begin{equation}
\label{eq:card}
\frac{e_n}{i_n+1}\leq C_1,\qquad \vert \overline{H'_n}\vert\leq C_2.
\end{equation}
Since $\L_n^{\overline{H_n}}=\L_{n-1}$ by the last claim of Assumption~\ref{ass:H2}, Galois theory shows that 
\begin{equation}
\label{eq:chiusura-normale}
(\F_0\L_n)^{\overline{H'_n}}=\F_0\L_{n-1}.
\end{equation}

Thus $\beta\not\in (\F_0\L_n)^{\overline{H'_n}}$ by minimality of $n$. Since $\overline{H'_n}$ is generated by the conjugates of $G'^n_{i_n}$, by Galois theory 
$\beta\not\in (\F_0\L_n)^{\phi G'^n_{i_n}\phi^{-1}}$ for some $\phi\in \Gal(\F_0\L_n/\Q)$. 
Thus there exists $\sigma\in G'^n_{i_n}$ such that $\phi\sigma\phi^{-1}(\beta)\neq\beta$. We fix $\phi$, $\sigma$ with the above property and we let  $\beta'=\phi^{-1}(\beta)$. Thus $\sigma(\beta')\neq\beta'$. Moreover, if $\tilde\phi$ is a lifting of $\phi$ to $\F\L$, we have $\beta'=\tfrac{\tau(\alpha')}{\alpha'^g}$ with $\alpha'=\tilde\phi^{-1}(\alpha)$, by definition~\eqref{eq:beta} and since the restriction of $\tau$ is in the center of $\Gal(\F\L/\Q)$ ({\it cf.}~\eqref{eq:center}). Note also that $\beta'\in \F_0\L_n$ and $h(\beta')=h(\beta)$ since $\beta'$ is a conjugate of~$\beta$. Summing up, we have found $\sigma\in G'^n_{i_n}$ and $\alpha'\in \F\L$ such that 
\begin{equation}
\label{eq:beta'}
\beta'=\frac{\tau(\alpha')}{\alpha'^g}\quad{\rm satisfies}\quad \frac{\sigma(\beta')}{\beta'}\neq1
\end{equation}
and we need to prove that there exists a $c>0$ depending only on $p$, $c_0$, $C_1$, $C_2$ such that $h(\beta')\geq c$. 
\begin{fact*}
$\tfrac{\sigma(\beta')}{\beta'}\not\in \mug_{p^\infty}$.
\end{fact*}
\begin{proof}
Let us assume by contradiction that $\tfrac{\sigma(\beta')}{\beta'}\in \mug_{p^\infty}$. Let $\tilde\sigma$ be a lift of $\sigma$ to $\F\L$. By~\eqref{eq:beta'} and since $\tau$ is central,  
$$
\frac{\sigma(\beta')}{\beta'}
=\frac{\tilde\sigma\tau(\alpha')}{\tau(\alpha')}
\left(\frac{\tilde\sigma(\alpha'^g)}{\alpha'^g}\right)^{-1}
=\frac{\tau(\eta)}{\eta^g}.
$$
with $\eta=\tilde\sigma(\alpha')/\alpha'\in \F\L$. Hence 
\begin{equation*}
gh(\eta)=h(\eta^g)=h(\tau(\eta))=h(\eta),
\end{equation*} 
which implies $h(\eta)=0$ since $g>1$. Thus $\eta$ is a root of unity. Write $\eta$ as $\eta=\eta_1\eta_2$ with $\eta_1\in \mug_{p^\infty}$ and with $\eta_2$ of order not divisible by $p$. Hence $\tfrac{\tau(\eta_1)}{\eta_1^g}=1$ by~\eqref{eq:H1}. Thus $\tfrac{\sigma(\beta')}{\beta'}=\tfrac{\tau(\eta_2)}{\eta_2^g}$ has order not divisible by $p$. But $\tfrac{\sigma(\beta')}{\beta'}\in \mug_{p^\infty}$ and $\tfrac{\sigma(\beta')}{\beta'}\neq1$ by~\eqref{eq:beta'}. Contradiction.
\phantom\qedhere\end{proof}

By the fact above,
\begin{equation}
\label{eq:tecnica}
\hbox{For any $p$-power $p^\lambda$ we have $\sigma(\beta')^{p^\lambda}\neq\beta'^{p^\lambda}$.}
\end{equation}
Since $\sigma\in G'^n_{i_n}$, for any $\gamma\in\O_ {\F_0\L_n}$ we have
$$
\vert\sigma(\gamma)-\gamma\vert_v\leq p^{-(i_n+1)/e_n}.
$$
By~\cite[Lemma 2.1]{AmorosoDavidZannier2014}, there exists a positive integer $\lambda$ which is explicitly bounded in terms of $p$, $C_1$ and $C_2$, such that
\begin{equation}
\label{metric}
\vert\sigma(\gamma^{p^\lambda})-\gamma^{p^\lambda}\vert_v\leq p^{-\vert \overline{H'_n}\vert}.
\end{equation}
We consider the action of $\Gal(\F_0\L_n/\Q)$ on itself by conjugation and its stabiliser $C(\sigma)=\{\tau\in\Gal(\F_0\L_n/\Q)\;\vert\; \tau\sigma\tau^{-1}=\sigma\}$. Let $\tau\in C(\sigma)$. We use~\eqref{metric} with $\tau^{-1}\gamma$ instead of $\gamma$. Since $\sigma\tau^{-1}=\tau^{-1}\sigma$ and $\vert\tau^{-1}(\star)\vert_w=\vert\star\vert_{\tau(v)}$, we get 
$$
\vert \sigma(\gamma^{p^\lambda})-\gamma^{p^\lambda}\vert_w\leq p^{-\vert \overline{H'_n}\vert}
$$
with $w=\tau(v)$ and thus for any $w$ in the orbit $S$ of $v$ under the the action of $C(\sigma)$ on the places above $p$. Using~\cite[Lemma 1]{AmorosoDvornicich2000} as in the proof of~\cite[Lemma 2.1]{AmorosoDavidZannier2014} we find
$$
\vert\sigma(\beta'^{p^\lambda})-\beta'^{p^\lambda}\vert_w
\leq c(w)\max(1,\vert\sigma(\beta')\vert_w)^{p^\lambda}\max(1,\vert\beta'\vert_w)^{p^\lambda},\; \forall w\in S
$$
with $c(w)=p^{-\vert \overline{H'_n}\vert}$. This last inequality also holds for an arbitrary place $w$ of $\L$ not dividing $p$, with 
$$
c(w)=
\begin{cases}
1,& \hbox{if } w\nmid\infty;\;\\
2,& \hbox{if } w\mid \infty.
\end{cases}
$$
Let $d_n= [\F_0\L_n:\Q]$ and $d_{n,w}=[(\F_0\L_n)_w:\Q_w]$ with $w$ a place of $\F_0\L_n$. Using the Product Formula to $\sigma(\beta')^{p^\lambda}-\beta'^{p^\lambda}$, which is $\neq0$ by~\eqref{eq:tecnica}, as in the proof of~\cite[Lemma 2.1]{AmorosoDavidZannier2014} we get: 
\begin{align*}
0 & = \sum_w\frac{d_{n,w}}{d_n}\log\vert \sigma(\beta'^{p^\lambda})-\beta'^{p^\lambda}\vert_w\\
 & \leq \sum_w\frac{d_{n,w}}{d_n}\left(\log c(w)
 +p^\lambda\log\max\{1,\vert\sigma(\beta')\vert_w\}+p^\lambda\log\max\{1,\vert\beta'\vert_w\}\right)\\
& = \left(\sum_{w\mid\infty}\frac{d_{n,w}}{d_n}\right)\log 2
-\left(\sum_{w\in S}\frac{d_{n,w}}{d_n}\right)\vert \overline{H'_n}\vert\log p + 2p^\lambda h(\beta').
\end{align*}
Since $\sum_{w\mid\infty}\frac{d_{n,w}}{d_n}=1$ and since $d_{n,w}=d_{n,v}$ for any place $w\mid p$ of $\L_n$, and thus in particular for any $w\in S$, we get
\begin{equation}
\label{first-bound}
2p^\lambda h(\beta')\geq \frac{d_{n,v}}{d_n}\vert S\vert\cdot\vert \overline{H'_n}\vert\log p-\log 2.
\end{equation}
To conclude, we need a lower bound for $S$. Note that $\sigma\in \overline{H'_n}\lhd\Gal(\F_0\L_n/\Q)$, thus the orbit $O$ of $\sigma$ is contained in $\overline{H'_n}$. Thus:
$$
\vert C(\sigma)\vert =\frac{d_n}{\vert O\vert} \geq \frac{d_n}{\vert \overline{H'_n}\vert}.
$$
The stabiliser of $v$ under the action of $\Gal(\F_0\L_n/\Q)$ is by definition the decomposition group $D(v\vert p)$ of $v$ over $p$ which has (again by definition) cardinality $d_v$. Thus, 
$$
\vert S\vert 
=\frac{\vert C(\sigma)\vert}{\vert D(v\vert p)\cap C(\sigma)\vert}
\geq \frac{d_n}{d_{n,v}}\vert \overline{H'_n}\vert^{-1}.
$$
From~\eqref{first-bound} we then get $h(\beta')\geq c$ with
\begin{align*}
 c=\frac{\log(p/2)}{2p^\lambda}>0. &
\hfill \qed   
\end{align*} \phantom\qedhere
\end{proof}

As a toy example we deduce from Proposition~\ref{prop:machine} the following statement which generalizes the main result of~\cite{AmorosoDvornicich2000}.
\begin{theorem}
\label{thm:abeliano+}
Let $\F/\Q$ be a Galois extension unramified at $p$ with property \B. Then $\F(\mug_{p^\infty})$ has property \B{} as well.
\end{theorem}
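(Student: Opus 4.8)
The plan is to deduce Theorem~\ref{thm:abeliano+} from Proposition~\ref{prop:machine} by taking $\L=\Q(\mug_{p^\infty})$, so the whole task reduces to verifying that this cyclotomic tower satisfies Assumptions~\ref{ass:H1} and~\ref{ass:H2}, and that $\F$ and $\L$ fit the hypotheses of the proposition. The hypothesis on $\F$ (unramified at $p$, property \B) is exactly what we are given, so only the two assumptions on $\L$ need to be checked. Concretely I would set $\L_n=\Q(\mug_{p^n})$, which gives the required strictly increasing chain $\Q=\L_0\subsetneq\L_1\subsetneq\cdots$ with union $\L$, each $\L_n/\Q$ totally (hence certainly) ramified at $p$ for $n\geq 1$.

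For Assumption~\ref{ass:H1}: the field $\L=\Q(\mug_{p^\infty})$ is abelian over $\Q$, so $Z(\Gal(\L/\Q))=\Gal(\L/\Q)$ and the centrality condition $\tau|_\L\in Z(\Gal(\L/\Q))$ is automatic for every $\tau\in G_\Q$. It then suffices to exhibit $\tau\in G_\Q\cap I$, with $I$ an inertia group over $p$, such that $\epsilon_p(\tau)=g$ is a rational integer $>1$. Since $p$ is totally ramified in each $\Q(\mug_{p^n})$, the inertia group at $p$ in $\Gal(\Q(\mug_{p^\infty})/\Q)$ is all of $\Gal(\Q(\mug_{p^\infty})/\Q)\cong\Z_p^\times$ via $\epsilon_p$; choosing any $\tau$ in an inertia subgroup $I\subseteq G_\Q$ over $p$ whose image under $\epsilon_p$ is a unit $u\in\Z_p^\times$ with $u\equiv g\pmod{p^n}$ — here one just picks $\tau$ mapping to, say, $1+p\in\Z_p^\times$ (so $g=1+p>1$), which is possible since $\epsilon_p$ restricted to inertia is onto $\Z_p^\times$ — gives \eqref{eq:H1}. (One may alternatively take $g=1+p$ directly: any lift to $G_\Q\cap I$ of the element $1+p$ of $\mathrm{Gal}(\Q(\mug_{p^\infty})/\Q)\cong\Z_p^\times$ works, and its restriction to $\L$ is central because $\L/\Q$ is abelian.)

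For Assumption~\ref{ass:H2}: this is the classical computation of the ramification filtration of $\Q_p(\zeta_{p^n})/\Q_p$. One has $e_n=[\Q_p(\zeta_{p^n}):\Q_p]=(p-1)p^{n-1}$, the extension is totally ramified, $\Gal(\Q(\mug_{p^n})/\Q)\cong(\Z/p^n\Z)^\times$ is abelian, and in the lower numbering the ramification groups $G_i^n$ are: $G_0^n=\Gal$, and $G_i^n=1+p^m\Z/p^n\Z$ (the subgroup of $(\Z/p^n\Z)^\times$ of order $p^{n-m}$) for $p^{m-1}\le i\le p^m-1$, $1\le m\le n-1$, with $G_i^n$ trivial for $i\ge p^{n-1}$. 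Hence the last nontrivial ramification group is $H_n=G_{i_n}^n$ with $i_n=p^{n-2}$ (for $n\ge 2$), equal to the order-$p$ subgroup $1+p^{n-1}\Z/p^n\Z$, so $|H_n|=p$; since $\Gal$ is abelian, $\overline{H_n}=H_n$, giving $|\overline{H_n}|\le C_2:=p$, and $\L_n^{\overline{H_n}}=\Q(\mug_{p^{n-1}})=\L_{n-1}$, which is the last displayed identity of Assumption~\ref{ass:H2}. Finally $\dfrac{e_n}{i_n+1}=\dfrac{(p-1)p^{n-1}}{p^{n-2}+1}$, which is bounded as $n\to\infty$ (it tends to $p(p-1)$), so one may take $C_1:=p(p-1)$; the case $n=1$ is handled separately and trivially. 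With both assumptions verified, Proposition~\ref{prop:machine} gives that $\F\L=\F(\mug_{p^\infty})$ has property \B.

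The main obstacle — really the only point requiring care — is pinning down the ramification filtration of the cyclotomic tower in lower numbering and checking that it genuinely satisfies $\L_n^{\overline{H_n}}=\L_{n-1}$ with the last ramification jump in the right place, i.e.\ that $i_n$ grows like $p^{n-2}$ so that $e_n/(i_n+1)$ stays bounded; this is standard (it is in Serre's \emph{Corps locaux}) but must be quoted correctly, and one must be slightly careful with small $n$ (e.g.\ $n=1$, where $\L_1/\Q$ is tamely ramified and $H_1$ is the whole inertia group of order $p-1$, with $i_1=0$, which is still consistent with the bounds). Everything else is a direct substitution into Proposition~\ref{prop:machine}.
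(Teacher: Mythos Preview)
Your approach is exactly the paper's: apply Proposition~\ref{prop:machine} with $\L=\Q(\mug_{p^\infty})$, $\L_n=\Q(\zeta_{p^n})$, noting that Assumption~\ref{ass:H1} is automatic because $\L/\Q$ is abelian and totally ramified at $p$, and verifying Assumption~\ref{ass:H2} from the standard cyclotomic ramification filtration. One small slip: by your own description the group $1+p^{n-1}\Z/p^n\Z$ persists for all $p^{n-2}\le i\le p^{n-1}-1$, so the last nontrivial index is $i_n=p^{n-1}-1$ (not $p^{n-2}$), whence $e_n/(i_n+1)=p-1$ and the paper takes $C_1=C_2=p$; your weaker value of $i_n$ still yields a valid bound, so the argument is unaffected.
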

\begin{proof}
Let $\L=\Q(\mug_{p^\infty})$. Since  $\L/\Q$ is totally ramified at $p$ and abelian, Assumption~\ref{ass:H1} is trivially satisfied. We consider the filtration given by $\L_n=\Q(\zeta_{p^n})$. Then it is easy to see  that $G^n_i=\Gal(\Q(\zeta_{p^n})/\Q(\zeta_{p^j}))$, for $p^{j-1}\leq i\leq p^{j}-1$ (see \cite[Ch. II, \S 10, Ex. 1]{Neukirch1999}). Thus, in the notation of Assumption~\eqref{ass:H2}, $e_n=(p-1)p^{n-1}$,  $i_n=p^{n-1}-1$ and $\overline{H_n}=G_{i_n}^n$. We then have 
$e_n/(i_n+1)\leq p$ and $\vert \overline{H_n}\vert\leq p$. Clearly, $\L_n^{\overline{H_n}}=\L_{n-1}$. The  Assumption~\ref{ass:H2} therefore holds with $C_1=C_2=p$.
\end{proof}

\section{The local representation at $v\mid p$.}
\label{sect:crystalline}
In what follows, we recall some results on crystalline representations. We fix a prime $p$ and an embedding $\Qbar\hookrightarrow \Qbar_p$, which allows to identify $G_{\Q_p}=\Gal(\Qbar_p/\Q_p)$ with a subgroup of $G_\Q$. Note that this is helpful but harmful in our context, as some results may depend on the chosen embedding. To simplify notations and to avoid future conflicts, we simply denote by $\e$ the $p$-adic cyclotomic character $\e_p$. Let $q=p^2$ and denote by $\Q_q$ the unique unramified quadratic extension of~$\Q_p$. Let $v$ be a place of $\K$ over $p$. The two dimensional irreducible crystalline representations of $G_{\QQ_p}$ have been classified in \cite[Proposition 3.1]{Breuil2003}. They are of the form $V_{k',a,\chi}=V_{k',a}\otimes\chi$, where $k'$ is an integer $\geq 2$, $a$ is an element in the maximal ideal of the valuation ring of $\Qbar_p$, and $\chi$ is a crystalline character, that is the product of an unramified character by an integral power of the cyclotomic character. 
The only isomorphisms between them are $V_{k',a,\chi}\simeq V_{k',-a,\chi\mu_{-1}}$, where $\mu_{-1}$ is the unramified quadratic character of $G_{\QQ_p}$. Up to a twist by a power of the cyclotomic character, we can suppose that $\chi$ is unramified. In this case $V_{k',a,\chi}$ has Hodge-Tate weights $(0,k'-1)$, and the $\varphi$-filtered module $D=D_{k',a,\chi}$ associated to $V_{k',a,\chi}$ is $\Qbar_pe_1+\Qbar_p e_2$ with $\varphi$-action given by
\begin{align*} \varphi(e_1) & = p^{k'-1}\chi(p^{-1})^2 e_2\\  
\varphi(e_2) & = -e_1+a \chi(p^{-1})e_2
\end{align*}
and the filtration is given by
e
$$
Fil^i(D)=\begin{cases} D &\hbox{if }  i\leq 0\\
\Qbar_p e_1 &\hbox{if }  1\leq i\leq k'-1\\
0 &\hbox{if } i\geq k'
\end{cases}
$$
In particular, the characteristic polynomial of $\varphi$ on the associated $\varphi$-filtered module is equal to $X^2-aX+\chi(p^{-1})^2p^{k'-1}$.

Let's come back to our representations $\rho_v$, with $v\mid p$. Let as assume $p\nmid \No$ (Assumption~\ref{ass:main} $(P0)$).  
By~\cite[Theorem 1.2.4 (ii)]{Scholl1990}, ${\rho_v}_{|G_{\Q_p}}$ is crystalline and the characteristic polynomial of $\varphi$ on the associated $\varphi$-filtered module is equal to $X^2-a_pX+p^{k-1}$. Moreover, representations associate to modular forms of weight $k$ have Hodge-Tate weight $(0,k-1)$, (see \cite{Faltings1987}). It follows that the filtered module associated to ${\rho_v}_{|G_{\Q_p}}$ is $D_{k',a,\chi}$ with $k'=k$, $a=a_p$, and $\chi(p)=1$ or $-1$. Now assume also $a_p=0$  (Assumption~\ref{ass:main} $(P1)$). Since $V_{k',a,\chi}\simeq V_{k',-a,\chi\mu_{-1}}$, where $\mu_{-1}$ is the unramified quadratic character of $G_{\Q_p}$, in both cases $\rho_v|_{G_{\QQ_p}}$ is isomorphic to $V_{k,0}$. Taking into account the explicit description of $V_{k,0}$ in~\cite[Prop. 3.2]{Breuil2003} we obtain:

\begin{proposition}
\label{prop:crystalline}
Let us assume~\ref{ass:main} $(P0)$ and $(P1)$. Then for every prime $v$ of~$\calo$ above $p$, ${\rho_v}_{|G_{\Q_p}}$ is isomorphic to the crystalline representation
$$
V_{k,0}
\simeq \left (\mathrm{ind}_{G_{\QQ_q}}^{G_{\QQ_p}} \varepsilon_2^{k-1}\right)\otimes \mu_{\sqrt{-1}},
$$
where:\\
- $\varepsilon_2:G_{\QQ_{q}}\to \Z_q^\times$ is one of the two continuous characters characterized by the property that their composition with the injection $\QQ_{q}^\times \hookrightarrow G_{\QQ_{q}}^{ab}$ (given by local class field theory) is trivial on $p$ and is one of the two natural embeddings when restricted to $\ZZ_{q}^\times$;\\
- $\mu_{\sqrt{-1}}$ is one of the two unramified characters of $G_{\QQ_p}$ sending $\Frob_p$ in $\sqrt{-1}$.
\end{proposition}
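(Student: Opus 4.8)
The plan is to pin down $\rho_v|_{G_{\QQ_p}}$ by locating it in Breuil's classification of the two-dimensional crystalline representations of $G_{\QQ_p}$ recalled above, and then to transcribe the explicit description of the representation we land on.

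First I would feed in the $p$-adic Hodge theory input. Since $p\nmid\No$ (Assumption~\ref{ass:main} $(P0)$), by~\cite[Theorem 1.2.4 (ii)]{Scholl1990} the restriction $\rho_v|_{G_{\QQ_p}}$ is crystalline and the characteristic polynomial of the crystalline Frobenius $\varphi$ on the associated $\varphi$-filtered module is $X^2-a_pX+p^{k-1}$, while the Hodge--Tate weights are $(0,k-1)$ by~\cite{Faltings1987}. Imposing $(P1)$, i.e.\ $a_p=0$, this characteristic polynomial becomes $X^2+p^{k-1}$, whose roots have $p$-adic valuation $(k-1)/2$, which is not an integer since $k$ is even. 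I would then observe that this forces irreducibility: a reducible two-dimensional crystalline representation is, up to semisimplification, a sum of two crystalline characters, and the $\varphi$-eigenvalue of a crystalline character has integral valuation (an integral power of $p$, prescribed by its Hodge--Tate weight, times a unit). Hence $\rho_v|_{G_{\QQ_p}}$ is irreducible and therefore appears in the list of~\cite[Proposition 3.1]{Breuil2003}.

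Next I would match parameters. Writing $\rho_v|_{G_{\QQ_p}}\simeq V_{k',a,\chi}$, the fact that the Hodge--Tate weights are $(0,k-1)$ forces $\chi$ to be unramified and $k'=k$; comparing then $X^2-aX+\chi(p^{-1})^2p^{k-1}$ with $X^2+p^{k-1}$ gives $a=0$ and $\chi(p)^2=1$, i.e.\ $\chi(p)=\pm1$. The only isomorphism permitted by the classification, $V_{k',a,\chi}\simeq V_{k',-a,\chi\mu_{-1}}$, becomes $V_{k,0,\chi}\simeq V_{k,0,\chi\mu_{-1}}$ since $a=0$, so the two cases $\chi(p)=1$ and $\chi(p)=-1$ yield isomorphic representations; hence $\rho_v|_{G_{\QQ_p}}\simeq V_{k,0}$, and this does not depend on $v$ because $a=a_p=0$ for every $v\mid p$. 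Finally I would invoke the explicit description of $V_{k,0}$ in~\cite[Prop. 3.2]{Breuil2003}, namely
$$
V_{k,0}\simeq\Big(\mathrm{ind}_{G_{\QQ_q}}^{G_{\QQ_p}}\epsilon_2^{k-1}\Big)\otimes\mu_{\sqrt{-1}}
$$
with $\epsilon_2$ and $\mu_{\sqrt{-1}}$ normalized as in the statement, which completes the proof.

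The step I expect to be the real obstacle is not conceptual but bookkeeping: making sure the normalizations in the various sources are compatible — that Scholl's convention for $\varphi$ is the one under which Breuil's $V_{k',a,\chi}$ has characteristic polynomial $X^2-aX+\chi(p^{-1})^2p^{k-1}$ (arithmetic versus geometric Frobenius, duals, Tate twists), and that the several quadratic ambiguities in play (the twist by $\mu_{-1}$ in $V_{k',a,\chi}\simeq V_{k',-a,\chi\mu_{-1}}$, the two choices of $\epsilon_2$, and the sign of $\sqrt{-1}$ in $\mu_{\sqrt{-1}}$) match up so that a single clean statement like the one above comes out.
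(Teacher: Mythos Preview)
Your proposal is correct and follows essentially the same route as the paper: Scholl for crystallinity and the characteristic polynomial of $\varphi$, Faltings for the Hodge--Tate weights, Breuil's classification to pin down $V_{k,0}$, and finally \cite[Prop.~3.2]{Breuil2003} for the explicit induced description. The one difference is that you insert an explicit irreducibility argument (via the half-integral slopes of $X^2+p^{k-1}$) before invoking Breuil, whereas the paper bypasses this by matching the $\varphi$-filtered module directly to $D_{k,a_p,\chi}$ and then using $V_{k',a,\chi}\simeq V_{k',-a,\chi\mu_{-1}}$; your extra step is a welcome clarification rather than a different method.
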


We also need the following proposition.

\begin{proposition}
\label{prop:zp} 
Let us assume~\ref{ass:main}  $(P0)$-$(P1)$-$(P3)$. 
Let $v$ be a place of $\K$ over~$p$. Then, up to conjugation by an element of $\GL_2(\calo_v)$, the restriction ${\rho_v}_{|G_{\Q_p}}$ takes values in $\GL_2(\Z_p)$.
\end{proposition}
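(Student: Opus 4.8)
The plan is to exploit the explicit description of ${\rho_v}_{|G_{\QQ_p}}$ given by Proposition~\ref{prop:crystalline}, namely that it is isomorphic to $\left(\mathrm{ind}_{G_{\QQ_q}}^{G_{\QQ_p}}\epsilon_2^{k-1}\right)\otimes\mu_{\sqrt{-1}}$, and to show that this two-dimensional representation, after an appropriate choice of basis, lands in $\GL_2(\ZZ_p)$ rather than merely in $\GL_2(\calo_v)$. First I would analyse the induced representation $W:=\mathrm{ind}_{G_{\QQ_q}}^{G_{\QQ_p}}\epsilon_2^{k-1}$. Since $\QQ_q/\QQ_p$ is the unramified quadratic extension, $G_{\QQ_q}$ has index $2$ in $G_{\QQ_p}$; choosing a lift $\phi$ of Frobenius, one has the standard basis $\{e,\phi e\}$ of the induced module, on which $G_{\QQ_q}$ acts diagonally through $\epsilon_2^{k-1}$ and its conjugate $(\epsilon_2^{k-1})^{\phi}$, while $\phi$ itself swaps the two lines up to a scalar ($\phi^2$ acts as $\epsilon_2^{k-1}(\phi^2)$, a unit). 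So in this basis the matrix entries of $W$ lie in the ring generated by the values of $\epsilon_2^{k-1}$ on $\ZZ_q^\times$ together with the value $\epsilon_2^{k-1}(\phi^2)$.

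The key point is then to identify $\det W$ and to track where the coefficients live. By the formula for the determinant of an induced representation, $\det W = \epsilon_2^{k-1}|_{\QQ_p^\times}\cdot\eta$, where $\eta$ is the quadratic character of $\QQ_p$ cutting out $\QQ_q$; restricting $\epsilon_2$ to $\ZZ_p^\times\subset\ZZ_q^\times$ gives the cyclotomic character, so $\det W=\epsilon^{k-1}\cdot\eta$ up to an unramified twist, which takes values in $\ZZ_p^\times$. The substantive step is to show the traces do too: $\tr W(g)=0$ for $g\notin G_{\QQ_q}$, and for $g\in G_{\QQ_q}$ one has $\tr W(g)=\epsilon_2(g)^{k-1}+\epsilon_2^\phi(g)^{k-1}$, which is a symmetric function of a pair of conjugate elements and hence lies in $\ZZ_p$ (indeed in $\ZZ_p^\times$ or $p\ZZ_p$). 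Having checked that all traces and the determinant of $W\otimes\mu_{\sqrt{-1}}$ lie in $\ZZ_p$ — here the ambiguous $\sqrt{-1}$ from $\mu_{\sqrt{-1}}$ must be absorbed, noting $\mu_{\sqrt{-1}}^2=\mu_{-1}$ is genuinely $\ZZ_p$-valued and the $\sqrt{-1}$ cancels against the $2$-divisibility built into the induced piece, or else one argues directly with the crystalline module of $V_{k,0}$ whose $\varphi$ has characteristic polynomial $X^2+p^{k-1}$ over $\ZZ_p$ — one concludes that the representation ${\rho_v}_{|G_{\QQ_p}}$ has $\ZZ_p$-valued character.

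Finally I would invoke a descent-of-coefficients argument: a continuous two-dimensional representation over a local field (here $\calo_v\otimes\QQ_p=\K_v$) whose character takes values in $\ZZ_p$ and which is residually irreducible can be conjugated, by an element of $\GL_2$ over the larger ring, to a representation over $\ZZ_p$. Residual irreducibility of $\bar\rho_v$ is exactly where Assumption~\ref{ass:main}$(P3)$ enters: the residual representation is (a twist of) $\mathrm{ind}\,\bar\epsilon_2^{\,k-1}$, and the conditions $p\geq 5$, $p\nmid k-1$, $\tfrac{p+1}{2}\nmid k-1$ guarantee that $\bar\epsilon_2^{\,k-1}$ is not fixed by the Frobenius conjugation, so the induced residual representation is irreducible; then a standard lemma (a model of an absolutely irreducible representation over a field is unique up to conjugacy, and lattices/coefficient rings descend) yields the $\GL_2(\calo_v)$-conjugate landing in $\GL_2(\ZZ_p)$. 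I expect the main obstacle to be precisely this last descent step together with the careful bookkeeping of the $\mu_{\sqrt{-1}}$-twist: one must make sure the half-integral-looking scalar does not obstruct $\ZZ_p$-rationality, which is why the argument is cleanest when phrased through the $\varphi$-filtered module $D_{k,0}$, whose $\varphi$ and filtration in the given basis are manifestly defined over $\ZZ_p$ once $a_p=0$, so that $V_{k,0}=V_{\mathrm{cris}}(D_{k,0})$ is automatically defined over $\ZZ_p$.
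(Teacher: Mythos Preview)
Your proposal is correct and follows essentially the same strategy as the paper: show that the traces of ${\rho_v}_{|G_{\QQ_p}}$ lie in $\ZZ_p$, establish residual absolute irreducibility from $(P3)$, and then descend the coefficients. The paper streamlines your trace computation by simply noting that $V_{k,0}$ is already a representation over $\QQ_p$ (so your $\mu_{\sqrt{-1}}$ bookkeeping is unnecessary), cites \cite[Lemma~2.1(1)]{CalegariSardari2021} for the irreducibility (equivalent to your condition $p+1\nmid k-1$), and names the ``standard lemma'' you invoke explicitly as \cite[Theorem~1]{Carayol1991}.
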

\begin{proof}
Let $\mathfrak{P}$ be the prime ideal corresponding to $v$. Denote by $\rho_v(\mathfrak{P})$ the reduction $G_\Q\to \GL_2(\calo_v/\mathfrak{P}\calo_v)=\GL_2(\calo/\mathfrak{P})$ of $\rho_v$. Since $p+1\nmid k-1$ (which holds {\sl a fortiori} since $\frac{p+1}2\nmid k-1$ by Assumption~\ref{ass:main} $(P3)$), \cite[Lemma 2.1 (1)]{CalegariSardari2021}) ensures that 
the restriction of $\rho_v(\mathfrak{P})$ to a decomposition group\footnote{By Proposition~\ref{prop:crystalline} this restriction does not depend on the decomposition group but only on $p$.} at $p$ is absolutely irreducible, so that $\rho_v$ {\sl a fortiori} is absolutely irreducible. By Proposition~\ref{prop:crystalline}, the representations ${\rho_v}_{|G_{\Q_p}}\simeq V(k,0)$ take values in $\GL_2(\QQ_p)$, so that the traces lie in $\ZZ_p$.  By \cite[Theorem 1]{Carayol1991}, up to conjugation by an element of $\GL_2(\calo_v)$, the restriction ${\rho_v}_{|G_{\Q_p}}$ takes values in $\GL_2(\ZZ_p)$.
\end{proof}

\begin{remark} From the proof of Proposition~\ref{prop:crystalline} we see that $\ker{\rho_v}_{|G_{\Q_p}}$ does not depend on $v|p$. 
\end{remark}

Let $p$ be a rational prime satisfying Assumptions~\ref{ass:main} $(P0)$, $(P1)$. By Proposition~\ref{prop:crystalline}, for every prime $v$ of $\calo$ above $p$, ${\rho_v}_{|G_{\Q_p}}$ is a twist by an unramified character of the induced representation of a power of a character arising from the Tate module of a Lubin-Tate formal group over $\ZZ_q$ (see \cite[Théorème I.2.1]{Colmez1993}). This is exactly the same situation occuring for the representation of $G_{\Q_p}$ on the $p$-adic Tate module of an elliptic curve $\El$ over $\Q$ having $p$ as a supersingular prime, and this is the key point of the fundamental Galois theoretic analysis of the $p$-power torsion points on $\El$ in~\cite[Lemma 3.3]{Habegger2013}. The proof of this lemma rests on the theory of Lubin-Tate modules and its relation to local class field theory and thus only depends on the local $p$-adic representation. We provide here a reformulation suitable for our situation, without any reference to the theory of elliptic curves.\\

We first recall some simple facts concerning twists of representations. Let $L$ be a local field, $A$ be an abelian group, $\nu,\omega:G_L\to A$ be homomorphisms. Since $\ker\nu\cap\ker \omega =\ker\nu\omega \cap\ker \omega$, we know that 
$$
L\subseteq L(\nu), L(\omega), L(\nu\omega)\subseteq L(\nu)L(\omega)=L(\nu\omega)L(\omega).
$$
\begin{lemma}
\label{lem:characters}
Assume that $\mathrm{im}(\omega)\subseteq \mathrm{im}(\nu)$ and the extensions $L(\nu)/L$, $L(\omega)/L$ are linearly disjoint; then $\mathrm{im}(\nu\omega)=\mathrm{im}(\nu)$ and 
the restriction 
\begin{equation}
\label{eq:nuomega} 
\Gal(L(\nu)L(\omega)/L(\nu\omega))\rightarrow\Gal(L(\omega)/L)
\end{equation}
is an isomorphism.
\end{lemma}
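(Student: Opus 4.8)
The plan is to treat both claims as consequences of a single Galois-theoretic diagram, using the hypothesis of linear disjointness to split $\Gal(L(\nu)L(\omega)/L)$ as a direct product and the containment $\mathrm{im}(\omega)\subseteq\mathrm{im}(\nu)$ to pin down where $L(\nu\omega)$ sits inside this product. First I would record the abstract picture: set $M=L(\nu)L(\omega)$, write $\Gamma=\Gal(M/L)$, and note that $\Gamma$ is abelian (both $L(\nu)$ and $L(\omega)$ are abelian over $L$ since $A$ is abelian, and a compositum of abelian extensions is abelian), so every intermediate field is normal over $L$ and corresponds to a subgroup. Linear disjointness of $L(\nu)/L$ and $L(\omega)/L$ gives that restriction induces an isomorphism
$$
\Gamma \;\xrightarrow{\ \sim\ }\; \Gal(L(\nu)/L)\times\Gal(L(\omega)/L)\;\cong\;\mathrm{im}(\nu)\times\mathrm{im}(\omega),
$$
where the last identification sends $g\mapsto(\nu(g),\omega(g))$ for $g\in\Gamma$ regarded (via any lift) as an element of $G_L$; this is well defined because $\ker\nu\cap\ker\omega=\Gal(\overline L/M)$ fixes $M$.

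Next I would locate $L(\nu\omega)$. Under the above isomorphism, $\Gal(M/L(\nu\omega))$ is the subgroup of $g\in\Gamma$ with $\nu(g)\omega(g)=1$, i.e. the graph-type subgroup $\{(x,x^{-1}) : x\in\mathrm{im}(\omega)\}$ once we use $\mathrm{im}(\omega)\subseteq\mathrm{im}(\nu)$ to view both coordinates inside $\mathrm{im}(\nu)$. From this description two things are immediate. First, the projection of $\Gamma$ onto its first factor $\mathrm{im}(\nu)$ restricted to $\Gal(M/L(\nu\omega))$ is surjective onto $\mathrm{im}(\omega)$ and not onto $\mathrm{im}(\nu)$ in general, but the point is that $\nu\omega$ as a homomorphism $\Gamma\to A$ has image equal to the image of $(x,y)\mapsto xy$ on $\mathrm{im}(\nu)\times\mathrm{im}(\omega)$, which is all of $\mathrm{im}(\nu)$ because $\mathrm{im}(\omega)\subseteq\mathrm{im}(\nu)$ (already $x$ alone runs over $\mathrm{im}(\nu)$ with $y=1$); hence $\mathrm{im}(\nu\omega)=\mathrm{im}(\nu)$. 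Second, $\Gal(M/L(\nu\omega))\cap\Gal(M/L(\omega))$ corresponds to $\{(x,x^{-1}):x\in\mathrm{im}(\omega)\}\cap(\mathrm{im}(\nu)\times\{1\})=\{(1,1)\}$, so $L(\nu\omega)$ and $L(\omega)$ are linearly disjoint over $L$, while their compositum is $M$ (indeed $L(\nu\omega)L(\omega)\supseteq L(\nu\omega)L(\omega)=$ the field fixed by $\{(x,x^{-1})\}\cap(\mathrm{im}(\nu)\times\{1\})=M$, using that the compositum corresponds to the intersection of the subgroups).

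Finally, the map \eqref{eq:nuomega} is exactly the restriction map $\Gal(M/L(\nu\omega))\to\Gal(M/L)\to\Gal(L(\omega)/L)$, which by the previous paragraph has trivial kernel ($\Gal(M/L(\nu\omega))\cap\Gal(M/L(\omega))=1$) and, comparing orders (all groups here are finite, or one argues with the profinite cardinalities), is surjective because $\lvert\Gal(M/L(\nu\omega))\rvert=\lvert\mathrm{im}(\omega)\rvert=\lvert\Gal(L(\omega)/L)\rvert$; alternatively surjectivity follows since $L(\nu\omega)L(\omega)=M$ forces the restriction $\Gal(M/L(\nu\omega))\to\Gal(L(\omega)/L(\nu\omega)\cap L(\omega))=\Gal(L(\omega)/L)$ to be onto. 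This gives the asserted isomorphism. The only mild subtlety — and the place to be careful rather than a genuine obstacle — is keeping the identifications $\Gal(L(\nu)/L)\cong\mathrm{im}(\nu)$ etc. compatible when passing to the compositum, i.e. checking that the canonical restriction $\Gamma\to\mathrm{im}(\nu)\times\mathrm{im}(\omega)$ really is $g\mapsto(\nu(g),\omega(g))$; this is where linear disjointness is used, and everything else is bookkeeping with subgroups of a finite abelian group.
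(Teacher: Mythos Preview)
Your argument is correct and follows essentially the same route as the paper: both proofs use linear disjointness to identify $\Gal(L(\nu)L(\omega)/L)$ with $\mathrm{im}(\nu)\times\mathrm{im}(\omega)$, then read off $\mathrm{im}(\nu\omega)=\mathrm{im}(\nu)$ and the isomorphism \eqref{eq:nuomega} from this product structure (the paper constructs a preimage of $\tau$ explicitly, while you identify $\Gal(M/L(\nu\omega))$ with the anti-diagonal $\{(x,x^{-1}):x\in\mathrm{im}(\omega)\}$ and project, which is the same computation). One small slip: from $\Gal(M/L(\nu\omega))\cap\Gal(M/L(\omega))=\{1\}$ you conclude ``$L(\nu\omega)$ and $L(\omega)$ are linearly disjoint over $L$'', but that intersection only gives $L(\nu\omega)L(\omega)=M$; linear disjointness (i.e.\ $L(\nu\omega)\cap L(\omega)=L$) would require the two subgroups to \emph{generate} $\Gamma$, which you did not check --- this does not affect your main argument, since surjectivity already follows directly from your anti-diagonal description (projection to the second factor is onto $\mathrm{im}(\omega)$), without appealing to cardinalities or to the ``alternative'' route.
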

\begin{proof}
Since the two extensions $L(\nu)/L$ and $L(\omega)/L$ are linearly disjoint, the product of restrictions induces an isomorphism
\begin{equation}
\label{eq:nuomega2} 
\Gal(L(\nu)L(\omega)/L)\simeq  \Gal(L(\nu)/L)\times \Gal(L(\omega)/L).
\end{equation}
This formula implies that the image of $\nu$ restricted to $\Gal(\overline{L}/L(\omega))=\ker \omega$ coincides with $\mathrm{im}(\nu)$. Since $\nu=\nu\omega$ over $\ker\omega$, this prove that $\mathrm{im}(\nu\omega)=\mathrm{im}(\nu)$. We now prove the second assertion.
The restriction~\eqref{eq:nuomega} is clearly injective. Let $\tau\in \Gal(L(\omega)/L))$. Since $\mathrm{im}(\omega)\subseteq \mathrm{im}(\nu)$, there exists an element $\sigma\in \Gal(L(\nu)L(\omega)/L)$ such that $\nu(\sigma)=\omega(\tau)^{-1}$ and $\sigma$ restricts to $\tau$ in $\Gal(L(\omega)/L)$. Then $\nu(\sigma)\omega(\sigma)=1$ so that $\sigma\in \Gal(L(\nu)L(\omega)/L(\nu\omega))$. 
\end{proof}


\begin{lemma}
\label{lem:twist} Assume that $\mathrm{im}(\omega)\subseteq \mathrm{im}(\nu)$.
If $\nu$ is totally ramified and $\omega$ unramified with finite image, then
\begin{enumerate}[(i)]
\item $\mathrm{im}(\nu\omega)=\mathrm{im}(\nu)$;
\item $\nu\omega$ is totally ramified.
\item For $i\geq 0$ let $G_i$ denote the $i$-th higher ramification group. Then $$G_i(L(\nu\omega)/L)\simeq G_i(L(\nu)/L).$$
In particular
$$
\Gal(L(\nu\omega)/L)\simeq \Gal(L(\nu)/L).
$$
\end{enumerate}
\end{lemma}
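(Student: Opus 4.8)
\textbf{Proof plan for Lemma~\ref{lem:twist}.}

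The plan is to reduce everything to the previous lemma. Since $\omega$ is unramified, $L(\omega)/L$ is an unramified (in particular Galois) extension, and since $\omega$ has finite image this extension is finite. Since $\nu$ is totally ramified, $L(\nu)/L$ is totally ramified. A totally ramified extension and an unramified extension of a local field are automatically linearly disjoint: the inertia degree is multiplicative in towers, so $[L(\nu)L(\omega):L(\nu)]$ is at most $[L(\omega):L]$ and at least the inertia degree of $L(\nu)L(\omega)/L(\nu)$, which equals $[L(\omega):L]$ because $L(\omega)/L$ is already unramified and $L(\nu)/L$ contributes no residue extension. Hence $L(\nu)/L$ and $L(\omega)/L$ are linearly disjoint, and the hypotheses of Lemma~\ref{lem:characters} are met. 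That lemma immediately gives $\mathrm{im}(\nu\omega)=\mathrm{im}(\nu)$, which is part~(i), and gives that the restriction
$$
\Gal(L(\nu)L(\omega)/L(\nu\omega))\longrightarrow\Gal(L(\omega)/L)
$$
is an isomorphism.

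For part~(ii), I would argue that $L(\nu\omega)/L$ is totally ramified by comparing inertia degrees. From the isomorphism~\eqref{eq:nuomega2} we have $[L(\nu)L(\omega):L]=[L(\nu):L]\cdot[L(\omega):L]$, and $L(\nu)L(\omega)/L(\nu)$ is unramified of degree $[L(\omega):L]$ (being the compositum of $L(\nu)$ with the unramified $L(\omega)$). So the maximal unramified subextension of $L(\nu)L(\omega)/L$ has degree exactly $[L(\omega):L]$ and equals $L(\omega)$. Since $L(\nu\omega)\subseteq L(\nu)L(\omega)$ and $\Gal(L(\nu)L(\omega)/L(\nu\omega))\simeq\Gal(L(\omega)/L)$ has order $[L(\omega):L]$, we get $[L(\nu\omega):L]=[L(\nu):L]$; and since $L(\nu\omega)\cap L(\omega)$ corresponds under the product decomposition to a subextension whose degree divides $[L(\omega):L]$, the only way the degrees are compatible is $L(\nu\omega)\cap L(\omega)=L$, forcing $L(\nu\omega)/L$ to have trivial residue extension, i.e.\ to be totally ramified. (Alternatively: the inertia subgroup of $\Gal(L(\nu)L(\omega)/L)$ is exactly $\Gal(L(\nu)L(\omega)/L(\omega))$, which is mapped isomorphically onto $\Gal(L(\nu)/L)$ under the other projection, and $\Gal(L(\nu)L(\omega)/L(\nu\omega))$ projects isomorphically onto the complementary factor; these two subgroups intersect trivially and together generate, so $\Gal(L(\nu\omega)/L)$ is identified with the inertia quotient-image, i.e.\ with $\Gal(L(\nu)/L)$, which is all of its own inertia.)

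Part~(iii) is the heart of the matter. Apply Lemma~\ref{lem:HabLemma 2.1} to the local fields $L$, $K=L(\nu\omega)$ (totally ramified over $L$ by part~(ii)) and the finite unramified extension $L(\omega)/L$: the restriction $G_i\big(L(\nu\omega)L(\omega)/L(\omega)\big)\to G_i\big(L(\nu\omega)/L\big)$ is an isomorphism for all $i\ge0$. But $L(\nu\omega)L(\omega)=L(\nu)L(\omega)$, so by the same token applied with $K=L(\nu)$ we also get $G_i\big(L(\nu)L(\omega)/L(\omega)\big)\to G_i\big(L(\nu)/L\big)$ an isomorphism. Chaining these two isomorphisms through the common group $G_i\big(L(\nu)L(\omega)/L(\omega)\big)$ yields $G_i\big(L(\nu\omega)/L\big)\simeq G_i\big(L(\nu)/L\big)$, and the case $i=-1$ (or $i=0$) gives the Galois group statement $\Gal(L(\nu\omega)/L)\simeq\Gal(L(\nu)/L)$. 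The only subtlety I expect is bookkeeping the linear disjointness and the identification $L(\nu\omega)L(\omega)=L(\nu)L(\omega)$ carefully — that equality is exactly the displayed containment $L\subseteq L(\nu),L(\omega),L(\nu\omega)\subseteq L(\nu)L(\omega)=L(\nu\omega)L(\omega)$ recorded just before Lemma~\ref{lem:characters}, so no new work is needed there. The main obstacle is making sure Lemma~\ref{lem:HabLemma 2.1} applies with $K$ infinite if $\nu$ has infinite image, but the remark after that lemma explicitly removes the finiteness hypothesis on $K/F$, so this is fine.
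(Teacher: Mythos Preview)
Your proposal is correct and follows essentially the same route as the paper: linear disjointness plus Lemma~\ref{lem:characters} for~(i), then two applications of Lemma~\ref{lem:HabLemma 2.1} (once with $K=L(\nu)$, once with $K=L(\nu\omega)$, both over the unramified base $L(\omega)$) chained through $G_i(E/L(\omega))$ for~(iii). The only cosmetic difference is in~(ii): the paper bounds the residual degree $f(E/L)$ from above via the right leg of the diagram and from below via the left leg to force $f(E/L)=[E:L(\nu\omega)]$, whereas you deduce $L(\nu\omega)\cap L(\omega)=L$ from $[E:L(\nu\omega)]=[L(\omega):L]$ and then conclude that the maximal unramified subextension of $L(\nu\omega)/L$ is trivial; both arguments are short and equivalent.
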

\begin{proof}
A diagram may be of help
\begin{center}
\[
  \begin{tikzpicture}[node distance = 3cm, auto]
  \node (F) {$L$};
  \node (Fnuomega) [above of=F, left of=F] {$L(\nu\omega)$};
  \node (Fomega) [above of=F, node distance = 2cm] {$L(\omega)$};
  \node (Fnu) [above of=F, right of=F] {$L(\nu)$};
  \node (E) [above of=F, node distance = 6cm] {\hskip 5cm$E:=L(\nu\omega)L(\omega)=L(\nu)L(\omega)$};
  \draw[-] (F) to node {} (Fnuomega);
  \draw[-] (F) to node [swap] {\vbox{\vskip -0.3cm\hbox{\small finite}\vskip -0.2cm\hbox{\small unr}}} (Fomega);
  \draw[-] (F) to node [swap] {\hbox{\small tot. ram.}} (Fnu);
  \draw[-] (Fnuomega) to node{\vbox{\vskip -0.3cm\hbox{\small finite}\vskip -0.2cm\hbox{\small unr}}}  (E);
  \draw[-] (Fomega) to node {} (E);
  \draw[-] (Fnu) to node [swap]{\vbox{\vskip -0.3cm\hbox{\small finite}\vskip -0.2cm\hbox{\small unr}}}  (E);
  \end{tikzpicture}
\]
\end{center}
Here the two extensions on the left top and right top are unramified by \cite[Ch. II, Prop. 7.2]{Neukirch1999}.\\

Let us prove $(i)$ and $(ii)$. Since $\nu$ is totally ramified and $\omega$ is unramified, the two extensions $L(\nu)/L$ and $L(\omega)/L$ are linearly disjoint. Lemma~\ref{lem:characters} then shows that $\im(\nu\omega)=\im(\nu)$ and 
\begin{equation}\label{eq:gradi} 
[E:L(\nu\omega)]=[L(\omega):L].
\end{equation}
Denoting by $f$ the residual degree, we then have:
$$
f(E/L)=f(E/L(\nu))=[E:L(\nu)]\leq [L(\omega):L]
$$
by looking to the right side of the diagram. Moreover, by looking to the left part of the diagram
$$
f(E/L)\geq f(E/L(\nu\omega))=[E:L(\nu\omega)]=[L(\omega):L]
$$
where the last equality comes from~\eqref{eq:gradi}. Thus $f(E/L)=[E:L(\nu\omega)]$ and then 
$L(\nu\omega)/L$ is totally ramified.\\

We now prove $(iii)$. The second assertion follows from the first one for $i=0$. For the first, using (i) and Lemma~\ref{lem:HabLemma 2.1} twice (with $L=L(\omega)$ and with $K=L(\nu)$ the first time, $K=L(\nu\omega)$ the second  time) we get
$$
G_i(L(\nu)/L)=G_i(E/L(\omega))=G_i(L(\nu\omega)/L).
$$
\end{proof}

We are now ready to completely describe the  ramification of the local representation $\rho_p|_{G_{\Q_p}}$, as in \cite[Lemma 3.3]{Habegger2013}.
Recall that $q=p^2$. We put:
$$
\delta=\frac{q-1}{(q-1,k-1)}. 
$$
Notice that $\delta\not=1$ because $k$ is even.
\begin{lemma}\label{prop:LT} Let $n\geq 1$. We denote by $\varepsilon_{2,n}$ the composition of $\varepsilon_2$ with the reduction modulo~$p^n$. 
\begin{enumerate}[(i)]
\item The extension $\Q_q(\varepsilon_{2,n}^{k-1})/\Q_q$ is totally ramified and abelian of degree $\delta q^{n-1}$, and
$$
\Gal(\Q_q(\varepsilon_{2,n}^{k-1})/\Q_q)\simeq 
\{g^{k-1}\,\vert\, g\in(\Z_q/p^n\Z_q)^\times\}
\simeq \Z/\delta\Z\times \left(\Z/p^{n-1}\Z\right)^2.
$$ 
\item Let $j$ and $i$ be integers with $1\leq j\leq n$ and $q^{j-1}\leq (q-1,k-1)i \leq q^j-1$. The higher
ramification groups are given by
$$G_i(\Q_q(\varepsilon_{2,n}^{k-1})/\Q_q)) = \Gal(\Q_q(\varepsilon_{2,n}^{k-1})/\Q_q(\varepsilon_{2,j}^{k-1})).
$$
\end{enumerate}
\end{lemma}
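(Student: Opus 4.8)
The approach is to recognise $\epsilon_2$ as a Lubin--Tate character and then reduce everything to the standard ramification theory of Lubin--Tate extensions, with one extra step to account for the $(k-1)$-st power. By \cite[Théorème I.2.1]{Colmez1993}, $\epsilon_2$ is --- up to composition with the Frobenius automorphism of $\Z_q$, which changes neither $\ker\epsilon_2$ nor $\ker\epsilon_2^{k-1}$ --- the character attached to a Lubin--Tate formal $\Z_q$-module for the uniformiser $p$. Hence $F_n:=\Q_q(\epsilon_{2,n})$ is the level-$n$ Lubin--Tate extension of $\Q_q$ at $p$, and I would quote the classical facts (this is exactly the local computation behind \cite[Lemma 3.3]{Habegger2013}, carried out there for the Tate module at a supersingular prime, i.e. for the Lubin--Tate module over $\Z_{p^2}$): via the Artin map, $\Gal(F_n/\Q_q)\cong(\Z_q/p^n\Z_q)^\times$, the extension $F_n/\Q_q$ is abelian and totally ramified of degree $(q-1)q^{n-1}$, and in the \emph{upper} numbering its ramification filtration corresponds, under this identification, to the filtration of $(\Z_q/p^n\Z_q)^\times$ by the subgroups $U^{(m)}:=(1+p^m\Z_q)/(1+p^n\Z_q)$; equivalently $G^{(v)}(F_n/\Q_q)=\Gal(F_n/\Q_q(\epsilon_{2,m}))$ for $m-1<v\le m$ (with $1\le m\le n-1$) and $G^{(0)}(F_n/\Q_q)=\Gal(F_n/\Q_q)$.

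For part (i) I would use the standard structure $(\Z_q/p^n\Z_q)^\times\cong\mu_{q-1}\times U^{(1)}$ (Teichm\"uller splitting) together with $U^{(1)}\cong(\Z/p^{n-1}\Z)^2$, valid because $p\ge 5$ is odd and $\Z_q=W(\F_q)$ is free of rank $2$ over $\Z_p$ ($p$-adic logarithm). Since $p\nmid k-1$ by Assumption~\ref{ass:main}$(P3)$, the $(k-1)$-st power map is bijective on the pro-$p$ part $U^{(1)}$ and has image of order $\delta$ on $\mu_{q-1}$; therefore $\Gal(\Q_q(\epsilon_{2,n}^{k-1})/\Q_q)$, which is the image of $\epsilon_{2,n}^{k-1}$, is isomorphic to $\Z/\delta\Z\times(\Z/p^{n-1}\Z)^2$ of order $\delta q^{n-1}$, and $\Q_q(\epsilon_{2,n}^{k-1})\subseteq F_n$ is abelian and totally ramified over $\Q_q$ (a subextension of one that is).

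For part (ii), write $F'=\Q_q(\epsilon_{2,n}^{k-1})$ and $N=\Gal(F_n/F')$. Under the identification above $N$ is the $(k-1)$-torsion of $(\Z_q/p^n\Z_q)^\times$, i.e. $\mu_{(q-1,k-1)}\subseteq\mu_{q-1}$, which meets every $U^{(m)}$ with $m\ge1$ trivially for order reasons; likewise $\Gal(F_n/\Q_q(\epsilon_{2,m}^{k-1}))=\mu_{(q-1,k-1)}\cdot U^{(m)}$, so $\Gal(F'/\Q_q(\epsilon_{2,m}^{k-1}))$ is the image of $U^{(m)}$ in $\Gal(F'/\Q_q)$. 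Herbrand's theorem $G^{(v)}(F'/\Q_q)=G^{(v)}(F_n/\Q_q)N/N$ (cf. \cite{Neukirch1999}) then gives, in the upper numbering, $G^{(v)}(F'/\Q_q)=\Gal(F'/\Q_q(\epsilon_{2,m}^{k-1}))$ for $m-1<v\le m$ and $=\Gal(F'/\Q_q)$ for $v=0$. To pass to the lower numbering I would compute $\psi=\psi_{F'/\Q_q}=\phi_{F'/\Q_q}^{-1}$ directly: since $F'/\Q_q$ is totally ramified, $|G_0(F'/\Q_q)|=\delta q^{n-1}$ while $|G^{(v)}(F'/\Q_q)|=|U^{(m)}|=q^{n-m}$ on $(m-1,m)$, so $\psi'(v)=|G_0(F'/\Q_q)|/|G^{(v)}(F'/\Q_q)|=\delta q^{m-1}$ there, and $\psi(j)=\sum_{m=1}^{j}\delta q^{m-1}=\delta\tfrac{q^{j}-1}{q-1}=\tfrac{q^{j}-1}{(q-1,k-1)}$. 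Hence $G_i(F'/\Q_q)=\Gal(F'/\Q_q(\epsilon_{2,j}^{k-1}))$ precisely for $\psi(j-1)<i\le\psi(j)$, i.e. $\tfrac{q^{j-1}-1}{(q-1,k-1)}<i\le\tfrac{q^{j}-1}{(q-1,k-1)}$; for an integer $i$ the left inequality is equivalent to $(q-1,k-1)\,i\ge q^{j-1}$, which is exactly the range in the statement (the case $j=n$ giving the trivial group since $\Q_q(\epsilon_{2,n}^{k-1})=F'$).

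The one genuinely delicate point is the bookkeeping with the two ramification numberings: Herbrand's theorem is clean only in the upper numbering, so one must carry the Lubin--Tate filtration of $F_n/\Q_q$ to $F'/\Q_q$ there and only then invert $\phi$; and one must notice that passing to the subextension drops the order of the bottom ramification group from $(q-1)q^{n-1}$ to $\delta q^{n-1}$, which is precisely what rescales every lower-numbering jump by the factor $(q-1,k-1)^{-1}$ and produces the asserted ranges. Everything else --- the splitting $\mu_{q-1}\times U^{(1)}$, the isomorphism $U^{(1)}\cong(\Z/p^{n-1}\Z)^2$, and bijectivity of $(k-1)$-st powers on the pro-$p$ part --- is routine for $p\ge 5$.
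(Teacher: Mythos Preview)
Your proof is correct and follows the same overall strategy as the paper: identify $\epsilon_2$ with a Lubin--Tate character, read off the case $k=2$ from the standard description of Lubin--Tate ramification, and then handle the $(k-1)$-st power via Herbrand's theorem. The only substantive difference lies in how Herbrand is applied in part~(ii). The paper works entirely in the \emph{lower} numbering: it uses the form of Herbrand's theorem from \cite[Ch.~II, Theorem~10.7]{Neukirch1999} for the quotient $\Q_q(\epsilon_{2,n})\to\Q_q(\epsilon_{2,n}^{k-1})$, computes $i(\sigma)=1$ for each nontrivial $\sigma\in\Gal(\Q_q(\epsilon_{2,n})/\Q_q(\epsilon_{2,n}^{k-1}))$ (because this group has order prime to $p$ and hence meets $G_1$ trivially), and obtains $\eta(r)=r/(q-1,k-1)$ directly. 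You instead pass first to the \emph{upper} numbering, where Herbrand is the clean statement $G^{(v)}(F'/\Q_q)=G^{(v)}(F_n/\Q_q)N/N$, and then invert $\phi_{F'/\Q_q}$ by computing $\psi$ from the orders of the upper ramification groups. Both routes give the same rescaling of the jumps by $(q-1,k-1)^{-1}$; your version is arguably conceptually cleaner (one never has to touch $i(\sigma)$), while the paper's version avoids the extra conversion between numberings.
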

\begin{proof}
By \cite[Théorème I.2.1]{Colmez1993}, $\varepsilon_2$ is up to isogeny the Tate module of a  Lubin–Tate formal group $\mathcal{F}$ over  $\Z_q$. 
Then $\Q_q(\varepsilon_{2,n})=\Q_q(\mathcal{F}(n))$, where  $\mathcal{F}(n)$ is the group of $p^n$-division points of $\mathcal{F}$. Then in the case $k=2$ (i) and (ii)  follow from \cite[Theorem V.5.4]{Neukirch1999} and \cite[Proposition V.6.1]{Neukirch1999}. 

Assume now $k>2$. The extension $\Q_q(\varepsilon_{2,n}^{k-1})/\Q_q$ is the  subextension of $\Q_q(\varepsilon_{2,n})/\Q_q$ cut out by the $(k-1)$-th power of $\varepsilon_{2,n}$. By assertion (i) in the special case $k=2$, and since $p\nmid k-1$ by Assumption~\ref{ass:main} $(P3)$, we see that $\Gal(\Q_q(\varepsilon_{2,n})/\Q_q(\varepsilon_{2,n}^{k-1}))$ is the subgroup of $\Z/(q-1)\Z$ of index $d:=(q-1,k-1)$. Assertion (i) follows.

We now prove (ii). The group $\Gal(\Q_q(\varepsilon_{2,n})/\Q_q(\varepsilon_{2,n}^{k-1}))$ has order $d$. By Herbrand Theorem \cite[Ch. II, Theorem 10.7]{Neukirch1999}, the ramification group $G_r(\Q_q(\varepsilon_{2,n})/\Q_q)$ is mapped onto $G_{\eta(r)}(\Q_q(\varepsilon_{2,n}^{k-1})/\Q_q)$ by the restriction map 
$$
\mathrm{res}\colon \Gal(\Q_q(\varepsilon_{2,n})/\Q_q) \longrightarrow \Gal(\Q_q(\varepsilon_{2,n}^{k-1})/\Q_q).
$$ 
Here ({\it cf.} \cite[Ch. II, Proposition 10.6]{Neukirch1999})
$$
\eta(r)=\frac 1 {d} \sum_{\sigma\in \Gal(\Q_q(\varepsilon_{2,n})/\Q_q(\varepsilon_{2,n}^{k-1}))}\min\{i(\sigma),r+1\}-1
$$
with $i(\sigma)=v(\sigma(x)-x)$ for a generator $x$ of the ring of integers of $\Q_q(\varepsilon_{2,n})$ over $\Q_q(\varepsilon_{2,n})$.
For $\sigma\in \Gal(\Q_q(\varepsilon_{2,n})/\Q_q(\varepsilon_{2,n}^{k-1}))$ we have 
$$
i(\sigma)=
\begin{cases} 
\infty &\hbox{if } \sigma=id\\
1 &\hbox{ if } \sigma\not=id,
\end{cases}
$$
because $G_1(\Q_q(\varepsilon_{2,n})/\Q_q)$ is a $p$-group and $\Gal(\Q_q(\varepsilon_{2,n})/\Q_q(\varepsilon_{2,n}^{k-1}))$ has order dividing $q-1$  
and thus $G_1(\Q_q(\varepsilon_{2,n})/\Q_q)\cap\Gal(\Q_q(\varepsilon_{2,n})/\Q_q(\varepsilon_{2,n}^{k-1}))$ is trivial. Therefore 
$$
\eta(r)=\frac 1 {d}(r+1+(d-1))-1=\frac r {d}. 
$$
It follows that, for $j$ and $i$  integers such that $1\leq j\leq n$ and $q^{j-1}\leq d\,i \leq q^j-1$, 
\begin{align*} 
G_i(\Q_q(\varepsilon_{2,n}^{k-1})/\Q_q)) 
& = \mathrm{res}\left (G_{d\,i}(\Q_q(\varepsilon_{2,n})/\Q_q)\right )\\ 
&= \mathrm{res}\left (\Q_q(\varepsilon_{2,n})/\Q_q(\varepsilon_{2,j})\right )\quad\hbox{(by the case $k=2$)}\\
& = \Gal(\Q_q(\varepsilon_{2,n}^{k-1})/\Q_q(\varepsilon_{2,j}^{k-1})).
\end{align*}
Now (ii) follows from (i).
\end{proof}

\begin{proposition} Let $n\geq 1$.
\label{lem:3.3}
\begin{enumerate}[(i)]
\item The extension $\Q_q(p^n)/\Q_q$ is totally ramified and abelian of degree $\delta q^{n-1}$. Moreover
$$
\Gal\left(\Q_q(p^n)/\Q_q\right)\simeq \Z/\delta\Z\times \left(\Z/p^{n-1}\Z\right)^2
$$
\item Let $j$ and $i$ be integers with $1\leq j\leq n$ and $q^{j-1}\leq (q-1,k-1)i \leq q^j-1$.The higher
ramification groups are given by
$$
G_i(\Q_q(p^n)/\Q_q) = \Gal(\Q_q(p^n)/\Q_q(p^j)).
$$
In particular, the last non trivial higher ramification group has index 
$$
i_n:=\frac{q^{n-1}-1}{(q-1,k-1)}
$$ 
and 
$$
G_{i_n}=
\begin{cases}
(\Z/p\Z)^2 & \hbox{if } n\geq 2\\ 
\Z/\delta \Z & \hbox{if } n=1.
\end{cases}
$$
\item If $M$ is an integer prime to $p$ and $v\mid p$, then the image of ${\rho_v}_{|G_{\Q_q}}$ contains the scalar matrix $\left(\begin{smallmatrix}M^{k-1} &0\\ 0&M^{k-1} \end{smallmatrix}\right)$.
\end{enumerate}
\end{proposition}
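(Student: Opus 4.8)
The plan is to unwind $\rho_v|_{G_{\Q_p}}$ through Proposition~\ref{prop:crystalline} and reduce everything to the description of $\Q_q(\epsilon_{2,n}^{k-1})/\Q_q$ obtained in Lemma~\ref{prop:LT}; here $\Q_q(p^n)$ denotes the field cut out over $\Q_q$ by the reduction mod $p^n$ of $\rho_v|_{G_{\Q_q}}$, which is independent of $v\mid p$ by the Remark following Proposition~\ref{prop:zp}. By Proposition~\ref{prop:crystalline}, $\rho_v|_{G_{\Q_p}}\simeq(\mathrm{ind}_{G_{\Q_q}}^{G_{\Q_p}}\epsilon_2^{k-1})\otimes\mu_{\sqrt{-1}}$; restricting this to the index-two normal subgroup $G_{\Q_q}$ gives $\rho_v|_{G_{\Q_q}}\simeq(\epsilon_2^{k-1}\oplus(\epsilon_2^{k-1})^s)\otimes\mu_{-1}$, where $s\in G_{\Q_p}$ lifts $\Frob_p$, $\psi^s(g):=\psi(s^{-1}gs)$, and $\mu_{-1}:=\mu_{\sqrt{-1}}|_{G_{\Q_q}}$ is the unramified quadratic character of $G_{\Q_q}$ (it sends $\Frob_q$ to $(\sqrt{-1})^2=-1$, which differs from $1$ as $p$ is odd). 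In particular $\rho_v|_{G_{\Q_q}}$ factors through $G_{\Q_q}^{\mathrm{ab}}$ and, writing $\chi:=\epsilon_2^{k-1}\mu_{-1}$, it is isomorphic to $\mathrm{diag}(\chi,\chi^s)$.

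The key claim is that $\Q_q(p^n)=\Q_q(\chi_n)$, where $\chi_n:=\epsilon_{2,n}^{k-1}\mu_{-1}$. Conjugation by $s$ acts on $\epsilon_2$ through the nontrivial $\sigma\in\Gal(\Q_q/\Q_p)$: from the class field theoretic characterisation of $\epsilon_2$ recalled in Proposition~\ref{prop:crystalline} and the functoriality of the reciprocity map under $\sigma$, one sees that $\epsilon_2^s$ is the other of the two ``natural embeddings'' of $\Z_q^\times$, i.e.\ $\epsilon_2^s=\sigma\circ\epsilon_2$; whereas $\mu_{-1}$, being the restriction of a character of $G_{\Q_p}$ with values in $\{\pm1\}\subset\Z_p$, is fixed both under conjugation by $s$ and by $\sigma$. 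Hence $\chi_n^s=\sigma\circ\chi_n$, and since $\sigma$ is an automorphism of $(\Z_q/p^n\Z_q)^\times$ the characters $\chi_n$ and $\chi_n^s$ have the same kernel; so the field cut out by $\mathrm{diag}(\chi_n,\chi_n^s)$ is $\Q_q(\chi_n)$. Next, applying Lemma~\ref{lem:twist} with $\nu=\epsilon_{2,n}^{k-1}$ (totally ramified by Lemma~\ref{prop:LT}(i)) and $\omega=\mu_{-1}$ (unramified, image $\{\pm1\}\subseteq\im(\nu)$ because $k-1$ is odd and $-1=(-1)^{k-1}$), one gets, via Lemma~\ref{lem:HabLemma 2.1}, restriction isomorphisms $\Gal(\Q_q(p^n)/\Q_q)\simeq\Gal(\Q_q(\epsilon_{2,n}^{k-1})/\Q_q)$ and $G_i(\Q_q(p^n)/\Q_q)\simeq G_i(\Q_q(\epsilon_{2,n}^{k-1})/\Q_q)$ for all $i\ge0$, compatibly with the intermediate fields $\Q_q(p^j)\leftrightarrow\Q_q(\epsilon_{2,j}^{k-1})$. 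Thus (i) and the ramification formula in (ii) follow at once from Lemma~\ref{prop:LT}. For the last assertion of (ii), set $d=(q-1,k-1)$: if $n\ge2$ then $d\,i_n=q^{n-1}-1$ lies in $[q^{n-2},q^{n-1}-1]$, so $G_{i_n}=\Gal(\Q_q(\epsilon_{2,n}^{k-1})/\Q_q(\epsilon_{2,n-1}^{k-1}))$, which by Lemma~\ref{prop:LT}(i) is the kernel of $\Z/\delta\Z\times(\Z/p^{n-1}\Z)^2\to\Z/\delta\Z\times(\Z/p^{n-2}\Z)^2$, hence $\simeq(\Z/p\Z)^2$; for $i>i_n$ one has $d\,i\ge q^{n-1}$, forcing $j=n$ and $G_i=1$; and for $n=1$ we have $i_1=0$ and $G_0=\Gal(\Q_q(p^1)/\Q_q)\simeq\Z/\delta\Z$.

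For (iii), I would use that $\rho_v|_{G_{\Q_q}}$ factors through $G_{\Q_q}^{\mathrm{ab}}$ and is isomorphic to $\mathrm{diag}(\chi,\sigma\circ\chi)$: take $M\in\Z$ with $p\nmid M$, so that $M\in\Z_q^\times\subset\Q_q^\times$, and set $g=\mathrm{rec}_{\Q_q}(M)$; normalising $\epsilon_2$ so that $\epsilon_2\circ\mathrm{rec}_{\Q_q}$ is the identity on $\Z_q^\times$ (the other choice being its $\sigma$-conjugate, which is harmless here), we get $\epsilon_2(g)=M$ and $\mu_{-1}(g)=1$ (as $\mu_{-1}$ is unramified), hence $\chi(g)=M^{k-1}$. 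Since $M^{k-1}\in\Z$ is $\sigma$-fixed, $\rho_v(g)|_{G_{\Q_q}}$ is conjugate to $\mathrm{diag}(M^{k-1},M^{k-1})=M^{k-1}I$; but a matrix conjugate to a scalar equals that scalar, so $\rho_v(g)=M^{k-1}I$, proving (iii). The main obstacle is the reduction carried out in the first two paragraphs, above all the identification of the Frobenius conjugate $\epsilon_2^s=\sigma\circ\epsilon_2$ and the resulting fact that $\chi_n$ and $\chi_n^s$ cut out the \emph{same} field, so that $\Q_q(p^n)=\Q_q(\epsilon_{2,n}^{k-1}\mu_{-1})$ and Lemmas~\ref{lem:twist} and~\ref{prop:LT} apply directly; absent this, $\Q_q(p^n)$ would only be known as a compositum of two totally ramified extensions of $\Q_q$, and the precise structure of $\Gal$ and of the ramification filtration would be unclear.
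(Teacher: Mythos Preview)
Your proof is correct and follows essentially the same route as the paper's: restrict the induced representation to $G_{\Q_q}$ via Mackey, identify $\Q_q(p^n)$ with $\Q_q(\epsilon_{2,n}^{k-1}\mu_{-1})$, and then transfer Lemma~\ref{prop:LT} across the unramified twist using Lemma~\ref{lem:twist}. Your justification of $\chi_n^s=\sigma\circ\chi_n$ via functoriality of the reciprocity map makes explicit what the paper compresses into the phrase ``since the kernel is normal,'' and your argument for (iii) via $g=\mathrm{rec}_{\Q_q}(M)$ is a concrete instance of the paper's surjectivity-of-$\nu\omega$ argument.
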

\begin{proof}
By Proposition \ref{prop:crystalline} 
$$
\rho_{v|G_{\Q_q}}\simeq\left (\mathrm{ind}_{G_{\Q_q}}^{G_{\Q_p}} \varepsilon_2^{k-1}\right)\otimes \mu_{\sqrt{-1}}.
$$ 
By Mackey's restriction formula, we see that
\begin{equation}
\label{eq:mackey}
\left (\mathrm{ind}_{G_{\Q_q}}^{G_{\Q_p}} \varepsilon_2^{k-1}\right)_{|G_{\Q_q}}=\varepsilon_2^{k-1} \oplus {\varepsilon'_2}^{k-1},
\end{equation}
where $\varepsilon'_2$ is the character obtained from $\varepsilon_2$ by composing with the algebraic conjugation of $\Q_q/\Q_p$. Let for short $\nu=\varepsilon_{2}^{k-1}$, 
 $\omega= \mu_{\sqrt{-1}|G_{\Q_q}}$ and remark that $\omega$ takes values in $\{\pm1\}$ and thus it is stable by the algebraic conjugation of $\Q_q/\Q_p$. Notice also that $-1\in\im(\nu)=\left(\ZZ_q^\times\right)^{k-1}$, because the weight $k$ is even. Then $\rho_{v|G_{\Q_q}}$ is conjugate to
\begin{equation}
\label{eq:rep}
\begin{aligned}
G_{\Q_{q}}&\longrightarrow \GL_2(\Z_q)\\
\sigma&\to
\left(\begin{smallmatrix}
(\nu\omega)(\sigma)&0\\0&(\nu\omega)'(\sigma)
\end{smallmatrix}\right).
\end{aligned}
\end{equation}
Since the kernel is normal,
\begin{align*}
\ker \rho(p^{n})_{|G_{\Q_{q}}}
&=\{\sigma\in G_{\Q_{q}}\ |\ (\nu\omega)(\sigma)\equiv(\nu\omega)'(\sigma)\equiv 1\pmod{p^n}\}\\
&=\{\sigma\in G_{\Q_{q}}\ |\ (\nu\omega)(\sigma)\equiv 1\pmod{p^n}\}
=\ker(\nu_n\omega_n),
\end{align*}
where we have denoted by $\nu_n,\omega_n$,  the composition of $\nu,\omega$ respectively with the reduction modulo~$p^n$.  
Thus
\begin{equation}
\label{eq:twist}
\Q_q(p^n)=\Q_q(\nu_n\omega_n).
\end{equation}

By its definition, $\nu_n$ is totally ramified; on the other hand $\omega$ is unramified with finite image contained in $\mathrm{im}(\nu)$.
By~\eqref{eq:twist}, by Lemma \ref{lem:twist} (iii) with $L=\Q_q$ and with $\nu$, $\omega$ replaced by $\nu_n$, $\omega_n$ respectively, and by 
Lemma~\ref{prop:LT}
$$
\Gal(\Q_q(p^n)/\Q_q)=\Gal(\Q_q(\nu_n\omega_n)/\Q_q)\simeq\Gal(\Q_q(\nu_n)/\Q_q)\simeq \Z/\delta\Z\times \left(\Z/p^{n-1}\Z\right)^2
$$
which proves (i), and
$$
G_i(\Q_q(p^n)/\Q_q)=G_i(\Q_q(\nu_n\omega_n)/\Q_q)\simeq G_i(\Q_q(\nu_n)/\Q_q)=\Gal(\Q_q(\nu_n)/\Q_q(\nu_j)).
$$
Moreover, again by Lemma \ref{lem:twist} (iii), with $L=\Q_q$ and with $\nu$, $\omega$ replaced by $\nu_i$, $\omega_i$ ($i=j,n$), and by~\eqref{eq:twist}
\begin{align*}
\Gal(\Q_q(\nu_n)/\Q_q(\nu_j))
&=\ker\big(\Gal(\Q_q(\nu_n)/\Q_q)\rightarrow\Gal(\Q_q(\nu_j)/\Q_q)\big)\\
&=\ker\big(\Gal(\Q_q(\nu_n\omega_n)/\Q_q)\rightarrow\Gal(\Q_q(\nu_j\omega_j)/\Q_q)\big)\\
&=\Gal(\Q_q(\nu_n\omega_n)/\Q_q(\nu_j\omega_j))
=\Gal(\Q_q(p^n)/\Q_q(p^j).
\end{align*}
The last two displayed lines prove (ii). 

To prove (iii), we recall that $\nu\omega\colon G_{\Q_{q}}\rightarrow(\Z_q^{\times})^{k-1}$ is surjective by Lemma~\ref{lem:twist} (i). 
Scalar matrices with coefficients in $(\Z_p^\times)^{k-1}$ are central elements, invariant by the algebraic conjugation of $\Q_q/\Q_p$ and thus, by~\eqref{eq:rep}, are in the image of $\rho|_{G_{\QQ_q}}$.
\end{proof}

\section{The normal closure lemma.}
\label{sect:normal-closure}
\begin{proposition}
\label{prop:H.lemma6.2}  
Let $p$ be a rational prime satisfying Assumptions~\ref{ass:main}. The normal closure of $\Gal(\Q_q(p^n)/\Q_q(p^{n-1}))$ in $\Gal(\Q(p^n)/\Q)$ is $\Gal(\Q(p^n)/\Q(p^{n-1}))$.
\end{proposition}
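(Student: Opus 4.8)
The plan is to imitate the structure of the corresponding argument in Habegger's paper, working at the prime $p$ and exploiting the explicit description of the local picture at $q = p^2$ obtained in Section~\ref{sect:crystalline}. Write $L_n = \Q(p^n)$, $L_n^q = \Q_q(p^n)$, $G = \Gal(L_n/\Q)$, and let $H = \Gal(L_n/L_{n-1})$; we must show that the normal closure $\overline{H}$ of $\Gal(L_n^q\cdot L_{n-1}/L_{n-1})$ inside $G$ equals $H$. First I would observe that $H$ is genuinely a subgroup of the local Galois group at a place above $p$: by Proposition~\ref{lem:3.3}(ii) the group $\Gal(L_n^q/L_{n-1}^q)$ is the last nontrivial higher ramification group $G_{i_n}$ of $L_n^q/\Q_q$, hence it is contained in the inertia (indeed wild inertia) subgroup at any place of $L_n$ above $p$ lying over that place of $L_n^q$; and since $\Q_q/\Q_p$ is unramified, the decomposition/inertia data over $p$ and over $q$ match up under Lemma~\ref{lem:HabLemma 2.1}. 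Thus $\overline{H}$ is generated by the $\Gal(L_n/\Q)$-conjugates of $G_{i_n}$ at one place, i.e.\ by the last ramification groups $G_{i_n}$ at \emph{all} places of $L_n$ above $p$.

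Next I would identify what subgroup these conjugates generate, using condition $(P2)$. Since $\overline{H}$ is normal in $G$ and contained in $H$ (because $H$ is itself normal in $G$, being $\Gal(L_n/L_{n-1})$ with $L_{n-1}/\Q$ Galois), it suffices to show $\overline{H} \supseteq H$, equivalently that $(L_n)^{\overline H} \subseteq L_{n-1}$, equivalently that $\rho(p^n)(\overline H)$ has full image $\ker\big(\widehat G(\calo/p^n\calo) \to \widehat G(\calo/p^{n-1}\calo)\big)$ — call this kernel $U_n$. By Proposition~\ref{lem:3.3}(i)--(ii), for $n\ge 2$ the image of a single $G_{i_n}$ under $\rho(p^n)$, composed with the local decomposition at $q$, is the group of diagonal matrices $\mathrm{diag}(1+p^{n-1}a,\,1+p^{n-1}a')$ with $a,a'\in\F_p$ — a two-dimensional $\F_p$-space of diagonal unipotent-type matrices in $U_n$. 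The $G$-conjugates of this by elements realizing (via $(P2)$) all of $\widehat G(\calo/p\calo)$ — in particular by $\GL_2$ over each residue field, modulo the determinant constraint — will move these "infinitesimal" diagonal matrices around to generate the whole of $U_n$. This is a group-theoretic computation about $\GL_2$ of the finite product of local rings $\prod_{v\mid p}\calo_v/\mathfrak P_v^{e_v n}$, exactly the kind of statement the paper has relegated to Appendix~\ref{linear-groups}: one shows that the normal subgroup generated by enough diagonal infinitesimal elements, under conjugation by the image of $\rho(p)$, fills the congruence subgroup $U_n$. The case $n = 1$ is separate and easier: there $G_{i_1} = \Z/\delta\Z$ sits inside the diagonal torus, and one checks its normal closure under $\widehat G(\calo/p\calo)$ is the full $\ker(\widehat G(\calo/p\calo)\to \text{(trivial)})$... actually $L_0 = \Q$, so we need the normal closure of $\Gal(\Q_q(p)/\Q_q)$ to be all of $\Gal(\Q(p)/\Q)$, which again reduces to: the normal closure in $\widehat G(\calo/p\calo)$ of the image of the local group at $q$ is everything — a finite-group statement one dispatches with $(P2)$, $(P3)$ and the structure of $\widehat G(\calo/p\calo)$.

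I expect the main obstacle to be precisely the linear-algebra step: controlling the normal subgroup of $\GL_2\big(\prod_{v\mid p}\calo_v/\mathfrak P_v^{e_v n}\big)$ generated by the local "last ramification group" at $q$ together with the constraint that we may only conjugate by elements whose image mod $p$ lies in $\widehat G(\calo/p\calo)$ (the determinant-power condition from $(P2)$), and checking the generation works uniformly over all the local factors simultaneously. The role of $(P3)$ — $p\ge 5$, $p\nmid k-1$, $\tfrac{p+1}{2}\nmid k-1$ — is to guarantee (via Proposition~\ref{prop:zp} and the absolute irreducibility it provides, plus the non-degeneracy of the relevant characters) that the residual representation at $p$ is large enough for these conjugation orbits to span, and that $\delta = \tfrac{q-1}{(q-1,k-1)} \ne 1$ so $G_{i_n}$ is nontrivial at every level. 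Once the appendix lemma is in hand, the proof is the bookkeeping: translate "normal closure of the local ramification group" through Lemma~\ref{lem:HabLemma 2.1} and Proposition~\ref{lem:3.3} into the congruence-subgroup statement, apply the group theory, and conclude $\overline H = H$.
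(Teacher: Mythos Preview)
Your overall strategy matches the paper's: reduce to a group-theoretic statement about the image of the local ramification subgroup inside $\GL_2(\calo/p^n\calo)$, and invoke the Appendix for the heavy lifting. The containment $\overline{H}\subseteq H$ and the reduction via $(P2)$ to a statement about the image under $\rho(p^n)$ are set up correctly. However, there is one imprecision and one genuine structural gap in the $n\ge 2$ case.

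The imprecision: your description of the image of $G_{i_n}$ as ``diagonal matrices $\mathrm{diag}(1+p^{n-1}a,\,1+p^{n-1}a')$ with $a,a'\in\F_p$'' is only valid after conjugating over $\Z_q$; over $\Z_p$ the representation is \emph{not} diagonal. What the paper actually uses is Proposition~\ref{prop:zp}: after a $\GL_2(\calo_v)$-conjugation the restriction to $G_{\Q_p}$ lands in $\GL_2(\Z_p)$, so the ``log'' $A_\sigma$ of $\rho(p^n)(\sigma)=\I+p^{n-1}A_\sigma$ lies in $\M_2(\F_p)$, a $2$-dimensional $\F_p$-subspace since $|H_n|=p^2$---but not the diagonal one.

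The gap: you propose to ``conjugate by elements realizing $\widehat G(\calo/p\calo)$ and move these infinitesimal matrices around to generate the whole of $U_n$''. But the adjoint action of $\GL_2$ is \emph{trivial on scalar matrices}, so conjugation alone can never produce the scalar direction in $U_n$. The paper's mechanism is to linearize via the log map $\mathcal{L}_n:\Omega_n\to \widehat\M_2(\calo/p\calo)$ (matrices with trace in $\F_p$, forced by $\det\in(\Z/p^n\Z)^\times$) and then split $\widehat\M_2(\calo/p\calo)=\M_2(\calo/p\calo)^0\oplus\F_p\cdot\I$. The trace-zero part is filled by adjoint-irreducibility (Proposition~\ref{prop:irredadj}): $\mathcal{L}_n(H_n)\cap\M_2(\F_p)^0\neq 0$ by a dimension count, and any nonzero $\SL_2(\calo/p\calo)$-stable subspace of $\M_2(\calo/p\calo)^0$ meeting $\M_2(\F_p)^0$ is everything. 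The scalar direction $\F_p\cdot\I$ requires a completely separate argument: the trace of $A_\sigma$ is governed by $\det\rho(p^n)=\e_p^{k-1}\bmod p^n$, and one checks directly that the $(k-1)$-th power of the cyclotomic character is surjective from $H_n$ onto $(1+p^{n-1}\Z_p)/(1+p^n\Z_p)$ via the norm map $\Z_q^\times\to\Z_p^\times$ together with $p\nmid k-1$. Without this second step your conjugation argument stalls on the central direction.

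For $n=1$ your sketch is fine; the paper applies Proposition~\ref{prop:norm} with the cardinality bound $|H_1|=\delta>2s$, which is exactly where $\tfrac{p+1}{2}\nmid k-1$ from $(P3)$ enters.
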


\begin{proof} 
We closely follow the proof of~\cite[Lemma 6.2]{Habegger2013}, using the result in 
Appendix~\ref{linear-groups} instead of ~\cite[Lemma 6.1]{Habegger2013}.

Since $\Gal(\Q_q(p^n)/\Q_q(p^{n-1}))\subseteq \Gal(\Q(p^n)/\Q(p^{n-1}))$, and since the latter is normal, the normal closure of $\Gal(\Q_q(p^n)/\Q_q(p^{n-1}))$ in $\Gal(\Q(p^n)/\Q)$ is contained in $\Gal(\Q(p^n)/\Q(p^{n-1}))$. We want to prove equality.

Let for short\\[0.3cm]
\begin{tikzpicture}[node distance = 1cm, auto]
\node(Hn){$H_n$};
\node(Hndett)[below of=Hn, node distance = 1cm]{$\Gal(\Q_q(p^n)/\Q_q(p^{n-1}))$};
\node(Omegan) [right of=Hn, node distance = 4cm]{$\Omega_n$};
\node(Omegandett)[below of=Omegan, node distance = 1cm]{$\Gal(\Q(p^n)/\Q(p^{n-1}))$};
\node(Gal) [right of=Omegan, node distance = 3cm]{$\Gal(\Q(p^n)/\Q)$};
\node(GL2) [right of=Gal, node distance = 5cm]{$\GL_2(\calo/p^n\calo)$.};
\draw[right hook->] (Hn) to node {} (Omegan);
\draw[right hook->] (Omegan) to node {} (Gal);
\draw[->] (Gal) to node {$\bar\rho_n=\rho(p^n)$} (GL2);
\draw[double equal sign distance] (Hn) to node  {} (Hndett);
\draw[double equal sign distance] (Omegan) to node  {} (Omegandett);
\end{tikzpicture}~\\[0.3cm]

Since $\calo/p\calo$ is a finite product of local $\F_p$-algebras and $p\geq5$, we can freely use the results of the Appendix~\ref{linear-groups} with $\mA=\calo/p\calo$. We also recall that
$$
\widehat{G}(\calo/p\calo)=\{\alpha\in \GL_2(\calo/p\calo) \ |\ \det(\alpha)\in(\F_p^\times)^{k-1}\}.
$$

Assume first $n=1$. 

By Proposition~\ref{prop:zp}, $\bar\rho_1(H_1)$ is a subgroup of $\GL_2(\F_p)$; by Proposition~\ref{lem:3.3} (ii), its cardinality is
$\delta=\frac{q-1}{(q-1,k-1)}$. Let $s=\frac{p-1}{(p-1,k-1)}$. By Assumption~\ref{ass:main} $(P3)$, $\frac{p+1}2\nmid k-1$. Thus $\frac{p+1}{(p+1,k-1)}>2$ and $\delta>2s$. The determinant maps surjectively $\bar\rho_1(H_1)$ onto $(\F_p^\times)^{k-1}=(\F_p^\times)^{(k-1,p-1)}$, because its image is that  of the $(k-1)$-th power $G_{\QQ_p} \to (\FF_p^\times)^{k-1}$ of the cyclotomic character modulo $p$. Thus $\bar\rho_1(H_1)$ satisfies the hypotheses of Proposition~\ref{prop:norm}, so that the normal closure of $\bar\rho_1(H_1)$ in $\widehat{G}(\calo/p\calo)$ is $\widehat{G}(\calo/p\calo)$ itself. Since by the hypothesis~\ref{ass:main} $(P2)$, $\bar\rho_1$ is an isomorphism between $G_1$ and $\widehat{G}(\calo/p\calo)$, the normal closure of $H_1$ in $G_1$ is $G_1$ itself.\\

Now we turn to the case $n\geq 2$. Let $\sigma\in \Omega_n$. As in the proof of~\cite[Lemma 6.2]{Habegger2013}, $\bar\rho_n(\sigma)$ is the reduction mod $p^n$ of a matrix of the shape $\I+p^{n-1}A_\sigma$, where~$\I$ denotes the $2$-by-$2$ identity matrix and where the \lq\lq log\rq\rq{} matrix $A_\sigma\in M_2(\calo)$ is well-defined mod $p\calo$. Moreover, $\det(\bar\rho_n(\sigma))\in (\Z/p^n\Z)^\times$ by Remark~\ref{rem:chebotarev2}. Since $\det(\I+p^{n-1}A_\sigma)\equiv 1+{\rm trace}(A_\sigma)p^{n-1}\!\!\!\mod p^n$, the matrix $A_\sigma$ has trace in $\F_p$. Let $\widehat M_2(\calo/p\calo)$ be the subgroup of $\M_2(\calo/p\calo)$ consisting of matrices having trace in $\F_p$. We denote by $\mathcal{L}_n$ the map $\Omega_n\rightarrow \widehat M_2(\calo/p\calo)$ which sends $\sigma$ to $A_\sigma$. Thus we have the following commutative diagram~:\\[0.3cm]
\hbox{ }\hskip 2cm
\begin{tikzpicture}[node distance = 1cm, auto]
\node(Omegan){$\Omega_n$};
\node(Im) [right of=Omegan, node distance = 4cm]{${\rm Im}(\bar\rho_n)$};
\node(subset) [above of=Im, node distance = 0.5cm]{\rotatebox{90}{$\subseteq$}};
\node(Mpn) [above of=subset, node distance = 0.5cm]{$\I+p^{n-1}M_2(\calo/p^n\calo)$};
\node(Mp) [right of=Im, node distance = 4cm]{$\widehat M_2(\calo/p\calo)$};
\draw[->] (Omegan) to node {$\bar\rho_n$} (Im);
\draw[->] (Im) to node {\lq\lq log\rq\rq} (Mp);
\draw[->, bend left=45] (Omegan) to node {$\mathcal{L}_n$} (Mp);
\end{tikzpicture}~\\[0.3cm]
Since $\bar\rho_n$ and  \lq\lq log\rq\rq{} are both injective, $\mathcal{L}_n$ is injective as well.

The group $\widehat M_2(\calo/p\calo)$ is a direct sum
$$
\widehat M_2(\calo/p\calo)=\M_2(\calo/p\calo)^0\oplus \F_p\cdot\I
$$
where $\M_2(\calo/p\calo)^0$ is the set of trace zero matrices. We also remark that $\mathcal{L}_n(H_n)\subseteq\M_2(\F_p)$, by Proposition \ref{prop:zp}. As in the proof of~\cite[Lemma 6.2]{Habegger2013}, for $\tau\in \Omega_n$ and $\sigma\in\Gal(\Q(p^{n})/\Q)$ one has 
\begin{equation}
\label{coniugio}
\tilde\rho_n(\sigma)\mathcal{L}_n(\tau)\tilde\rho_n(\sigma)^{-1}=\mathcal{L}_n(\sigma\tau\sigma^{-1}),
\end{equation}
where~\\
\hbox{ }\hskip 0.5cm
\begin{tikzpicture}[node distance = 1cm, auto]
\node(Galpn){$\Gal(\Q(p^n)/\Q)$};
\node(Galp) [right of=Galpn, node distance = 4cm]{$\Gal(\Q(p)/\Q)$};
\node(hG) [right of=Galp, node distance = 4cm]{$\widehat{G}(\calo/p\calo)$};
\node(hGdett) [below of=hG, node distance = 1cm]{$\{\alpha\in \GL_2(\calo/p\calo)\ |\
\det(\alpha)\in(\FF_p^\times)^{k-1}\}$,};
\draw[right hook->] (Galpn) to node {\lq\lq res\rq\rq} (Galp);
\draw[->] (Galp) to node {$\bar\rho_1$} (hG);
\draw[->, bend left=20] (Galpn) to node {$\tilde\rho_n$} (hG);
\draw[double equal sign distance] (hG) to node  {} (hGdett);
\end{tikzpicture}~\\
and \lq\lq res\rq\rq{} denotes the restriction. Let $\overline{H_n}$ be the normal closure of $H_n$ in $\Gal(\Q(p^n)/\Q)$. Recall that $\overline{H_n}\subseteq \Omega_n$. We want to show that $\mathcal{L}_n(\overline{H_n})=\widehat{M_2}(\calo/p\calo)$. This will imply that $\overline{H_n}=\Omega_n$ coincides with $\Omega_n$ as we want (and that $\mathcal{L}_n$ is surjective). Note that, by Assumption~\ref{ass:main} $(P2)$ $\bar\rho_1$ is surjective. Since \lq\lq res\rq\rq{} is obviously surjective, $\tilde\rho_n$ is surjective as well. Thus~\eqref{coniugio} says in particular that $\mathcal{L}_n(\overline{H_n})$ is invariant by the adjoint action\footnote{That is, the action by conjugation.} of $\widehat{G}(\calo/p\calo)$.  Note that $\M_2(\calo/p\calo)^0$ is also invariant by this action. Thus 
$$
V=\mathcal{L}_n(\overline{H_n})\cap \M_2(\calo/p\calo)^0
$$ 
is an $\F_p$-subspace of $\M_2(\calo/p\calo)^0$ invariant by the adjoint action of $\widehat{G}(\calo/p\calo)$, and in particular by that of $\SL_2(\calo/p\calo)\subseteq \widehat{G}(\calo/p\calo)$. Moreover 
$$
V\cap \M_2(\F_p)^0=\mathcal{L}_n(\overline{H_n})\cap\M_2(\F_p)^0\neq\{0\}.
$$ 
Indeed, $\mathcal{L}_n(H_n)$ is contained in $\M_2(\F_p)$ by Proposition \ref{prop:zp}; since $\vert H_n\vert=p^2$ by Proposition~\ref{lem:3.3}$(i)$,  its intersection with $\M_2(\F_p)^0$ is already not trivial.
Then, by Proposition~\ref{prop:irredadj}, $V=\M_2(\calo/p\calo)^0$ and thus $\mathcal{L}_n(\overline{H_n})$ contains $\M_2(\calo/p\calo)^0$. 

The cyclotomic character $H_n\to (1+p^{n-1}\ZZ_p)/(1+p^{n}\ZZ_p)$ is surjective. Indeed by Proposition~\ref{prop:crystalline} and Mackey formula
$$
\rho_v|_{G_{\QQ_q}} \simeq  (\varepsilon_2 \oplus \varepsilon'_2)\otimes \mu_{\sqrt{-1}}|_{G_{\QQ_q}}
$$
(see the proof of Proposition~\ref{lem:3.3}), its image coincides with $N_{\QQ_q/\QQ_p}(1+p^{n-1}\ZZ_q)$ mod $1+p^n\ZZ_p$, and the norm map from $1+p^{n-1}\ZZ_q\rightarrow 1+p^{n-1}\ZZ_p$ is surjective (see for example \cite[Ch. V, \S 2, Prop. 3]{Serre1968}).  By Assumption~\ref{ass:main} $(P3)$
$p\nmid k-1$ and thus the $(k-1)$-th power of the cyclotomic character is surjective. Therefore also  the trace from $\mathcal{L}_n(\overline{H_n})$ to $\F_p$ is surjective.

Thus $\mathcal{L}_n(\overline{H_n})=\widehat{M_2}(\calo/p\calo)$ as claimed.
\end{proof}

\begin{remark}
\label{rem:card}
We have seen in the previous proof that $\overline{H_n}=\Omega_n$ can be identified to a subgroup of $\GL_2(\calo/p\calo)$ if $n=1$ and 
to a subgroup of  $\M_2(\calo/p\calo)$ if $n\geq 2$. Thus $\vert {\overline{H_n}}\vert \leq p^{4[\K:\Q]}$, where we recall that $\KK$ is the number field generated by the coefficients of our modular form.
\end{remark}

\section{Proof of the main results.}
\label{sect:proofs}
Proposition~\ref{no-ram} allows us to shortly settle the unramified case. We define
$$
\rho_p=\prod_{v|p}\rho_v:G_\Q\longrightarrow \GL_2(\widehat\calo)
$$
and 
$$
\rho^{(p)}=\prod_{v\nmid p}\rho_v:G_\Q\longrightarrow\GL_2(\widehat\calo).
$$
\begin{proposition}
\label{teo:unramified}
If Assumption~\ref{ass:main} $(P0)$ and $(P1)$ hold, then $\rho^{(p)}$  has property~\B{}.
\end{proposition}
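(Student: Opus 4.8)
The plan is to deduce the statement directly from Proposition~\ref{no-ram}, applied to the (possibly infinite) Galois extension $\F=\Q(\rho^{(p)})$ of $\Q$ cut out by $\rho^{(p)}$. Writing $G=\Gal(\F/\Q)$, which we identify with $\im(\rho^{(p)})\subseteq\prod_{v\nmid p}\GL_2(\calo_v)$, and letting $Z=Z(G)$, it suffices to check the two finiteness hypotheses of that proposition: that the ramification index of $p$ in $\F$ is finite, and that a Frobenius element over $p$ has finite order in $G/Z$.

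First I would observe that $\F$ is in fact unramified at $p$, so that its ramification index there equals $1$. Indeed $\rho^{(p)}=\prod_{v\nmid p}\rho_v$, and for every place $v\nmid p$ of $\K$ the representation $\rho_v$ is unramified at $p$: by~\eqref{car1} it is unramified at each prime $\ell$ with $\ell\nmid\No p_v$, and here $p\nmid\No$ by Assumption~\ref{ass:main}~$(P0)$ while $p\neq p_v$ since $v\nmid p$. Hence each $\Q(\rho_v)$ with $v\nmid p$ is unramified at $p$, and so is their compositum $\F$.

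The second step is to compute the image of a Frobenius over $p$. As $\F$ is unramified at $p$, choose such a Frobenius $\sigma\in G$ (well defined up to conjugacy) and identify it with the tuple $(\rho_v(\Frob_p))_{v\nmid p}$. For each $v\nmid p$ we have $p\nmid\No p_v$, so by~\eqref{car2} the characteristic polynomial of $\rho_v(\Frob_p)$ is $X^2-a_pX+p^{k-1}$, which is $X^2+p^{k-1}$ by Assumption~\ref{ass:main}~$(P1)$. The Cayley--Hamilton theorem then gives $\rho_v(\Frob_p)^2=-p^{k-1}\I$ for every $v\nmid p$ (with $\I$ the $2\times2$ identity matrix), so that $\sigma^2$ is the scalar matrix $-p^{k-1}\I$; this makes sense since $p\in\calo_v^\times$ for $v\nmid p$. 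Being scalar it commutes with $\prod_{v\nmid p}\GL_2(\calo_v)$, hence with $G$, and it belongs to $G$, being $\rho^{(p)}(\Frob_p^2)$. Therefore $\sigma^2\in Z$, so the order of $\sigma$ in $G/Z$ divides $2$. Both hypotheses of Proposition~\ref{no-ram} then hold, and it yields that $\F=\Q(\rho^{(p)})$ has property~\B{}, as desired.

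I do not expect a real obstacle here: this is precisely the \lq\lq unramified case\rq\rq, and it is far easier than the ramified analogue underlying Theorem~\ref{teo:main1}. The only point requiring a little attention is the bookkeeping around the Frobenius conjugacy class --- that it corresponds under $\rho^{(p)}$ to $(\rho_v(\Frob_p))_v$ and that the resulting scalar $-p^{k-1}\I$ genuinely lies in $G$ --- but this is a formal consequence of $\sigma^2=\rho^{(p)}(\Frob_p^2)$ together with the definition of the field cut out by $\rho^{(p)}$.
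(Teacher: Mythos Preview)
Your proposal is correct and follows essentially the same route as the paper's own proof: set $\F=\Q(\rho^{(p)})$, use $(P0)$ with~\eqref{car1} to see that $\F$ is unramified at $p$, use $(P1)$ with~\eqref{car2} to get that $\rho^{(p)}(\Frob_p)$ has characteristic polynomial $X^2+p^{k-1}$, deduce (via Cayley--Hamilton, which you make explicit) that $\Frob_p^2$ is central, and conclude by Proposition~\ref{no-ram}. The only difference is that you spell out the details; the argument is the same.
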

\begin{proof} Let $\F=\QQ(\rho^{(p)})$. By our assumptions and by~\eqref{car1} and~\eqref{car2}, $\F$ is unramified at $p$ and the characteristic polynomial of $\rho^{(p)}(\Frob_p)$ is $X^2+p^{k-1}$. It follows that $\Frob_p^2$ lies in the center of $\Gal(\F/\Q)$. Then Property (B) for $\F$ is ensured by Proposition~\ref{no-ram}.
\end{proof}

\begin{proof}[Proof of Theorem~\ref{teo:main1}]
We want to apply Proposition~\ref{prop:machine} with $\L=\Q(p^\infty)$. We first check Assumption~\ref{ass:H1}.
By Proposition~\ref{lem:3.3} $(iii)$ (with $M=2$) there exists an element $\tau\in\Gal(\QQ_q(p^\infty)/\QQ_q)$ whose image by $\rho_{v}$ is the scalar matrix $\left(\begin{smallmatrix}2^{k-1} &0\\ 0&2^{k-1} \end{smallmatrix}\right)$. Since $\rho_v|_{G_{\QQ_q}}$ is totally ramified and $\QQ_q/\QQ_p$ is unramified, $\tau$ can be lifted to an element in the inertia subgroup of $G_{\QQ_p}$. As its image is a scalar, $\tau$ lies in the center of $\Gal(\L/\Q)$. Its action on the $p^\infty$-th roots of unity is given by the $p$-adic cyclotomic character $\varepsilon$, and $4^{k-1}=\det(\rho_v(\tau))=\varepsilon(\tau)^{k-1}$. Then ${\epsilon(\tau)}=4\omega$, where $\omega$ is a root of unity in $\ZZ_p^\times$. Up to replace $\tau$ by $\tau^{p-1}$ we obtain  an element that meets the required conditions.

We now check Assumption~\ref{ass:H2}. We consider the filtration given by $\L_n=\Q(p^n)$. Let $n\geq 1$. Fix a place of $\Q(p^\infty)$ over $p$ corresponding to an embedding $\Qbar\hookrightarrow \Qbar_p$. Then, by Proposition~\ref{lem:3.3} $(i)$ and $(ii)$, and in the notation of Assumption~\eqref{ass:H2}, $e_n=\delta q^{n-1}
=\frac{(q-1)q^{n-1}}{(q-1,k-1)}$,  $i_n=\frac{q^{n-1}-1}{(q-1,k-1)}$ and 
$H_n\simeq\Gal(\QQ_q(p^n)/\QQ_q(p^{n-1}))$. Thus 
$$
\frac{e_n}{i_n+1}=\frac{(q-1)q^{n-1}}{q^{n-1}-1+(q-1,k-1)}\leq q-1.
$$
By Proposition~\ref{prop:H.lemma6.2}, ${\overline{H_n}}=\Gal(\L_n/\L_{n-1})$, so that $\L_n^{\overline{H_n}}=\L_{n-1}$. 
Moreover, by Remark~\ref{rem:card}, $\vert {\overline{H_n}}\vert \leq p^{4[\K:\Q]}$. 

The  Assumption~\ref{ass:H2} therefore holds with $C_1=p^2$ and $C_2=p^{4[\KK:\QQ]}$.
\end{proof}

\begin{proof}[Proof of Theorem~\ref{teo:main}]
Let again $\F=\QQ(\rho^{(p)})$. Then $\F$ is unramified at $p$, and it has Property \B{} by Proposition~\ref{teo:unramified}. By Theorem \ref{teo:main1}, $\F(\rho_p)$ has Property \B. But
\[
\F(\rho_p)=\QQ(\rho^{(p)}\times \rho_p)=\QQ(\rho).
 \qedhere\]
\end{proof}

 For a fixed $n\geq 0$, and a field $\FF\subseteq\Qbar$, consider the condition
\begin{equation} \label{condizione} \hbox{$\FF$ is Galois, unramified at $p$ and $\Frob_p^{n}$ is central in $\Gal(\FF/\Q)$.}
\end{equation}
It is preserved under  composition of fields. Let $\widetilde \FF_{p,n}$ be the maximal extension satisfying \eqref{condizione}. The field $\widetilde \FF_{p,n}$ has \B, by Proposition \ref{no-ram}, and $\widetilde \FF_{p,n}\subseteq \widetilde \FF_{p,m}$ if $n\mid m$. If $f$ fulfils Assumption \ref{ass:main}, then $\QQ(\rho^{(p)})\subseteq \widetilde\FF_{p,2}$, so that, by Proposition \ref{prop:machine},  Theorem \ref{teo:main} has the following
\begin{corollary}\label{cor:genplessis} 
Let $f=\sum_{n\geq 1} a_nq^n \in S_k(\Gamma_0(\No))$ be a normalized eigenform. Assume that there exists a rational prime $p$ satisfying Assumption~\ref{ass:main}. Then for every $n\in\NN$, the field $\widetilde\FF_{p,n}(\rho_f)$ has property \B.
\end{corollary}
Notice that $\widetilde\FF_{p,1}$ contains $\QQ^{\mathrm{tp}}$, the maximal totally $p$-adic extension of $\QQ$. Therefore, when $f$ is a modular form arising from an elliptic curve defined over $\QQ$, our Corollary \ref{cor:genplessis}  intersects to some extent with Theorem 1.6 of \cite{Plessis2024}.

\appendix
\renewcommand{\thetheorem}{\thesubsection.\arabic{theorem}}
\section{On the linear groups of a finite product of local rings}
\label{linear-groups}
Let $\mA=\prod_{i=1}^r \mA_i$ be a finite product of local commutative $\FF_p$-algebras. In this appendix we bring together some  
results on the linear groups overs $\mA$.\\

\label{appendix}
\subsection{A result on the normal closure}
\label{A1}
Let $p$ be a prime number and $k$ be a positive integer. We define
$$
\widehat{G}(\mA)=\{\gamma\in \GL_2(\mA) \ |\ \det(\gamma)\in (\FF_p^\times)^{k-1}\}.
$$
It is the semidirect product $\SL_2(\mA)\rtimes (\FF_p^\times)^{k-1}$, where $x\in(\FF_p^\times)^{k-1}$ acts on $\SL_2(\mA)$ by conjugation by $\left(\begin{smallmatrix}x & 0\\ 0 & 1 \end{smallmatrix}\right)$. The aim of this subsection is to prove the following proposition.
\begin{proposition}
\label{prop:norm} 
Let us assume $p\geq 5$. Let $s=\frac{p-1}{(k-1,p-1)}$ and let $H$ be a subgroup of $\GL_2(\FF_p)$ such that $|H|>2s$ and such that the determinant maps surjectively onto $(\FF_p^\times)^{k-1}$. Then the normal closure of $H$ in $\widehat{G}(\mA)$ is $\widehat{G}(\mA)$ itself.
\end{proposition}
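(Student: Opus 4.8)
The plan is to reduce the problem to the two "ingredients" one always uses in such arguments: first, that $\SL_2$ of a product of local $\mathbb{F}_p$-algebras is generated by its elementary (transvection) subgroups together with the behaviour of normal closures componentwise, and second, that the hypothesis $|H|>2s$ forces $H$ to contain a nontrivial unipotent element. First I would set $N$ to be the normal closure of $H$ in $\widehat G(\mA)$ and observe that, since $\det$ maps $H$ onto $(\FF_p^\times)^{k-1}$, the image of $N$ under $\det$ is all of $(\FF_p^\times)^{k-1}$; hence it suffices to prove $\SL_2(\mA)\subseteq N$, because then $N\supseteq \SL_2(\mA)$ and $N$ surjects onto $(\FF_p^\times)^{k-1}$, and $\widehat G(\mA)=\SL_2(\mA)\rtimes(\FF_p^\times)^{k-1}$ gives $N=\widehat G(\mA)$.

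Next I would produce a nontrivial unipotent element in $H$. The subgroup $H\cap\SL_2(\FF_p)$ has index dividing $|(\FF_p^\times)^{k-1}|=s$, so $|H\cap\SL_2(\FF_p)|>2$; a subgroup of $\SL_2(\FF_p)$ of order $>2$ either contains an element of order $p$ (a transvection) or is contained in a group whose order is prime to $p$. In the latter case one invokes the classification of subgroups of $\PSL_2(\FF_p)$ (cyclic, dihedral, $A_4$, $S_4$, $A_5$): a $p'$-subgroup of $\SL_2(\FF_p)$ of order $>2$ has a nontrivial element whose conjugates generate all of $\SL_2(\FF_p)$ — alternatively, and more cleanly for the write-up, one shows directly that the normal closure in $\SL_2(\FF_p)$ of any subgroup of order $>2$ is all of $\SL_2(\FF_p)$ when $p\geq 5$ (this uses that $\PSL_2(\FF_p)$ is simple for $p\geq 5$, so the normal closure of any noncentral element is everything, and $|H\cap\SL_2(\FF_p)|>2$ guarantees a noncentral element since the center has order $2$). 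This is the step where the precise bound $|H|>2s$ and the hypothesis $p\geq5$ are really used, and I expect it to be the main obstacle — one must be careful that "noncentral in $\SL_2(\FF_p)$" genuinely follows from $|H\cap\SL_2(\FF_p)|\geq 3$, and that conjugation inside $\SL_2(\FF_p)\subseteq\widehat G(\mA)$ stays inside $N$.

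Having $\SL_2(\FF_p)\subseteq N$, I would then lift to $\SL_2(\mA)$. Writing $\mA=\prod_{i=1}^r\mA_i$ with each $\mA_i$ local with maximal ideal $\frm_i$ and residue field $\FF_p$ (here using that $\mA_i$ is a finite local $\FF_p$-algebra, so its residue field is a finite extension of $\FF_p$; if the residue fields are larger one instead gets $\SL_2$ of the residue field, and the same argument with $\PSL_2$ simple applies), the group $\SL_2(\mA)=\prod_i\SL_2(\mA_i)$, and each $\SL_2(\mA_i)$ is generated by elementary matrices $\I+tE_{12}$, $\I+tE_{21}$ with $t\in\mA_i$. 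Since $N$ already contains the standard transvections $\I+E_{12}$ and their $\SL_2(\FF_p)$-conjugates, and $N$ is normalized by all of $\SL_2(\mA)$, a commutator computation — conjugating $\I+E_{12}$ by diagonal and by elementary matrices with entries in $\mA$, or more simply using that the normal subgroup generated by one transvection in $\SL_2(R)$ for $R$ a ring in which $2$ is a unit contains the full elementary subgroup $E_2(R)$, and $E_2(\mA_i)=\SL_2(\mA_i)$ since $\mA_i$ is local — yields $\SL_2(\mA)\subseteq N$. Assembling the pieces: $N\supseteq\SL_2(\mA)$ and $\det(N)=(\FF_p^\times)^{k-1}$, whence $N=\widehat G(\mA)$, completing the proof.
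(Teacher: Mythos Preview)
Your plan is essentially the paper's own proof: reduce to $\SL_2(\mA)\subseteq N$ via the determinant, get $\SL_2(\FF_p)\subseteq N$ from $|H\cap\SL_2(\FF_p)|>2$ together with the simplicity of $\PSL_2(\FF_p)$ for $p\geq 5$ (this is exactly the paper's Lemma~A.1.4), and then lift to $\SL_2(\mA)$ (the paper's Lemma~A.1.3).

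The one place where your write-up is looser than the paper is the lifting step. The assertion that ``the normal subgroup generated by one transvection in $\SL_2(R)$, for $R$ a ring in which $2$ is a unit, contains $E_2(R)$'' is not a standard theorem in that generality (for instance it is problematic for polynomial rings, where the only units are scalars); what actually makes the argument go through here is the ring-theoretic fact that for a finite product of local $\FF_p$-algebras with $p\geq 5$, the squares of the units generate $\mA$ additively as an $\FF_p$-vector space. The paper isolates this as a separate lemma (Lemma~A.1.2) and then your ``conjugate $S(1)$ by $\mathrm{diag}(\alpha,\alpha^{-1})$'' computation gives $S(\alpha^2)\in N$ for every unit $\alpha$, whence all $S(t)$ and $T(t)$ lie in $N$. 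So your first alternative (the explicit conjugation) is the right one, but it needs this lemma on squares of units to close; the appeal to a black-box normal-generation result for $\SL_2$ should be dropped or restricted to semilocal rings with large residue fields.
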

To start with, we remark that since $\mA$ is a semilocal ring, $\SL_2(\mA)$ is generated by elementary matrices, i.e. matrices of the form
\[ S(\alpha)=\begin{pmatrix} 1 & \alpha\\0 &1\end{pmatrix},\quad\quad  T(\alpha)=\begin{pmatrix} 1 & 0\\\alpha &1\end{pmatrix},\quad\quad \alpha\in\mA, \]
(see \cite[Theorem 4.3.9]{HahnOMeara1989}).
\begin{lemma}
\label{lem:quadratigen}
Let us assume $p\geq 5$. Then $\mA$ is generated as a $\FF_p$-vector space by the squares of all its invertible elements.
\end{lemma}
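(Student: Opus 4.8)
The statement to prove is Lemma~\ref{lem:quadratigen}: for $p \geq 5$, the finite product $\mA = \prod_{i=1}^r \mA_i$ of local commutative $\FF_p$-algebras is generated as an $\FF_p$-vector space by the squares of its invertible elements.

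\textbf{Plan of proof.} Since $\mA = \prod_i \mA_i$ and an element is invertible in $\mA$ exactly when each component is invertible in $\mA_i$, and since the subspace generated by squares of units is compatible with the product decomposition once we know every unit arises as a product-coordinate of a global unit (which it does: pad with $1$'s in the other factors), it suffices to treat the case $r=1$, i.e. $\mA$ a single local $\FF_p$-algebra with maximal ideal $\frm$ and residue field $\kappa = \mA/\frm$, a finite extension of $\FF_p$. First I would reduce to showing two things: (a) the residue field $\kappa$ is spanned over $\FF_p$ by squares of its nonzero elements, and (b) the subspace $W \subseteq \mA$ spanned by squares of units surjects onto $\kappa$ and contains $\frm$ — together these give $W = \mA$ by a dimension/Nakayama-style count.

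For (a): the squares of $\kappa^\times$ form a subgroup of index $2$ in $\kappa^\times$ (as $|\kappa|$ is odd, $p \geq 5$ in particular odd), so there are at least $(|\kappa|-1)/2 \geq 2$ distinct nonzero squares; the key point is that the $\FF_p$-span of $(\kappa^\times)^2$ is a subfield-stable or at least additively large subspace — in fact a classical fact is that for a finite field of odd order $> 3$, the squares additively generate the whole field (if they spanned a proper $\FF_p$-subspace, that subspace would be too small: $(q-1)/2$ nonzero squares cannot all lie in a subspace of size $\leq q/p \leq q/5$ when $q \geq 5$, a simple counting contradiction once one checks the small cases $q = 5, 7, 9$ by hand or by the identity $1 = $ a combination of squares). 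Actually the cleanest route: for $q \geq 5$ odd, every element of $\FF_q$ is a \emph{sum of two squares} (standard pigeonhole: the sets $\{x^2\}$ and $\{a - y^2\}$ each have size $(q+1)/2$ and must intersect), hence certainly lies in the span of squares, giving (a) immediately and with room to spare.

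For (b): given $m \in \frm$, I want to write $m$ in terms of squares of units. Use the identity
$$
4m = (1+m)^2 - (1-m)^2 - 2m^2 + 2\cdot 0,
$$
wait — better: $(1+m)^2 - (1-m)^2 = 4m$, and $1 \pm m$ are units since $m \in \frm$. Since $p \geq 5$, $4$ is invertible in $\FF_p$, so $m = \tfrac14\big((1+m)^2 - (1-m)^2\big)$ lies in the span of squares of units. This handles $\frm$ directly, so $W \supseteq \frm$. Combined with the surjection $W \twoheadrightarrow \kappa$ from lifting part (a) (each nonzero square $\bar u^2 \in \kappa$ lifts to the square $u^2$ of a unit, plus $\frm \subseteq W$ to absorb the error terms), we get $W + \frm = \mA$ and $\frm \subseteq W$, hence $W = \mA$. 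The main obstacle — and the only place genuine care is needed — is the finite-field statement (a), specifically making sure the sum-of-two-squares pigeonhole argument is stated correctly and that the hypothesis $p \geq 5$ (rather than $p \geq 3$) is actually used or noted where needed; for $\FF_3$ itself the squares are $\{0,1\}$ which do span $\FF_3$, so the real constraint $p\geq 5$ is presumably being invoked for the $\tfrac14$ in part (b) and for uniformity with the rest of the appendix, and I would flag that explicitly.
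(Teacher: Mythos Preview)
Your reduction to the local case is where the proof breaks. Padding a unit $u \in \mA_i^\times$ with $1$'s in the other factors shows only that each projection $W \to W_i$ is surjective; it does \emph{not} give $W = \prod_i W_i$, because $\sum_j c_j (u_j, 1, \dots, 1)^2 = (\,\ast\,, \sum_j c_j, \dots, \sum_j c_j)$, never $(\,\ast\,, 0, \dots, 0)$ unless the coefficients happen to sum to zero. This is exactly where the hypothesis $p \geq 5$ lives, contrary to your guess about the $\tfrac14$ (which is invertible for all odd $p$): for $\mA = \FF_3 \times \FF_3$ every square of a unit equals $(1,1)$, so $W = \FF_3 \cdot (1,1)$ is the diagonal and the lemma \emph{fails}. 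The paper handles this by first exhibiting each idempotent $e_i$ in the span $\mB$: pick $a,b \in \FF_p^\times$ with $a^2 \neq b^2$ (possible precisely when $p \geq 5$) and compute $(ae_i + \sum_{j\neq i} e_j)^2 - (be_i + \sum_{j\neq i} e_j)^2 = (a^2-b^2)e_i$. Since $\mB$ is closed under products, once $e_i \in \mB$ the problem genuinely localises.

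After that your local argument is correct and in places tidier than the paper's. You get $\frm \subseteq W$ in one stroke via $m = \tfrac14\big((1+m)^2 - (1-m)^2\big)$; the paper instead first shows every unit $\alpha \in \mA_i^\times$ lies in $\mB_i$ from $(\alpha\pm 1)^2 - \alpha^2 = \pm 2\alpha + 1$, and only then extracts $\pi \in \frm_i$ from $(1+\pi)^2 - 1 = \pi(2+\pi)$ by multiplying by the unit $(2+\pi)^{-1}$ inside the subring. Your pigeonhole for the residue field is also fine. So: insert the idempotent step to justify the reduction, correct the diagnosis of where $p \geq 5$ is used, and the rest stands.
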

\begin{proof}
Let $\mB$ be the $\FF_p$-vector subspace of $\mA$ generated by the squares of all the invertible elements of $\mA$. Note that $\mB$ is a subring with unity of $\mA$ which contains $\FF_p$. For every $i=1,\ldots ,k$ denote by $e_i\in\mA$ the idempotent projecting to the summand $\mA_i$. Firstly we show that $e_i\in\mB$ for every $i$. Indeed, choose two elements  $a$, $b$ in $\FF_p^\times$ such that $a^2\neq b^2$ (which is possible because $\vert\FF_p^\times\vert>2$) and let $\alpha=a e_i+\sum_{j\not=i}e_j$, $\beta=b e_i+\sum_{j\not=i}e_j$. Then $\alpha^2-\beta^2=(a^2-b^2)e_i$ is a non-zero $\FF_p$-multiple of $e_i$ belonging to $\mB$.

Now we prove that every $\mA_i$  coincides with the subring $\mB_i$ generated over $\FF_p$ by  the squares of the invertible elements. Assume that  
$\alpha\in\mA_i^\times$. Since $p\neq 2$ and since $\mA_i$ is local, one of $\alpha\pm 1$  is in $\mA_i^\times$. Then $(\alpha\pm 1)^2-\alpha^2=\pm 2\alpha+1\in \mB_i$, so that $\alpha\in\mB_i$. It follows $\mA_i^\times\subseteq \mB_i$. Now consider an element $\pi$ in the maximal ideal $\frak{m}_i$ of $\mA_i$. 
then $(1+\pi)^2-1=2\pi+\pi^2=\pi(2+\pi)\in\mB_i$. Since $(2+\pi)^{-1}\in \mA_i^\times\subseteq \mB_i$, we deduce $\pi\in\mB$. Then $\mB_i$ is a subring containing all units of $\mA_i$ and $\mathfrak{m}_i$, so that $\mB_i=\mA_i$.\par
 Finally, assume $\alpha\in\mA$ and $\alpha=\sum_{i=1}^s\alpha_i e_i$ with $\alpha_i\in\mA_i$. For $i=1,\ldots, s$ we can now write $\alpha_i=\sum_ja_{i,j}\beta_{ij}^2$, with $a_{i,j}\in\FF_p$ and $\beta_{ij}\in\mA_i^\times$. Put $\tilde\beta_{ij}=\beta_{ij}e_i+\sum_{r\not=i}e_r$; then $\tilde\beta_{ij}\in\mA^\times$ and
$$
\alpha=\sum_{i,j}a_{ij}\tilde\beta_{ij}^2e_i\in\mB.
$$
\end{proof}
\begin{lemma}
\label{lem:normalclosureinA} 
Let us assume $p\geq 5$. Then the normal closure of $\SL_2(\FF_p)$ in $\SL_2(\mA)$ is $\SL_2(\mA)$ itself.
\end{lemma}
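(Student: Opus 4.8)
Write $N$ for the normal closure of $\SL_2(\FF_p)$ in $\SL_2(\mA)$. Since $\mA$ is semilocal, $\SL_2(\mA)$ is generated by the elementary matrices $S(\alpha)$ and $T(\alpha)$, $\alpha\in\mA$, by \cite[Theorem 4.3.9]{HahnOMeara1989}; hence the plan is simply to check that every such elementary matrix lies in $N$. The idea is to start from the single matrix $S(1)\in\SL_2(\FF_p)\subseteq N$, spread it around by conjugation inside $\SL_2(\mA)$, and close up using Lemma~\ref{lem:quadratigen}.

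First I would record the conjugation identity: for $u\in\mA^\times$ the diagonal matrix $d(u)=\left(\begin{smallmatrix}u&0\\0&u^{-1}\end{smallmatrix}\right)$ lies in $\SL_2(\mA)$ and $d(u)\,S(\alpha)\,d(u)^{-1}=S(u^2\alpha)$. Applying this with $\alpha=1$ and using that $N$ is normal in $\SL_2(\mA)$ gives $S(u^2)\in N$ for every unit $u$. Next consider
\[
\mathcal{I}=\{\alpha\in\mA:\ S(\alpha)\in N\}.
\]
From $S(\alpha)S(\beta)=S(\alpha+\beta)$ and the fact that $N$ is a subgroup, $\mathcal{I}$ is an additive subgroup of $(\mA,+)$; since $\mathrm{char}\,\mA=p$, every additive subgroup is an $\FF_p$-subspace, so $\mathcal{I}$ is a subspace. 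It contains $u^2$ for all $u\in\mA^\times$, so Lemma~\ref{lem:quadratigen} forces $\mathcal{I}=\mA$, i.e. $S(\alpha)\in N$ for all $\alpha$.

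Finally, to get the lower unipotents I would conjugate by the Weyl element $w=\left(\begin{smallmatrix}0&-1\\1&0\end{smallmatrix}\right)\in\SL_2(\FF_p)\subseteq N$: a one-line computation gives $w\,S(\alpha)\,w^{-1}=T(-\alpha)$, whence $T(\alpha)\in N$ for all $\alpha$ as well (equivalently, one can rerun the diagonal-conjugation argument with $d(u)\,T(\alpha)\,d(u)^{-1}=T(u^{-2}\alpha)$). Thus $N$ contains all elementary matrices and therefore $N=\SL_2(\mA)$. I do not expect a genuine obstacle here: the two substantive inputs, elementary generation of $\SL_2$ over the semilocal ring $\mA$ and Lemma~\ref{lem:quadratigen}, are both already in hand, and the only point requiring a moment's care is the upgrade of the subgroup $\mathcal{I}$ to an $\FF_p$-subspace, which is where the hypothesis that $\mA$ is an $\FF_p$-algebra enters.
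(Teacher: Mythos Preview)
Your proof is correct and follows essentially the same approach as the paper: both conjugate $S(1)\in N$ by diagonal matrices $d(u)$ to obtain $S(u^2)$, invoke Lemma~\ref{lem:quadratigen} together with the additivity $S(\alpha)S(\beta)=S(\alpha+\beta)$ to get all $S(\alpha)$, and then handle the lower unipotents analogously. The only cosmetic difference is that you pass from $S$ to $T$ via the Weyl element while the paper repeats the diagonal-conjugation argument directly on $T(1)$; your formulation via the $\FF_p$-subspace $\mathcal{I}$ is a slightly cleaner way of packaging the same step.
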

\begin{proof}
We have
\begin{align*}
S(\alpha)S(\beta)=S(\alpha+\beta), \quad S(\alpha)^{-1}=S(-\alpha),\\ 
T(\alpha)T(\beta)=T(\alpha+\beta), \quad T(\alpha)^{-1}=T(-\alpha).
\end{align*}
Let $N$ be the normal closure of $\SL_2(\FF_p)$ in $\SL_2(\mA)$. It suffices to prove that $N$ contains all  matrices $S(\alpha)$ and $T(\alpha)$ with $\alpha$ varying in a set of generators of $\mA$ as a $\FF_p$-vector space.   If $\alpha\in\mA^\times$, then
$$
\begin{pmatrix} \alpha & 0\\ 0 &\alpha^{-1}\end{pmatrix}S(1)\begin{pmatrix} \alpha^{-1} & 0\\ 0 &\alpha\end{pmatrix}=S(\alpha^2),\quad\quad \begin{pmatrix} \alpha & 0\\ 0 &\alpha^{-1}\end{pmatrix}T(1)\begin{pmatrix} \alpha^{-1} & 0\\ 0 &\alpha\end{pmatrix}=T(\alpha^2).
$$
The claim now follows from Lemma \ref{lem:quadratigen}, since $S(1)$, $T(1)$ generate $\SL_2(\FF_p)$.
\end{proof}

\begin{lemma}
\label{lem:normalclosure} 
Let us assume $p\geq 5$. Let $s=\frac{p-1}{(k-1,p-1)}$ and let $H$ be a subgroup of $\widehat G(\FF_p)$ such that $|H|>2s$ and such that the determinant maps surjectively onto $(\FF_p^\times)^{k-1}$. Then the normal closure of $H$ in $\widehat G(\FF_p)$ is $\widehat G(\FF_p)$ itself.
\end{lemma}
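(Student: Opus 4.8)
The plan is to reduce everything to the classification of normal subgroups of $\SL_2(\FF_p)$, using the semidirect-product decomposition $\widehat G(\FF_p)=\SL_2(\FF_p)\rtimes(\FF_p^\times)^{k-1}$ recalled just above. Write $N$ for the normal closure of $H$ in $\widehat G(\FF_p)$. Since $\det$ induces an isomorphism $\widehat G(\FF_p)/\SL_2(\FF_p)\cong(\FF_p^\times)^{k-1}$ and $\det(H)=(\FF_p^\times)^{k-1}$ by hypothesis, once we know $\SL_2(\FF_p)\subseteq N$ we are done: every $g\in\widehat G(\FF_p)$ has the same determinant as some $h\in H\subseteq N$, so $gh^{-1}\in\SL_2(\FF_p)\subseteq N$ and hence $g\in N$. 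So it suffices to prove $N\cap\SL_2(\FF_p)=\SL_2(\FF_p)$.

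First I would bound $H\cap\SL_2(\FF_p)$ from below. It is the kernel of $\det\colon H\to(\FF_p^\times)^{k-1}$, and the target has order $s=\frac{p-1}{(k-1,p-1)}$; since $\det$ is surjective on $H$, we get $|H\cap\SL_2(\FF_p)|=|H|/s>2$. Thus $H\cap\SL_2(\FF_p)$ has order at least $3$, so it cannot be contained in the center $Z=\{\pm I\}$ of $\SL_2(\FF_p)$ and therefore contains some element $\ne\pm I$. Now $N\cap\SL_2(\FF_p)$ is the intersection of two subgroups normal in $\widehat G(\FF_p)$ (namely $N$ and $\SL_2(\FF_p)=\ker\det$), hence it is normal in $\widehat G(\FF_p)$, a fortiori normal in $\SL_2(\FF_p)$; and it contains $H\cap\SL_2(\FF_p)$, hence a non-central element. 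Since $p\ge 5$, $\PSL_2(\FF_p)$ is simple, so the only normal subgroups of $\SL_2(\FF_p)$ are $\{I\}$, $Z$, and $\SL_2(\FF_p)$ itself; being normal, properly larger than $\{I\}$ and not contained in $Z$, the group $N\cap\SL_2(\FF_p)$ must be all of $\SL_2(\FF_p)$. With the reduction of the first paragraph this yields $N=\widehat G(\FF_p)$.

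The argument is essentially formal once the two classical inputs are in place — the $\det$-isomorphism $\widehat G(\FF_p)/\SL_2(\FF_p)\cong(\FF_p^\times)^{k-1}$ and the simplicity of $\PSL_2(\FF_p)$ for $p\ge5$. The only delicate point, and the place where the numerical hypothesis $|H|>2s$ is actually used, is the deduction that $H\cap\SL_2(\FF_p)\not\subseteq\{\pm I\}$: this is exactly what prevents the normal closure from collapsing onto the center. Note that Lemmas~\ref{lem:quadratigen} and~\ref{lem:normalclosureinA} play no role here; they enter only later, in the passage from $\FF_p$ to a general finite product of local $\FF_p$-algebras $\mA$ in Proposition~\ref{prop:norm}.
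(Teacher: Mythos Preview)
Your proof is correct and follows essentially the same route as the paper: bound $|H\cap\SL_2(\FF_p)|=|H|/s>2$ via the surjectivity of $\det$, deduce that $N\cap\SL_2(\FF_p)$ is a normal subgroup of $\SL_2(\FF_p)$ not contained in $\{\pm I\}$, invoke the classification of normal subgroups of $\SL_2(\FF_p)$ for $p\geq5$ to get $\SL_2(\FF_p)\subseteq N$, and conclude by surjectivity of $\det$. The paper's write-up is terser (it bounds $|N\cap\SL_2(\FF_p)|$ directly from $|N|>2s$ and the index $[\widehat G(\FF_p):\SL_2(\FF_p)]=s$), but the substance is identical.
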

\begin{proof} Let $N$ be the normal closure of $H$ in $\widehat G(\FF_p)$. Since $|N|>2s$, the intersection $N\cap \SL_2(\FF_p)$ is a non trivial normal subgroup of $\SL_2(\FF_p)$ of order strictly greater than $2$.  Since $\vert\FF_p\vert\geq4$, the only proper normal subgroup of $SL_2(\FF_p)$ is $\{\pm Id\}$ (see for example \cite[Theorem 8.3 and Lemma 8.6, p.539]{LMFDB2023}). Thus $\SL_2(\FF_p)\subseteq N$. Since the projection over $(\FF_p^\times)^{k-1}$ is surjective, the result follows.
\end{proof}

We can now prove Proposition~\ref{prop:norm}. Let $N$ be the normal closure of $H$ in $\widehat{G}(\mA)$. By Lemma \ref{lem:normalclosure}, $N$ contains $\widehat G(\FF_p)$, so that it contains $\SL_2(\mA)$ by Lemma \ref{lem:normalclosureinA}. The surjectivity of the determinant then shows that $N=\widehat{G}(\mA)$.

\subsection{Irreducibility of the adjoint representation}
\label{A2}
We consider the adjoint representation of $\SL_2(\mA)$, which is realized as its action by conjugation  on the $\FF_p$-vector space $\M_2(\mA)^0$ of $2\times 2$ matrices with coefficients in $\mA$ that have trace zero. We want to prove:

\begin{proposition}
\label{prop:irredadj}
Let us assume $p\geq 3$. Let $V$ be a $\FF_p$-subspace of $\M_2(\mA)^0$ such that $V\cap \M_2(\FF_p)^0\not =\{0\}$ and invariant by the adjoint action of $\SL_2(\mA)$. Then $V=\M_2(\mA)^0$.
\end{proposition}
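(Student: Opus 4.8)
The plan is to reduce the statement to its analogue over the prime field $\FF_p$ and then bootstrap from $\FF_p$-coefficients to $\mA$-coefficients by conjugating with elementary matrices. Write $e=\left(\begin{smallmatrix}0&1\\0&0\end{smallmatrix}\right)$, $f=\left(\begin{smallmatrix}0&0\\1&0\end{smallmatrix}\right)$, $h=\left(\begin{smallmatrix}1&0\\0&-1\end{smallmatrix}\right)$, so that $\M_2(\mA)^0=\mA e\oplus\mA f\oplus\mA h$ as an $\FF_p$-vector space, and recall that for $\alpha\in\mA$ the elementary matrices $S(\alpha)$ and $T(\alpha)$ lie in $\SL_2(\mA)$. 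The first step is to observe that, via the structural embedding $\SL_2(\FF_p)\hookrightarrow\SL_2(\mA)$, the intersection $V\cap\M_2(\FF_p)^0$ is an $\FF_p[\SL_2(\FF_p)]$-submodule of $\M_2(\FF_p)^0$ (both $V$ and $\M_2(\FF_p)^0$ being stable under $\SL_2(\FF_p)$-conjugation), and it is nonzero by hypothesis. Since the adjoint module $\M_2(\FF_p)^0$ is isomorphic to $\mathrm{Sym}^2$ of the natural two-dimensional module, of highest weight $2$, which is a restricted weight for $p\geq 3$, it is irreducible; hence $\M_2(\FF_p)^0\subseteq V$, and in particular $e,f,h\in V$.

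The second step propagates these three elements to full $\mA$-coefficients. A direct $2\times 2$ computation gives $S(\alpha)\,h\,S(\alpha)^{-1}=h-2\alpha e$ and $T(\alpha)\,h\,T(\alpha)^{-1}=h+2\alpha f$ for every $\alpha\in\mA$; since $h\in V$ and $2$ is invertible in $\FF_p$ (again using $p\geq 3$), we get $\alpha e,\alpha f\in V$, that is $\mA e\subseteq V$ and $\mA f\subseteq V$. Finally, for $\alpha,\beta\in\mA$ one computes $T(\beta)\,(\alpha e)\,T(\beta)^{-1}=\alpha e-\alpha\beta\,h-\alpha\beta^2 f$; as $\alpha e$ and $\alpha\beta^2 f$ already lie in $V$, so does $\alpha\beta\,h$, and the choice $\beta=1$ gives $\mA h\subseteq V$. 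Combining the three inclusions, $V\supseteq\mA e+\mA f+\mA h=\M_2(\mA)^0$, and since $V\subseteq\M_2(\mA)^0$ by assumption we conclude $V=\M_2(\mA)^0$.

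The only ingredient that is not a mechanical matrix computation is the $\FF_p$-irreducibility invoked in the first step, which is exactly where $p\geq 3$ is indispensable: when $p=2$ the identity matrix has trace $0$ and is central, so $\FF_2\cdot I$ is a proper nonzero submodule of $\M_2(\FF_2)^0$ and the proposition fails. If one prefers to avoid quoting the irreducibility, the first step can instead be done by hand, showing that the $\SL_2(\FF_p)$-orbit of any nonzero trace-zero matrix spans $\M_2(\FF_p)^0$: one reduces, by $\GL_2(\FF_p)$-conjugacy together with scaling by $\mathrm{diag}(t,1)$, to the cases $X\in\{ce,\,ch\}$ with $c\in\FF_p^\times$ or $X$ non-split semisimple, and in each case applies conjugation by $S(\pm 1),T(\pm 1)$ as in the second step. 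Note that the product-of-local-rings hypothesis on $\mA$ plays no role in this argument — only the specific elementary matrices $S(\alpha),T(\alpha)$ and the diagonal copy of $\SL_2(\FF_p)$ enter.
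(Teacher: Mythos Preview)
Your proof is correct. The first step (irreducibility of the adjoint module over $\FF_p$) coincides with the paper's Lemma~\ref{lem:Kirriducibile}, but the second step is genuinely different. The paper conjugates $B=e$ by the diagonal matrix $\mathrm{diag}(\alpha,\alpha^{-1})$ for $\alpha\in\mA^\times$ to obtain $\alpha^2 e\in V$, and then invokes Lemma~\ref{lem:quadratigen} (squares of units span $\mA$ over $\FF_p$) to conclude $\mA e\subseteq V$; the remaining inclusions follow from the relations~\eqref{*}. You instead conjugate $h$ by the unipotents $S(\alpha),T(\alpha)$, which produces $\alpha e$ and $\alpha f$ \emph{linearly} in $\alpha$ and hence gives $\mA e,\mA f\subseteq V$ directly. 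This buys you two things: first, it needs only that $2$ be invertible, so it honestly covers the case $p=3$ stated in the proposition (the paper's route goes through Lemma~\ref{lem:quadratigen}, whose hypothesis is $p\geq 5$ and which in fact fails for $p=3$, e.g.\ in $\mA=\FF_3\times\FF_3$ every unit squares to $(1,1)$); second, as you observe, your argument uses nothing about $\mA$ beyond its being a commutative $\FF_p$-algebra, whereas the paper's proof leans on the semilocal structure via Lemma~\ref{lem:quadratigen}. The trade-off is minimal: the paper's diagonal trick is slightly shorter once Lemma~\ref{lem:quadratigen} is in hand, but that lemma is itself nontrivial, so overall your approach is the more elementary and more general one.
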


The following results are well known, we report it for convenience of the reader. Observe that $\M_2(\FF_p)^0$ is generated as an $\FF_p$-vector space by the three matrices
$$
A=\begin{pmatrix} 1 & 0\\ 0 & -1\end{pmatrix},\qquad
B=\begin{pmatrix} 0 & 1\\ 0 & 0\end{pmatrix},\qquad 
C=\begin{pmatrix} 0 & 0\\ 1 & 0\end{pmatrix}.
$$
Moreover if we put
$$
X=\begin{pmatrix} 0 & 1\\ -1 & 0\end{pmatrix},\quad
Y=\begin{pmatrix} 1 & 1\\ 0 & 1\end{pmatrix},\qquad 
Z=\begin{pmatrix} 1 & 0\\ 1 & 1\end{pmatrix},
$$
we see that
\begin{equation}
\label{*}
YAY^{-1}-A=-2B,\quad\quad XBX^{-1}=-C,\quad\quad YCY^{-1}-C+B=A.
\end{equation}

\begin{lemma}
\label{lem:Kirriducibile} 
Let us assume $p\geq 3$. Then the adjoint representation of $\SL_2(\FF_p)$ is irreducible.
\end{lemma}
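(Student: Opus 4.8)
The plan is to prove that any nonzero $\SL_2(\FF_p)$-stable $\FF_p$-subspace $W$ of $\M_2(\FF_p)^0$ equals $\M_2(\FF_p)^0$, so that the adjoint representation has no proper nonzero submodule. The relations~\eqref{*} take care of most of the bookkeeping: since $p$ is odd, $A\in W$ forces $B=-\tfrac12\!\left(YAY^{-1}-A\right)\in W$ and then $C=-XBX^{-1}\in W$; conversely, if $B\in W$ or $C\in W$, then the other two of $A,B,C$ lie in $W$ by $XBX^{-1}=-C$ (together with its inverse relation) and $A=YCY^{-1}-C+B$. Since $A,B,C$ span $\M_2(\FF_p)^0$, it is therefore enough to exhibit, inside any nonzero $W$, at least one of the matrices $A$, $B$, $C$.

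To do this I would start from an arbitrary nonzero $M=aA+bB+cC\in W$ and conjugate it by the unipotent elements $I\pm B\in\SL_2(\FF_p)$; note $B^2=0$, hence $(I\pm B)^{-1}=I\mp B$. A short matrix computation gives $BMB=cB$ and $BM-MB=cA-2aB$, so that
\[
(I+B)M(I-B)-M=(BM-MB)-BMB,\qquad (I-B)M(I+B)-M=-(BM-MB)-BMB
\]
both lie in $W$, using that $W$ is a subspace stable under conjugation. Their sum is $-2BMB=-2cB$; since $p\neq2$, if $c\neq0$ we conclude $B\in W$, and we are done by the previous paragraph.

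If instead $c=0$, i.e.\ $M=aA+bB$, I would run the same argument with $I\pm C$ in place of $I\pm B$ (again $C^2=0$): now $CMC=bC$ and $CM-MC=-bA+2aC$, so $(I+C)M(I-C)-M$ and $(I-C)M(I+C)-M$ lie in $W$ and add up to $-2CMC=-2bC$. If $b\neq0$ then $C\in W$; if $b=0$ then $M=aA$ with $a\neq0$, so $A\in W$. In every case $W$ contains one of $A,B,C$, hence all three, hence $W=\M_2(\FF_p)^0$.

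There is no genuine obstacle here: the whole proof reduces to the explicit $2\times2$ identities~\eqref{*} and two unipotent conjugations. The only points that need attention are that $p$ be odd (used each time one divides by $2$, and this is exactly where the hypothesis $p\geq3$ enters) and the routine case split when $c=0$ and then $b=0$.
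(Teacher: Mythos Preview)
Your proof is correct and follows essentially the same approach as the paper's: both first use the relations~\eqref{*} to reduce to showing that any nonzero invariant subspace contains one of $A,B,C$, and then both isolate a scalar multiple of $B$ (resp.\ $C$) by conjugating a general $M$ by the unipotents $I\pm B=Y^{\pm1}$ (resp.\ $I\pm C=Z^{\pm1}$). The only cosmetic difference is that the paper obtains $-2cB$ via the iterated difference $Y(YMY^{-1}-M)Y^{-1}-(YMY^{-1}-M)$, whereas you take the symmetric sum $(YMY^{-1}-M)+(Y^{-1}MY-M)$; both are second-order combinations yielding the same element.
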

\begin{proof}
The argument is standard. 
By~\eqref{*}, any invariant subspace containing one of $A,B,C$ will also contain the others. 
Given some arbitrary (non-zero) matrix $M=aA+bB+cC$ of trace 0,  put
$$U=YMY^{-1}-M,\quad\quad V=ZMZ^{-1}-M. $$
Then
$$YUY^{-1}-U=-2cB,\quad\quad ZVZ^{-1}-V=-2bC.$$
It follows that any invariant $\FF_p$-subspace of $   \M_2(\FF_p)^0$ containing $M$ must also contain one of $A,B,C$. The claim follows.
\end{proof}

We can now prove Proposition~\ref{prop:irredadj}. Firstly we notice that $\M_2(\FF_p)^0\subseteq V$ by Lemma \ref{lem:Kirriducibile}. Since $\M_2(\mA)^0=\M_2(\FF_p)^0\otimes_{\FF_p}\mA$, it suffices to show that $V$ is closed under the multiplication by elements in $\mA$.   Let $\alpha$ be an invertible element of $\mA$ and let $B=\begin{pmatrix} 0&1 \\ 0&0\end{pmatrix}$. We have 
$$\begin{pmatrix}\alpha& 0\\ 0 &\alpha^{-1}\end{pmatrix}B\begin{pmatrix}\alpha^{-1}& 0\\ 0 &\alpha\end{pmatrix}=\alpha^{2}B.$$
Therefore $\alpha^2B\in V$, and by Lemma \ref{lem:quadratigen} we get that $\mA B\subseteq V$. Then~\eqref{*} proves that also $\mA A$ and $\mA C$ lie in $V$, so that $\mA V\subseteq V$.

\normalem
\section*{Acknowledgements}
We are grateful to Andrea Conti, Samuel Le Fourn, Davide Lombardo and Arnaud Plessis for helpful discussions.\par
We thank the INdAM GNSAGA for its support.\par 
We acknowledge financial support from the CNRS IEA (International Emerging Action) project PAriAIPP (Problèmes sur l'Arithmétique et l'Algèbre des Petits Points).

\bibliographystyle{abbrv}

\end{document}